\newtheorem{theoreme}{Theorem}[section]
\newtheorem{proposition}[theoreme]{Proposition}
\newtheorem{lemma}[theoreme]{Lemma}
\newtheorem{corollary}[theoreme]{Corollary}
\theoremstyle{definition}
\newtheorem{remarque}[theoreme]{Remark}
\newcommand {\nc}   {\newcommand}
\nc {\be}   {\begin{equation}} \nc {\ee}   {\end{equation}} \nc
\nc {\eeq}  {\end{eqnarray}} \nc {\beqs}
\nc {\eeqs} {\end{eqnarray*}}
\def\edc{\end{document}}
\numberwithin{equation}{section}
\begin{document}

\author{Nasab Yassine}
\address{Universit\'e de Bretagne Occidentale
\\
LMBA\\
CNRS UMR 6205\\
Institut des sciences et Techniques\\
29238 Brest Cedex 3, France}
\email{nasab.yassine@univ-brest.fr}

\title[Quantitative recurrence of some dynamical systems with an infinite measure in dimension one]{Quantitative recurrence of some dynamical systems with an infinite measure in dimension one}
\begin{abstract}
We are interested in the asymptotic behaviour of the first return time of the orbits of a dynamical system into a small neighbourhood of their starting points. We study this quantity in the context of dynamical systems preserving an infinite measure. More precisely, we consider the case of $\mathbb{Z}$-extensions of subshifts of finite type. We also consider a toy probabilistic model to enlight the strategy of our proofs.
\end{abstract}

\maketitle

\section{Introduction}

The quantitative recurrence properties of dynamical systems preserving a probability measure have been studied by many authors since the work of Hirata \cite{hirata}. Some properties are defined by estimating the first return time of a dynamical system into a small neighbourhood of its starting point. Results in this concern have been described in \cite{saussol}, let us mention works in this situation \cite{abadigalves, saussolrapidly}.
This question has been less investigated in the context of dynamical systems preserving an infinite measure. In \cite{xavier}, Bressaud and Zweim\"{u}ller have established first results of quantitative recurrence for piecewise affine maps of the interval with infinite measure. The case of $\mathbb{Z}^2$-extension of mixing subshifts of finite type has been investigated in \cite{penesaussol}. Results have been also established for random walks on the line \cite{pzs}, for billiards in the plane \cite{penesaussolbilliard} and for null-recurrent Markov maps in \cite{pzs2}. \\
A measure-preserving dynamical system is given by $(X,\mathcal{B},\mu,T)$ where $(X,\mathcal{B})$ is a measurable set, $\mu$ is a finite or $\sigma$-finite positive measure and $T:X \rightarrow X$ is a measurable transformation preserving the measure $\mu$ (i.e. $\mu(T^{-1}A)=\mu(A)$, for every $A\in\mathcal{B})$. We are interested in the case where $\mu$ is $\sigma$-finite. We assume that $X$ is endowed with some metric $d_{X}$ and that $\mathcal{B}$ contains the open balls $B(x,r)$ of $X$. Our interest is in the first time the orbit comes back close to its initial position. For every $y\in X$, we define the first return time $\tau_{\epsilon}$ of the orbit of $y$ in the ball $B(y,\epsilon)$ as:
\begin{equation*}
\tau_{\epsilon}(y):=\inf\{n\geq1: T^{n}(y)\in B(y,\epsilon)\}\in \mathbb{N}\cup\{+\infty\}.
\end{equation*}
We consider conservative dynamical systems, that is dynamical systems for which the conclusion of the poincar\'{e} theorem is satisfied. This ensures that, for every $\epsilon >0$, $\tau_{\epsilon}<\infty$, $\mu$ almost everywhere. The main goal of this article is to study the behavior of $\tau_{\epsilon}$ as $\epsilon\rightarrow 0$. A classical example of dynamical systems preserving an infinite measure is given by $\mathbb{Z}$-extensions of a probability-preseving dynamical system. Given a probability-preserving dynamical system $(\bar{X},\bar{\mathcal{B}},\nu,\bar{T})$ and a measurable function $\varphi:\bar{X}\rightarrow \mathbb{Z}$, we construct the $\mathbb{Z}$-extension $(X,\mathcal{B},\mu,T)$ of $(\bar{X},\bar{\mathcal{B}},\nu,\bar{T})$ by setting $X:=\bar{X}\times\mathbb{Z}$, $\mathcal{B}:=\bar{\mathcal{B}}\otimes\mathcal{P}(\mathbb{Z})$, $\mu:= \nu\otimes\sum_{l\in\mathbb{Z}}\delta_{l}$ and $T(x,l)=(\bar{T}(x),l+\varphi(x))$. We endow $X$ with the product metric given by $d_{X}((x,l),(x^{'}, l^{'})):=\max\{d_{\bar{X}}(x,x^{'}), \mid l-l^{'}\mid\}$. Hence $T^{n}(x,l)=(\bar{T}^{n}x, l+S_{n}\varphi(x))$, where $S_{n}\varphi$ is the ergodic sum $S_{n}\varphi:=\sum_{k=0}^{n-1}\varphi\circ \bar{T}^{k}$. Therefore, for $\epsilon$ small enough,
\begin{equation*}
T^{n}(x,l)\in B((x,l),\epsilon)\Longleftrightarrow\bar{T}^{n}(x)\in B_{\bar{X}}(x,\epsilon)\textit{ and } S_{n}\varphi(x)=0.
\end{equation*} 
Our main results concern the case when $(\bar{X},\bar{\mathcal{B}},\nu,\bar{T})$ is a mixing subshift of finite type (see Section 3 for precise definition), which are classical dynamical systems used to model a wide class of dynamical systems such as geodesic flows in negative curvature, etc.\\
Consider $(\bar{X},\bar{\mathcal{B}},\nu,\bar{T})$ a mixing subshift of finite type and $\nu$ a Gibbs measure associated to a H\"{o}lder continuous potential. Moreover we have a $\nu$-centered H\"{o}lder continuous function $\varphi$. Then we get
\begin{equation}\label{int1}\
\underset{\epsilon\rightarrow0}{\lim}\frac{\log\tau_{\epsilon}}{\log\epsilon}=-2d,
\end{equation}
$\mu$-almost everywhere, where $d$ is the Hausdorff dimension of $\nu$. Moreover the following convergence holds in distribution with respect to any probability measure absolutely continuous with respect to $\mu$:
\begin{equation}\label{int2}
\mu(B(.,\epsilon))\sqrt{\tau_{\epsilon}(.)}\underset{\epsilon\rightarrow0}{\longrightarrow}\frac{\mathcal{E}}{|\mathcal{N}|},
\end{equation}
where $\mathcal{E}$ and $\mathcal{N}$ are two independent random variables with respective exponential distribution of mean 1 and standard normal distribution (see Theorem \ref{theorem1.1} and Theorem \ref{theorem1.2} for precise statements).\\
  Roughly speaking the strategy of our proof is that there is a large scale (corresponding to $S_{n}\varphi(x)$) and a small scale (corresponding to $\bar{T}^{n}(x)$), which behave independently assymptotically.
  To enlight this strategy, we start out this paper with the study of the toy probabilistic model $(Y_{n},S_{n})$, where $(S_{n})_{n}$ is the simple symmetric random walk and $(Y_{n})_{n}$ is a sequence of independent random variables, with uniform distribution on $(0,1)^{d}$ and where $S_{n}$ and $Y_{n}$ are independent. For this simple model, we obtain the same results. More precisely, we prove that \eqref{int1} holds almost surely and that \eqref{int2} holds in distribution. 
\section{ toy probabilistic model}
 Let $d\in\mathbb{N}$. In this section, we give a real random walk $(M_{n})_{n\geq0}$ with values in $\mathbb{R}\times ]0,1[^{d-1}\subset \mathbb{R}^{d}$.
\subsection{Description of the model and statement of the results}
The random process $M_{n}$ is given by $M_{n}=(S_{n},0)+Y_{n}$. $(S_{n})_{n\geq0}$ and $(Y_{n})_{n\geq0}$ are independent such that:
\begin{itemize}
\item $Y_{n}$ is uniformly distributed on $(0,1)^{d}$.
\item $S_{n}$ is the simple symmetric random walk on $\mathbb{Z}$ given by $S_{0}=0$, i.e. $S_{n}=\sum_{k=1}^{n}X_{k}$, where $(X_{k})_{k}$ is a sequence of independent random variables such that: $\mathbb{P}(X_{k}=1)=\mathbb{P}(X_{k}=-1)=1/2$.
\end{itemize}
We want to study the asymptotic behavior, as $\epsilon$ goes to $0$, of $\tau_{\epsilon}$ for the metric associated to some norm on $\mathbb{R}^{d}$. Let $c$ be the Lebesgue measure of the unit ball in $\mathbb{R}^{d}$. We will prove the following:
\begin{theoreme}\label{theorem1.1}
Almost surely, $\frac{\log{\tau_{\epsilon}}}{-\log{\epsilon}}$ converges to $2d$ as $\epsilon$ goes to $0$.
\end{theoreme}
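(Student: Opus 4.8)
The plan is to control $\tau_\epsilon$ from above and below by estimating, for a given threshold $n$, the probability that the orbit has returned to an $\epsilon$-ball by time $n$. The key structural fact is that $M_k \in B(M_0,\epsilon)$ forces two independent events: the small-scale coordinates $Y_k$ (and the fractional parts) must land within $\epsilon$ of $Y_0$, which happens with probability of order $\epsilon^d$ for each fixed $k$; and the random walk $S_k$ must return to (a neighbourhood of) $0$, which by the local limit theorem happens with probability of order $k^{-1/2}$. So heuristically $\mathbb{P}(\tau_\epsilon > n) \approx \prod_{k \le n}(1 - c\,\epsilon^d k^{-1/2})$, which transitions from near $1$ to near $0$ around $n \asymp \epsilon^{-2d}$; this is exactly the $2d$ in the exponent. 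The strategy is to make this precise via Borel–Cantelli along a geometric subsequence $\epsilon_j = \theta^j$.

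First I would prove the upper bound $\limsup_{\epsilon\to 0}\frac{\log\tau_\epsilon}{-\log\epsilon} \le 2d$ almost surely. Fix $\delta > 0$ and set $n_\epsilon = \lceil \epsilon^{-2d-\delta}\rceil$. I want to show $\sum_j \mathbb{P}(\tau_{\epsilon_j} > n_{\epsilon_j}) < \infty$ so that, a.s., $\tau_{\epsilon_j} \le \epsilon_j^{-2d-\delta}$ eventually. The event $\{\tau_\epsilon > n\}$ says no return in the first $n$ steps; to bound its probability I would pick a sparse set of times $k_1 < k_2 < \cdots < k_m$ inside $[1,n]$, spaced so that the increments $S_{k_{i+1}} - S_{k_i}$ and the positions $Y_{k_i}$ are ``sufficiently independent'' (using that $Y_{k}$ are i.i.d. and that the random walk has independent increments), and estimate $\mathbb{P}(\tau_\epsilon > n) \le \mathbb{P}(\text{no }k_i\text{ gives a return})$. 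For each $k_i$ the return probability is at least $c'\epsilon^d \cdot \mathbb{P}(S_{k_i} = 0) \gtrsim \epsilon^d k_i^{-1/2}$ by the local CLT, and summing over a logarithmically-spaced choice of the $k_i$ in $[\epsilon^{-2d}, \epsilon^{-2d-\delta}]$ gives $\sum_i \epsilon^d k_i^{-1/2}$ bounded below by a positive power of $\epsilon^{-\delta}$ times $\log$, hence the complementary probability decays superpolynomially in $\epsilon_j^{-1}$, which is summable. Monotonicity of $\tau_\epsilon$ in $\epsilon$ fills the gaps between the $\epsilon_j$.

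Second I would prove the lower bound $\liminf_{\epsilon\to 0}\frac{\log\tau_\epsilon}{-\log\epsilon} \ge 2d$ almost surely, i.e. a.s. eventually $\tau_\epsilon \ge \epsilon^{-2d+\delta}$. Here I need $\sum_j \mathbb{P}(\tau_{\epsilon_j} \le n_{\epsilon_j}) < \infty$ with $n_\epsilon = \epsilon^{-2d+\delta}$. By a union bound, $\mathbb{P}(\tau_\epsilon \le n) \le \sum_{k=1}^n \mathbb{P}(M_k \in B(M_0,\epsilon)) \le \sum_{k=1}^n C\epsilon^d \mathbb{P}(|S_k| \le 1) \le C'\epsilon^d \sum_{k=1}^n k^{-1/2} \le C'' \epsilon^d n^{1/2} = C'' \epsilon^{\delta/2}$, which along $\epsilon_j = \theta^j$ is summable. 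Borel–Cantelli plus monotonicity again gives the claim, and combining the two bounds yields convergence to $2d$.

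The main obstacle is the upper bound, specifically the independence bookkeeping needed to lower-bound $\mathbb{P}(\tau_\epsilon > n)$: the events ``return at time $k$'' for different $k$ are not independent, since the random walk position $S_k$ is strongly correlated across nearby $k$, and a naive second-moment or inclusion–exclusion estimate is delicate because the local CLT density $\mathbb{P}(S_k=0)\sim \sqrt{2/(\pi k)}$ decays slowly and the relevant $k$ range over two orders of polynomial magnitude. I expect to handle this by conditioning on the random walk path, using that for fixed $S$ the events $\{Y_k \in B(Y_0 - (S_k,0),\epsilon)\}$ become conditionally independent once the $k$'s are distinct (because the $Y_k$ are i.i.d. and independent of $Y_0$ only up to the $k=0$ term, which can be dealt with by further conditioning on $Y_0$), reducing the problem to a quenched estimate $\prod_i (1 - c\epsilon^d \mathbf{1}_{|S_{k_i}|\le 1})$ and then invoking the annealed fact that along a positive-density (in the CLT sense) set of times $k \in [\epsilon^{-2d}, \epsilon^{-2d-\delta}]$ the walk satisfies $|S_k| \le 1$ with enough frequency — a statement that itself follows from the recurrence and local CLT for the simple random walk.
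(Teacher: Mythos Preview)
Your lower bound via the union bound is correct. The upper bound sketch, however, contains a concrete error: over logarithmically spaced $k_i\in[\epsilon^{-2d},\epsilon^{-2d-\delta}]$ there are only $O(\log(1/\epsilon))$ terms, each of size $\epsilon^d k_i^{-1/2}\le\epsilon^{2d}$, so $\sum_i\epsilon^d k_i^{-1/2}=O(\epsilon^{2d}\log(1/\epsilon))\to 0$, and the product $\prod_i(1-\cdots)$ yields no decay at all. Sparsity is in fact counterproductive here: it is the \emph{full} sum $\sum_{k\le n}\epsilon^d\,\mathbb{P}(S_k=0)\sim c\,\epsilon^d\sqrt{n}$ that diverges when $n=\epsilon^{-2d-\delta}$, but, as you note, those events are not independent. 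Your conditioning fallback is the right repair, and once you condition there is no reason to thin the times: given $Y_0$ and the whole walk, $\mathbb{P}(\tau_\epsilon>n\mid Y_0,S)=(1-c\epsilon^d)^{N_n}$ exactly, with $N_n=\#\{1\le k\le n:S_k=0\}$ the local time at $0$. What is then required is not a ``positive density'' of zeros (the density is $\sim k^{-1/2}\to 0$) but rather $N_n\ge n^{1/2-\eta}$ eventually a.s., which you have not supplied.

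The paper packages all of this as the identity $\tau_\epsilon=R_{T_\epsilon}$, where $R_p$ is the $p$-th return time of $(S_n)$ to $0$ and $T_\epsilon:=\min\{l\ge 1:Y_{R_l}\in B(Y_0,\epsilon)\}$. Conditionally on $Y_0$, $T_\epsilon$ is geometric with parameter $c\epsilon^d$ and independent of the walk, so $\log T_\epsilon/(-\log\epsilon)\to d$ a.s.\ by a short Borel--Cantelli argument. Separately, $\log R_n/\log n\to 2$ a.s.: the lower bound comes from $\mathbb{P}(R_n\le n^{2-2\alpha})\le\mathbb{P}(R_1\le n^{2-2\alpha})^n\le\exp(-c\,n^{\alpha})$ via Feller's tail $\mathbb{P}(R_1>s)\sim C/\sqrt{s}$, and the upper bound $R_n=O(n^{2+2\alpha})$ from the strong law of large numbers applied to $(R_i-R_{i-1})^{1/(2+2\alpha)}$. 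Then
\[
\frac{\log\tau_\epsilon}{-\log\epsilon}=\frac{\log R_{T_\epsilon}}{\log T_\epsilon}\cdot\frac{\log T_\epsilon}{-\log\epsilon}\longrightarrow 2\cdot d.
\]
This factorization into ``which visit to $0$ is the successful one'' and ``when does the $p$-th visit occur'' is precisely what your conditioning idea is groping towards, but stated this way it removes the independence bookkeeping you flagged as the main obstacle, and the local-time control you need becomes the transparent statement $R_n=n^{2+o(1)}$.
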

For this constant $c>0$, we have the following result:
\begin{theoreme}\label{theorem1.2}
The sequence of random variables $((c\epsilon^{d})\sqrt{\tau_{\epsilon}})_{\epsilon}$ converges in distribution to $\frac{\mathcal{E}}{\mathcal{N}}$, where $\mathcal{E}$ and $\mathcal{N}$ are two independent random variables, $\mathcal{E}$ having an exponential distribution of mean $1$ and $\mathcal{N}$ having a standard Gaussian distribution.
\end{theoreme}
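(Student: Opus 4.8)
The plan is to exploit that the two components of $M_n$ live on different scales and, in this probabilistic toy model, are \emph{exactly} independent: the large scale is the walk $S_n\in\mathbb{Z}$ and the small scale is the position $Y_n\in(0,1)^d$.

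\emph{Step 1: reduction to returns of the walk.} Since $S_0=0$ we have $M_0=Y_0$ and $M_n-M_0=(S_n,0)+(Y_n-Y_0)$. For Lebesgue-almost every value of $Y_0$ there is $\epsilon_0(Y_0)>0$ such that, for every $\epsilon<\epsilon_0(Y_0)$: first, $B(Y_0,\epsilon)\subset(0,1)^d$; and second, if $S_n\neq0$ then $|S_n+(Y_n)_1-(Y_0)_1|$ is bounded below by the positive constant $\min\{(Y_0)_1,1-(Y_0)_1\}$ (using $(Y_n)_1,(Y_0)_1\in(0,1)$), hence $\|M_n-M_0\|\geq\epsilon$ by equivalence of norms on $\mathbb{R}^d$. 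Consequently, for such $\epsilon$,
\[
\tau_\epsilon=\inf\{n\geq1:\ S_n=0\ \text{and}\ \|Y_n-Y_0\|<\epsilon\}.
\]
Let $0<R_1<R_2<\cdots$ be the successive return times of $(S_n)$ to $0$ and put $K_\epsilon:=\inf\{k\geq1:\ \|Y_{R_k}-Y_0\|<\epsilon\}$, so that $\tau_\epsilon=R_{K_\epsilon}$. Conditionally on $Y_0$ interior and on $(S_n)_n$, the events $\{\|Y_{R_k}-Y_0\|<\epsilon\}$, $k\geq1$, are independent of common probability $c\epsilon^d$; as this value does not depend on $(S_n)_n$, the variable $K_\epsilon$ is geometric of parameter $c\epsilon^d$ and independent of $(S_n)_n$, hence of $(R_k)_k$. (In particular $K_\epsilon<\infty$ a.s.\ by recurrence of $(S_n)$.)

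\emph{Step 2: the two factors.} Write
\[
(c\epsilon^d)\sqrt{\tau_\epsilon}=\bigl(c\epsilon^d K_\epsilon\bigr)\cdot\frac{\sqrt{R_{K_\epsilon}}}{K_\epsilon}.
\]
A geometric variable of parameter $p$, multiplied by $p$, converges in distribution to $\mathcal{E}$ as $p\to0$; hence $c\epsilon^d K_\epsilon\xrightarrow{d}\mathcal{E}$ as $\epsilon\to0$. It remains to understand the second factor, for which I would prove the large-scale statement $\sqrt{R_k}/k\xrightarrow{d}1/|\mathcal{N}|$ as $k\to\infty$.

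\emph{Step 3: large-scale asymptotics.} Let $N_n:=\#\{1\leq m\leq n:\ S_m=0\}$, so that $\{R_k\leq n\}=\{N_n\geq k\}$. The local central limit theorem gives $\mathbb{P}(S_{2m}=0)\sim(\pi m)^{-1/2}$, and the renewal structure of the returns to $0$ (equivalently, Donsker's theorem together with L\'evy's identification of Brownian local time at $0$) yields the classical limit law $N_n/\sqrt{n}\xrightarrow{d}|\mathcal{N}|$; the normalisation is pinned down, for instance, by first moments, since $\mathbb{E}[N_n]=\sum_{m\leq n}\mathbb{P}(S_m=0)\sim\sqrt{2n/\pi}=\sqrt{n}\,\mathbb{E}|\mathcal{N}|$. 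Inverting: for $t>0$, using $\sqrt{\lfloor tk^2\rfloor}\sim\sqrt{t}\,k$,
\[
\mathbb{P}\!\left(\frac{R_k}{k^2}\leq t\right)=\mathbb{P}\bigl(N_{\lfloor tk^2\rfloor}\geq k\bigr)\xrightarrow[k\to\infty]{}\mathbb{P}\bigl(|\mathcal{N}|\geq t^{-1/2}\bigr)=\mathbb{P}\!\left(\frac{1}{\mathcal{N}^2}\leq t\right),
\]
so $R_k/k^2\xrightarrow{d}1/\mathcal{N}^2$ and, taking square roots, $\sqrt{R_k}/k\xrightarrow{d}1/|\mathcal{N}|$.

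\emph{Step 4: combining the scales.} Since $(R_k)_k$ is independent of $K_\epsilon$ and $K_\epsilon\to\infty$ almost surely as $\epsilon\to0$, conditioning on the value of $K_\epsilon$ gives, for all $u,v>0$,
\[
\mathbb{P}\bigl(c\epsilon^d K_\epsilon\leq u,\ \sqrt{R_{K_\epsilon}}/K_\epsilon\leq v\bigr)=\sum_{k\geq1}\mathbb{P}(K_\epsilon=k)\,\mathds{1}_{\{c\epsilon^d k\leq u\}}\,\mathbb{P}\bigl(\sqrt{R_k}/k\leq v\bigr).
\]
Writing $\mathbb{P}(\sqrt{R_k}/k\leq v)=\mathbb{P}(1/|\mathcal{N}|\leq v)+o(1)$ and using that the mass of $K_\epsilon$ escapes to $+\infty$, the right-hand side converges to $\mathbb{P}(\mathcal{E}\leq u)\,\mathbb{P}(1/|\mathcal{N}|\leq v)$; hence $(c\epsilon^d K_\epsilon,\sqrt{R_{K_\epsilon}}/K_\epsilon)\xrightarrow{d}(\mathcal{E},1/|\mathcal{N}|)$ with $\mathcal{E}$ and $\mathcal{N}$ independent. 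The continuous mapping theorem applied to the product then gives $(c\epsilon^d)\sqrt{\tau_\epsilon}\xrightarrow{d}\mathcal{E}/|\mathcal{N}|$; integrating over $Y_0$ (the conditional statements hold for a.e.\ $Y_0$ and the limit does not depend on it) concludes by bounded convergence.

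\emph{The main obstacle.} In this toy model the independence of the two scales is built in, so the only genuine analytic input is Step 3 — the local limit theorem for the number of returns of the simple symmetric random walk, and especially the identification of its scaling limit with $|\mathcal{N}|$ — together with the bookkeeping of Step 4 around the random index $K_\epsilon$. (The harder $\mathbb{Z}$-extension case treated later replaces this exact independence by an asymptotic one.)
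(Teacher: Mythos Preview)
Your proof is correct and follows essentially the same route as the paper: the same decomposition $\tau_\epsilon=R_{K_\epsilon}$ (the paper's $T_\epsilon$), the same geometric-to-exponential limit for $c\epsilon^d K_\epsilon$, and the same joint-convergence bookkeeping exploiting the independence of $K_\epsilon$ and $(R_k)_k$. The only variation is in establishing $R_k/k^2\Rightarrow\mathcal N^{-2}$: the paper computes the Laplace transform directly from $\mathbb E[s^{R_1}]=1-\sqrt{1-s^2}$ and identifies the limit $e^{-\sqrt{2t}}$, whereas you go through the local time $N_n/\sqrt n\Rightarrow|\mathcal N|$ and invert --- both are standard and equally short.
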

\subsection{Proof of the pointwise convergence of the recurrence rate to the dimension}
 $M_{0}$ is in $)0;1(^{d}$, let $\epsilon$ so small that $B(M_{0};\epsilon)$ is contained in $)0,1(^{d}$. Note that $Leb(B(x,\epsilon))= c \epsilon^{d}$.\\
 We define for any $p\geq0$ the $p^{th}$ return time $R_{p}$ of $(M_{n})_{n}$in $)0;1(^{d}$, setting $R_{0}=0$, by induction :
\begin{equation*}
R_{p+1}:=\inf \big\{m>R_{p} : S_{m}=0\big\}.\\
\end{equation*}
We have the relation:
\begin{equation*}
\tau_{\epsilon}=R_{T_{\epsilon}}\text{ with } T_{\epsilon}:=\min\{l\geq{1} : Y_{R_{l}}\in B(Y_{0},\epsilon)\}
\end{equation*}
We will study the asymptotic behavior of the random variables $R_{n}$ and $T_{\epsilon}$ and use the relation between them to prove Theorem \ref{theorem1.1}.
\subsubsection{Study the return of the random variable $R_{n}$.}
\begin{proposition}[Feller \cite{feller}] There exists $C>0$ such that:
\begin{equation}\label{1}
\mathbb{P}(R_{1}>s)\sim \frac{C}{\sqrt{s}}, \quad \mbox{as $s\rightarrow  \infty$}
\end{equation}
\end{proposition}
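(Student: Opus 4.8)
The statement is the classical first-return-time estimate for the simple symmetric random walk on $\mathbb{Z}$: $\mathbb{P}(R_1 > s) \sim C/\sqrt{s}$. The plan is to reduce $\{R_1 > s\}$ to an event about the random walk and then invoke well-known fluctuation identities. First I would observe that $R_1 = \inf\{m \geq 1 : S_m = 0\}$ is the first return time to the origin, so $\mathbb{P}(R_1 > s) = \mathbb{P}(S_1 \neq 0, \dots, S_{\lfloor s \rfloor} \neq 0)$, which by symmetry of the walk and a reflection/parity argument equals $\mathbb{P}(S_1 > 0, \dots, S_{\lfloor s \rfloor} > 0)$ up to a factor of $2$ (the walk must stay on one side of $0$ after the first step). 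The standard ballot-type computation then gives the exact formula $\mathbb{P}(R_1 > 2n) = \binom{2n}{n} 2^{-2n}$, i.e. $\mathbb{P}(R_1 > 2n)$ equals the probability that $S_{2n} = 0$.

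The second step is purely asymptotic: apply Stirling's formula to $\binom{2n}{n} 2^{-2n} = \frac{(2n)!}{(n!)^2} 2^{-2n}$ to obtain $\mathbb{P}(R_1 > 2n) \sim \frac{1}{\sqrt{\pi n}}$ as $n \to \infty$. To pass from the subsequence $s = 2n$ to all real $s \to \infty$, I would use monotonicity of $s \mapsto \mathbb{P}(R_1 > s)$: for $2n \leq s < 2n+2$ we have $\mathbb{P}(R_1 > 2n+2) \leq \mathbb{P}(R_1 > s) \leq \mathbb{P}(R_1 > 2n)$, and since consecutive terms are asymptotically equivalent, $\mathbb{P}(R_1 > s) \sim \frac{1}{\sqrt{\pi n}} \sim \frac{\sqrt 2}{\sqrt{\pi s}}$. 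This identifies $C = \sqrt{2/\pi}$ and yields the claimed equivalence.

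Alternatively, and perhaps more cleanly, one can quote the result directly from Feller's book (as the attribution in the statement already does): the generating function identity $\sum_{n} \mathbb{P}(R_1 = n) z^n = 1 - \sqrt{1 - z^2}$ shows that $R_1$ is in the domain of attraction of a one-sided stable law of index $1/2$, and the tail asymptotics $\mathbb{P}(R_1 > s) \sim C s^{-1/2}$ follow from a Tauberian theorem applied near $z = 1$. Since the statement is explicitly credited to Feller, it is legitimate to simply cite \cite{feller} and record the value of $C$; the self-contained route via the explicit formula $\mathbb{P}(R_1 > 2n) = \binom{2n}{n}2^{-2n}$ plus Stirling is the shortest honest derivation.

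The only mild subtlety — the "main obstacle," such as it is — is the parity issue: $R_1$ takes only even values, so the tail function is piecewise constant on pairs of integers, and one must be slightly careful when interpolating to real $s$. This is handled entirely by the monotonicity sandwich above. Everything else is a direct appeal to Stirling's formula or to the Tauberian machinery in Feller, so no genuine difficulty arises.
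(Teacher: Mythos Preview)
Your proposal is correct, and in fact goes well beyond what the paper does: the paper gives no proof of this proposition at all, simply attributing it to Feller and using it as a black box. Your self-contained derivation via the identity $\mathbb{P}(R_{1}>2n)=\binom{2n}{n}2^{-2n}=\mathbb{P}(S_{2n}=0)$ together with Stirling is the standard one, and the constant $C=\sqrt{2/\pi}$ you obtain is correct. It is worth noting that the paper does, a few lines later (in the lead-up to Proposition~\ref{prop1.1.3}), record the two facts $\mathbb{P}(S_{2n}=0)\sim (\pi n)^{-1/2}$ and $\mathbb{E}[s^{R_{1}}]=1-\sqrt{1-s^{2}}$, so both of your suggested routes---the explicit formula plus Stirling, and the generating-function/Tauberian route---use ingredients that the paper itself invokes elsewhere. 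Your handling of the parity issue by the monotonicity sandwich is fine; nothing is missing.
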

\begin{remarque}
Due to the strong Markov property, the delays $U_{p}:=R_{p}-R_{p-1}$ between successive return times are independent and identically distributed.
\end{remarque}\label{2} 
\begin{lemma}\label{lemmaRn}
Almost surely, $\frac{\log{\sqrt{R_{n}}}}{\log{n}}$ converges to 1 as $n$ goes to $\infty$.
\end{lemma}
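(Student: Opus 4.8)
The plan is to prove the equivalent statement $\frac{\log R_n}{\log n}\to 2$ almost surely, from which the lemma follows since $\log\sqrt{R_n}=\tfrac12\log R_n$. Recall that $R_n=\sum_{p=1}^n U_p$ with the $U_p$ i.i.d. and, by \eqref{1}, satisfying $\mathbb{P}(U_1>s)\sim C/\sqrt s$; in particular one can fix constants $0<c_1\le c_2$ and $s_0\ge1$ with $c_1 s^{-1/2}\le\mathbb{P}(U_1>s)\le c_2 s^{-1/2}$ for $s\ge s_0$, and after enlarging $c_2$ we may assume $\mathbb{P}(U_1>s)\le c_2 s^{-1/2}$ for all $s\ge1$. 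I will also use that $U_p\ge1$ and that $n\mapsto R_n$ is nondecreasing. Fix $\delta>0$.

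For the lower bound, since the $U_p$ are positive, $\{R_n\le x\}\subseteq\bigcap_{p=1}^n\{U_p\le x\}$, hence
\begin{equation*}
\mathbb{P}(R_n\le x)\le\bigl(1-\mathbb{P}(U_1>x)\bigr)^n\le\exp\!\bigl(-n\,\mathbb{P}(U_1>x)\bigr).
\end{equation*}
Taking $x=n^{2-\delta}$ gives, for $n$ large, $n\,\mathbb{P}(U_1>n^{2-\delta})\ge c_1 n^{\delta/2}$, so $\sum_n\mathbb{P}(R_n\le n^{2-\delta})<\infty$. By Borel--Cantelli, almost surely $R_n>n^{2-\delta}$ for all $n$ large enough, whence $\liminf_n\frac{\log R_n}{\log n}\ge2-\delta$.

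For the upper bound I would use a truncation estimate. For $x\ge1$ set $\bar U_p:=\min(U_p,x)$; since $\{R_n>x\}\subseteq\{\exists p\le n:\,U_p>x\}\cup\{\sum_{p\le n}\bar U_p>x\}$, Markov's inequality gives
\begin{equation*}
\mathbb{P}(R_n>x)\le n\,\mathbb{P}(U_1>x)+\mathbb{P}\Bigl(\textstyle\sum_{p=1}^n\bar U_p>x\Bigr)\le n\,\mathbb{P}(U_1>x)+\frac{n\,\mathbb{E}[\bar U_1]}{x}.
\end{equation*}
Because $\mathbb{E}[\bar U_1]=\int_0^x\mathbb{P}(U_1>s)\,ds\le 1+2c_2\sqrt x$, this yields $\mathbb{P}(R_n>x)\le K\,n\,x^{-1/2}$ for some constant $K$ and all $x\ge1$. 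Along the geometric subsequence $n_k:=2^k$, taking $x=n_k^{2+\delta}$ gives $\mathbb{P}(R_{n_k}>n_k^{2+\delta})\le K\,2^{-k\delta/2}$, which is summable; by Borel--Cantelli, almost surely $R_{n_k}\le n_k^{2+\delta}$ for all $k$ large. For $n_k\le n<n_{k+1}$, monotonicity gives $R_n\le R_{n_{k+1}}\le n_{k+1}^{2+\delta}=2^{2+\delta}n_k^{2+\delta}\le 2^{2+\delta}n^{2+\delta}$, so $\limsup_n\frac{\log R_n}{\log n}\le2+\delta$.

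Intersecting the almost sure events above over $\delta=1/j$, $j\ge1$, gives $\frac{\log R_n}{\log n}\to2$ almost surely, which is the claim. The only genuinely technical point is the uniform-in-$n$ tail bound $\mathbb{P}(R_n>x)\lesssim n\,x^{-1/2}$, obtained by truncating at level $x$. The main structural obstacle is that at the critical scale $x\asymp n^2$ the probabilities $\mathbb{P}(R_n>n^{2+\delta})$ decay only polynomially and are not summable, which is why the upper bound must be read off along a geometric subsequence and then filled in by monotonicity; the lower bound, having stretched-exponential decay, needs no such device.
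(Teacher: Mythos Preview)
Your argument is correct. The lower bound is essentially the paper's: both use $\{R_n\le x\}\subseteq\bigcap_{p\le n}\{U_p\le x\}$ together with the tail asymptotics \eqref{1} to get a stretched-exponentially small probability, and then Borel--Cantelli.

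For the upper bound the two routes diverge. The paper avoids tail estimates altogether: fixing $\alpha>0$ and setting $p=2+2\alpha$, it uses the elementary subadditivity $\bigl(\sum_i U_i\bigr)^{1/p}\le\sum_i U_i^{1/p}$ to write
\[
R_n\le n^{2+2\alpha}\Bigl(\tfrac1n\sum_{i=1}^n U_i^{1/(2+2\alpha)}\Bigr)^{2+2\alpha},
\]
and since $\mathbb{E}\bigl[U_1^{1/(2+2\alpha)}\bigr]<\infty$ the strong law of large numbers gives $R_n=O(n^{2+2\alpha})$ almost surely in one stroke, with no subsequence needed. Your approach instead establishes the quantitative tail bound $\mathbb{P}(R_n>x)\le Knx^{-1/2}$ via truncation at level $x$ and Markov's inequality, then applies Borel--Cantelli along $n_k=2^k$ and fills in by monotonicity. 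The paper's device is shorter and sidesteps the non-summability issue you correctly identified at scale $x\asymp n^{2+\delta}$; your argument, on the other hand, yields an explicit uniform tail bound for $R_n$ that could be of independent use.
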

\proof
The proof of the lemma directly holds, once the following inequality is proved:
\begin{equation*}
\forall \alpha\in(0,1),\exists n_{0},\forall n\geq n_{0},\quad n^{1-\alpha}\leq{\sqrt{R_{n}}}\leq{n^{1+\alpha}}
\end{equation*}
Let $\alpha\in(0,1)$, by independence (using Remark \ref{2}), we have:
\begin{equation}\label{eqRn}
\mathbb{P}(\sqrt{R_{n}}\leq{n^{1-\alpha}})\leq\mathbb{P}(\forall{p}\leq{n},\sqrt{R_{p}-R_{p-1}}\leq{n^{1-\alpha}})\\
\leq\mathbb{P}\left(\sqrt{R_{1}}\leq{n^{1-\alpha}}\right)^{n}.
\end{equation}
Due to the asymptotic formula given in Proposition \ref{1}, for $n$ sufficiently large
\begin{equation*}
\mathbb{P}(\sqrt{R_{1}}\leq{n^{1-\alpha}})^{n}\leq\left(1-\frac{C}{2n^{1-\alpha}}\right)^{n}\leq\exp{\left(-C\frac{n^{\alpha}}{2}\right)}.
\end{equation*}
This allows us to get the first inequality of \eqref{eqRn} by using the Borel Cantelli lemma. Again, using proposition~\ref{1}, we have $P\left(R_{1}^{\frac{1}{2+\alpha}}>s \right)\leq\frac{C^{'}}{s^{1+\frac{\alpha}{2}}}$ for some $C^{'}>0$, implying obviously that $\mathbb{E}\left(R_{1}^{\frac{1}{2+2\alpha}}\right)<\infty$.\\
Note that one can see,
\begin{equation*}
R_{n}=\sum_{i=1}^{n}U_{i}\leq n^{2+2\alpha}\left(\frac{1}{n}\sum_{i=1}^{n}U_{i}^{\frac{1}{2+2\alpha}}\right)^{2+2\alpha}.
\end{equation*}
But $\frac{1}{n}\sum_{i=1}^{n}U_{i}^{\frac{1}{2+2\alpha}}$ converges almost surely to $\mathbb{E}\left(R_{1}^{\frac{1}{2+2\alpha}}\right)<\infty$ due to the strong law of large numbers. Hence $R_{n}= O(n^{2+2\alpha})$ almost surely, from which we get the second inequality.
\subsubsection{Study the return of the random variable $T_{\epsilon}$}
In this subsection the asymptotic behavior of the random variable $T_{\epsilon}$ is illustrated in the following lemma.
\begin{lemma}\label{lemmaTs}
Almost surely, $\frac{\log{T_{\epsilon}}}{-\log{\epsilon}} \rightarrow $ $d$  as  $\epsilon \rightarrow 0$.
\end{lemma}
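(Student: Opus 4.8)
The plan is to exploit the independence of $(Y_n)_n$ and $(S_n)_n$ to describe $T_\epsilon$ as a conditionally geometric random variable, and then to combine a discretization/Borel--Cantelli argument with the monotonicity of $\epsilon\mapsto T_\epsilon$, in the same spirit as the proof of Lemma~\ref{lemmaRn}.

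First I would condition on $\mathcal{G}:=\sigma\big((S_n)_{n\ge0},\,Y_0\big)$. Since the simple symmetric random walk is recurrent, the return times $R_1<R_2<\cdots$ are a.s.\ finite and pairwise distinct, are $\mathcal{G}$-measurable, and take values in $\{2,3,\dots\}$; moreover $(Y_n)_n$ is an i.i.d.\ sequence independent of $(S_n)_n$, so conditionally on $\mathcal{G}$ the variables $(Y_{R_l})_{l\ge1}$ are i.i.d.\ uniform on $(0,1)^d$ and independent of $Y_0$. Writing $v_\epsilon:=\mathrm{Leb}\big(B(Y_0,\epsilon)\cap(0,1)^d\big)$, this yields
\[
\mathbb{P}\big(T_\epsilon>l\mid\mathcal{G}\big)=(1-v_\epsilon)^l,\qquad l\ge0,
\]
so that $T_\epsilon<\infty$ a.s.\ and, given $\mathcal{G}$, $T_\epsilon$ is geometric with parameter $v_\epsilon$. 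I would also record two elementary facts: there are constants $0<c_1\le c$ (with $c$ the volume of the unit ball) such that $c_1\epsilon^d\le v_\epsilon\le c\,\epsilon^d$ for all small $\epsilon$, uniformly in the position of $Y_0$ inside the cube; and $\epsilon\mapsto T_\epsilon$ is non-increasing, since $B(Y_0,\epsilon')\subseteq B(Y_0,\epsilon)$ whenever $\epsilon'\le\epsilon$.

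Next, along the geometric scale $\epsilon_k:=e^{-k}$, I would prove the two matching bounds by Borel--Cantelli. For the upper one, fixing $\alpha>0$ and using $(1-v)^m\le e^{-vm}$ with $v_{\epsilon_k}\ge c_1\epsilon_k^d$, one gets $\mathbb{P}\big(T_{\epsilon_k}>\epsilon_k^{-d-\alpha}\big)\le\exp\big(-\tfrac{c_1}{2}\epsilon_k^{-\alpha}\big)$ for $k$ large, which is summable, hence a.s.\ $T_{\epsilon_k}\le\epsilon_k^{-d-\alpha}$ for all large $k$. For the lower one, fixing $\alpha\in(0,d)$ and using $1-(1-v)^m\le vm$ with $v_{\epsilon_k}\le c\epsilon_k^d$, one gets $\mathbb{P}\big(T_{\epsilon_k}\le\epsilon_k^{-d+\alpha}\big)\le c\,\epsilon_k^{\alpha}$, again summable, hence a.s.\ $T_{\epsilon_k}>\epsilon_k^{-d+\alpha}$ for all large $k$. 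I would then interpolate: on the a.s.\ event where both hold for $k\ge k_0$, for small $\epsilon$ with $\epsilon_{k+1}<\epsilon\le\epsilon_k$ one has $\epsilon_k^{-d+\alpha}<T_{\epsilon_k}\le T_\epsilon\le T_{\epsilon_{k+1}}\le\epsilon_{k+1}^{-d-\alpha}$ and $k\le-\log\epsilon<k+1$, whence
\[
\frac{(d-\alpha)k}{k+1}<\frac{\log T_\epsilon}{-\log\epsilon}\le\frac{(d+\alpha)(k+1)}{k}.
\]
Letting $\epsilon\to0$ gives $d-\alpha\le\liminf\le\limsup\le d+\alpha$, and intersecting these a.s.\ events over a sequence $\alpha\downarrow0$ yields $\log T_\epsilon/(-\log\epsilon)\to d$ a.s.

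The step I expect to demand the most care is the reduction in the second paragraph: one must justify that reading the i.i.d.\ sequence $(Y_n)$ along the random but $(S_n)$-measurable times $R_l$ preserves independence and uniformity, and one must control the boundary effect for balls $B(Y_0,\epsilon)$ sticking out of the cube --- which is exactly what the two-sided bound on $v_\epsilon$ handles. Once this conditional geometric description of $T_\epsilon$ is in place, the rest is the routine Borel--Cantelli-plus-monotonicity scheme already used for Lemma~\ref{lemmaRn}.
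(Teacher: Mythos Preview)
Your proof is correct and follows essentially the same route as the paper's: identify $T_\epsilon$ as (conditionally) geometric, bound both tails by $O(\epsilon^\alpha)$, apply Borel--Cantelli along a discrete scale, and interpolate via the monotonicity of $\epsilon\mapsto T_\epsilon$. The only cosmetic differences are that the paper simply restricts to $\epsilon$ small enough that $B(Y_0,\epsilon)\subset(0,1)^d$ (so the parameter is exactly $c\epsilon^d$, avoiding your two-sided bound on $v_\epsilon$), uses Markov's inequality rather than the sharper exponential tail for the upper deviation, and takes the polynomial scale $\epsilon_n=n^{-2/\alpha}$ instead of your geometric $\epsilon_k=e^{-k}$.
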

\begin{proof}
Given $Y_{0}$, let $\epsilon >0$ be such that $B(Y_{0}, \epsilon)\subset(0,1)^{d}$. The random variable $T_{\epsilon}$ has a geometric distribution with parameter $\lambda_{\epsilon}:=c{\epsilon}^{d}$.\\
For any $\alpha>0$, a simple decomposition gives:
\begin{equation*}
\mathbb{P}\left(\bigg|\frac{\log{T_{\epsilon}}}{-\log{\epsilon}}-d\bigg|>\alpha\right)=\mathbb{P}\left(T_{\epsilon}>\epsilon^{-d-\alpha})+\mathbb{P}(T_{\epsilon}<\epsilon^{-d+\alpha}\right).
\end{equation*}
The first term is handled by the Markov inequality:
\begin{equation*}
\mathbb{P}(T_{\epsilon}>\epsilon^{-d-\alpha}\mid Y_{0})\leq \epsilon^{\alpha}\frac{{\epsilon}^{d}}{\lambda_{\epsilon}}=O(\epsilon^{\alpha}).
\end{equation*}
While the second term using the geometric distribution:
\begin{eqnarray*}
\mathbb{P}(T_{\epsilon}<\epsilon^{-d+\alpha})
&=&1-(1-c\epsilon^d)^{\epsilon^{-d+\alpha}}\\
&\leq&1-\exp[\epsilon^{-d+\alpha}\log(1-c\epsilon^d)]\\
&\leq&(-\epsilon)^{-d+\alpha}\log(1-c\epsilon^d)\\
&=&O(\epsilon^{\alpha}).
\end{eqnarray*}
Let us define $\epsilon_{n}:=n^{\frac{-2}{\alpha}}$. Thus $(\epsilon_{n})_{n\geq1}$ is a decreasing sequence of real numbers, and $T_{\epsilon}$ is monotone in $\epsilon$, so that:
\begin{equation*}
\sum_{n\geq1}\mathbb{P}(|\frac{\log{T_{\epsilon_{n}}}}{-\log{\epsilon_{n}}}-d|>\alpha)<+\infty.
\end{equation*}
 According to Borel Cantelli lemma $\frac{\log{T_{\epsilon_{n}}}}{-\log{\epsilon_{n}}} \rightarrow d$ almost surely as $n \rightarrow +\infty$.\\
Hence the proof follows since $\underset{n\rightarrow +\infty}{\lim}\epsilon_{n}=0$ and $\underset{n\rightarrow +\infty}{\lim}\frac{\epsilon_{n}}{\epsilon_{n+1}}=1$.
\end{proof}
\emph{\textbf{Proof of Theorem \ref{theorem1.1}}} 
The theorem follows from the two previous lemmas \ref{lemmaRn} and \ref{lemmaTs}, since:
\begin{equation*}
\frac{\log{\sqrt{\tau_{\epsilon}}}}{-\log{\epsilon}}=\frac{\log{\sqrt{R_{T_{\epsilon}}}}}{\log{T_{\epsilon}}}\frac{\log{T_{\epsilon}}}{-\log{\epsilon}}\rightarrow 1\times d=d \quad \text{a.s.}
\end{equation*}
Hence, we get: 
\begin{equation*}
\frac{\log{\tau_{\epsilon}}}{-\log{\epsilon}} \rightarrow 2d \text{ as } \epsilon \rightarrow 0 \quad \text{a.s.}
\end{equation*}

\subsection{Proof of the convergence in distribution of the rescaled return time.}
\begin{proposition}\label{prop1.1.3}
The sequence of random variables $(\frac{R_{n}}{n^{2}})_{n}$ converges in distribution to $\mathcal{N}^{-2}$ where $\mathcal{N}$ is a standard Gaussian random variable.
\end{proposition}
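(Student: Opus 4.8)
The plan is to pass to the limit in the Laplace transform of $R_n/n^2$, using the decomposition $R_n=\sum_{i=1}^n U_i$ into i.i.d.\ summands (Remark~\ref{2}) together with the tail estimate~\eqref{1}. First I would turn that tail estimate into a Laplace–transform asymptotic for a single delay near the origin. By the elementary identity $1-\mathbb{E}[e^{-\lambda U_1}]=\lambda\int_0^{\infty}e^{-\lambda s}\,\mathbb{P}(U_1>s)\,ds$ (valid for $\lambda>0$), inserting $\mathbb{P}(U_1>s)=(C+o(1))s^{-1/2}$, splitting the integral at $s=A/\lambda$, letting $\lambda\to0^{+}$ then $A\to\infty$, and using $\int_0^\infty e^{-u}u^{-1/2}\,du=\sqrt{\pi}$, one gets $1-\mathbb{E}[e^{-\lambda U_1}]\sim C\sqrt{\pi}\,\sqrt{\lambda}$. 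Moreover the constant is explicit for the simple symmetric walk: $\mathbb{P}(R_1>2n)=\mathbb{P}(S_{2n}=0)=\binom{2n}{n}2^{-2n}\sim(\pi n)^{-1/2}$, whence $C=\sqrt{2/\pi}$ and $1-\mathbb{E}[e^{-\lambda U_1}]\sim\sqrt{2\lambda}$.

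By independence of the $U_i$ it then follows that
\begin{equation*}
\mathbb{E}\big[e^{-\lambda R_n/n^{2}}\big]=\Big(\mathbb{E}\big[e^{-(\lambda/n^{2})U_1}\big]\Big)^{n}=\Big(1-\frac{\sqrt{2\lambda}}{n}+o\big(\tfrac1n\big)\Big)^{n}\xrightarrow[n\to\infty]{}e^{-\sqrt{2\lambda}},\qquad\lambda>0 .
\end{equation*}
It remains to identify the limit: if $\mathcal{N}$ is standard Gaussian, the reflection principle gives $\mathbb{P}(\mathcal{N}^{-2}\leq t)=\mathbb{P}(\mathcal{N}^2\geq 1/t)=2\,\mathbb{P}(\mathcal{N}\geq 1/\sqrt t)$, so $\mathcal{N}^{-2}$ has the law of the first passage time to level $1$ of a standard Brownian motion, whose Laplace transform is $\lambda\mapsto e^{-\sqrt{2\lambda}}$. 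Since Laplace transforms are continuous at $0$ with value $1$ and determine laws on $[0,\infty)$, the continuity theorem for Laplace transforms yields the convergence in distribution $R_n/n^2\Rightarrow\mathcal{N}^{-2}$.

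The step I expect to require real care is the Abelian computation converting the tail estimate into the Laplace–transform asymptotic with the correct constant; the remainder is bookkeeping. A more probabilistic variant avoids Laplace transforms altogether: from the duality $\{R_n\leq m\}=\{N_m\geq n\}$ with $N_m:=\#\{1\leq k\leq m:\ S_k=0\}$, Donsker's invariance principle together with the convergence of $N_m/\sqrt m$ to the Brownian local time at $0$ (whose law at time $1$ is $|\mathcal{N}|$) gives $\mathbb{P}(R_n/n^2\leq x)=\mathbb{P}(N_{\lfloor xn^2\rfloor}\geq n)\to\mathbb{P}(\sqrt{x}\,|\mathcal{N}|\geq 1)=\mathbb{P}(\mathcal{N}^{-2}\leq x)$; there the delicate point becomes checking that the normalising constant in $N_m/\sqrt m\Rightarrow|\mathcal{N}|$ is exactly $1$, which follows from $\mathbb{E}[N_m]\sim\sqrt{2m/\pi}=\mathbb{E}[|\mathcal{N}|]\sqrt m$.
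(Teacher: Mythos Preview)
Your proof is correct and follows the same overall strategy as the paper: compute the Laplace transform of $R_n/n^2$ via the i.i.d.\ decomposition $R_n=\sum_{i=1}^n U_i$, show it converges to $e^{-\sqrt{2\lambda}}$, and identify this as the Laplace transform of $\mathcal{N}^{-2}$.

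The one substantive difference is in how the single-increment asymptotic is obtained. The paper does not go through the tail estimate at all: it uses the exact generating-function identity $\sum_{n\ge 0}\mathbb{P}(S_{2n}=0)s^{2n}=(1-s^2)^{-1/2}$ together with the first-return decomposition to derive the \emph{exact} formula $\mathbb{E}[s^{R_1}]=1-\sqrt{1-s^2}$, and then computes directly $\mathbb{E}[e^{-tR_n/n^2}]=\bigl(1-\sqrt{1-e^{-2t/n^2}}\bigr)^n\to e^{-\sqrt{2t}}$. Your route instead passes through the Abelian step $1-\mathbb{E}[e^{-\lambda U_1}]\sim C\sqrt{\pi}\sqrt{\lambda}$ from the tail $\mathbb{P}(R_1>s)\sim C/\sqrt{s}$, then pins down $C=\sqrt{2/\pi}$. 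The paper's argument is shorter and avoids the Abelian step you flagged as requiring care, at the price of being specific to the simple symmetric walk; your argument works for any i.i.d.\ return-time sequence with the same tail exponent and constant, which is a mild gain in generality. Your alternative local-time argument via Donsker is also valid but is further from the paper's approach.
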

The proof of this proposition follows from the two following successive lemmas; the proof of which is straightforward and is omitted.
\begin{lemma}
$\sum_{n\geq0}\mathbb{P}(S_{2n}=0)s^{2n}=\frac{1}{\sqrt{1-s^{2}}}$ and $\mathbb{P}(S_{2n}=0)~\frac{1}{\sqrt{\pi n}}$.
\end{lemma}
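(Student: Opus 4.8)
The plan is to start from the explicit combinatorial formula for the return probability of the simple symmetric random walk. Since $S_{2n}=0$ exactly when the $2n$ steps $X_{1},\dots,X_{2n}$ contain as many $+1$'s as $-1$'s, and $S_{m}$ is odd whenever $m$ is odd, we have
\begin{equation*}
\mathbb{P}(S_{2n}=0)=\binom{2n}{n}2^{-2n}=\binom{2n}{n}\frac{1}{4^{n}},\qquad \mathbb{P}(S_{2n+1}=0)=0,
\end{equation*}
which already explains why only even indices occur in the series.

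For the generating function identity I would invoke Newton's generalized binomial series: for $|u|<1$,
\begin{equation*}
(1-u)^{-1/2}=\sum_{n\geq0}\binom{-1/2}{n}(-u)^{n},
\end{equation*}
together with the elementary identity $\binom{-1/2}{n}(-1)^{n}=\binom{2n}{n}4^{-n}$ (immediate by induction on $n$, or by simplifying the product $\tfrac{1}{2}\cdot\tfrac{3}{2}\cdots\tfrac{2n-1}{2}$). Substituting $u=s^{2}$, legitimate for $|s|<1$, gives
\begin{equation*}
\sum_{n\geq0}\mathbb{P}(S_{2n}=0)\,s^{2n}=\sum_{n\geq0}\binom{2n}{n}\Big(\frac{s^{2}}{4}\Big)^{n}=\frac{1}{\sqrt{1-s^{2}}}.
\end{equation*}
For the asymptotic equivalence I would simply feed Stirling's formula $m!\sim\sqrt{2\pi m}\,(m/e)^{m}$ into the binomial coefficient:
\begin{equation*}
\binom{2n}{n}=\frac{(2n)!}{(n!)^{2}}\sim\frac{\sqrt{4\pi n}\,(2n/e)^{2n}}{2\pi n\,(n/e)^{2n}}=\frac{4^{n}}{\sqrt{\pi n}},
\end{equation*}
whence $\mathbb{P}(S_{2n}=0)=\binom{2n}{n}4^{-n}\sim(\pi n)^{-1/2}$.

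There is no genuine obstacle here; the only point deserving a line of care is the identity $\binom{-1/2}{n}(-1)^{n}=\binom{2n}{n}4^{-n}$. One can bypass it entirely by observing that $f(s):=\sum_{n\geq0}\binom{2n}{n}(s^{2}/4)^{n}$ solves the ODE $(1-s^{2})f'(s)=sf(s)$ with $f(0)=1$, whose unique solution is $(1-s^{2})^{-1/2}$; or, alternatively, by deriving the identity from the renewal (first-passage) decomposition of the walk, namely $\sum_{n\geq0}\mathbb{P}(S_{2n}=0)s^{2n}=\bigl(1-\sum_{n\geq1}\mathbb{P}(R_{1}=2n)s^{2n}\bigr)^{-1}$ combined with the classical formula $\sum_{n\geq1}\mathbb{P}(R_{1}=2n)s^{2n}=1-\sqrt{1-s^{2}}$. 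Any one of these three routes closes the argument; I would present the binomial-series one as the shortest.
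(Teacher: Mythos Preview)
Your argument is correct and entirely standard: the combinatorial formula $\mathbb{P}(S_{2n}=0)=\binom{2n}{n}4^{-n}$, the generalized binomial series for $(1-s^{2})^{-1/2}$, and Stirling's approximation are exactly the ingredients one expects, and your alternative routes (ODE or renewal decomposition) are also valid.

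The paper itself does not give a proof of this lemma at all; it simply states that the proofs of the two successive lemmas are ``straightforward and omitted.'' So there is no paper proof to compare against. Your write-up fills in precisely the details the paper elides, and any of your three closing arguments for the key coefficient identity would be acceptable.
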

Note that $\mathbb{P}(S_{2n}=0)=\sum_{k=0}\mathbb{P}(S_{k}=0)\mathbb{P}(R_{1}=2n-2k)$.
Hence, $\sum_{n>1}\mathbb{P}(S_{2n}=0)s^{2n}=\left(\sum_{n\geq0}\mathbb{P}(S_{2n}=0)s^{2n}\right)\mathbb{E}(s^{R_{1}})$. And so $\mathbb{E}\left[s^{R_{1}}\right]=1-\sqrt{1-s^{2}}$.
\begin{lemma}\label{lemma moment}
The moment generating function of $\mathcal{N}^{-2}$ is $\mathbb{E}\left[e^{-t\mathcal{N}^{-2}}\right]=e^{-\sqrt{2t}}$, $\forall t\geq0$, where $\mathcal{N}$ is standard Gaussian random variable.
\end{lemma}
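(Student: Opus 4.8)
The plan is to compute $\mathbb{E}[e^{-t\mathcal{N}^{-2}}]$ directly by integrating against the standard Gaussian density. For $t=0$ both sides equal $1$, so I fix $t>0$. Using the symmetry $x\mapsto -x$,
\[
\mathbb{E}\bigl[e^{-t\mathcal{N}^{-2}}\bigr]=\frac{1}{\sqrt{2\pi}}\int_{\mathbb{R}}e^{-t/x^{2}}e^{-x^{2}/2}\,dx=\frac{2}{\sqrt{2\pi}}\int_{0}^{\infty}e^{-x^{2}/2-t/x^{2}}\,dx .
\]
The algebraic heart of the matter is the identity $\frac{x^{2}}{2}+\frac{t}{x^{2}}=\bigl(\frac{x}{\sqrt2}-\frac{\sqrt t}{x}\bigr)^{2}+\sqrt{2t}$, which pulls the factor $e^{-\sqrt{2t}}$ out of the integral and leaves the task of evaluating $J:=\int_{0}^{\infty}\exp\bigl[-\bigl(\frac{x}{\sqrt2}-\frac{\sqrt t}{x}\bigr)^{2}\bigr]\,dx$.

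To evaluate $J$ I would use the classical symmetrisation trick. The substitution $x\mapsto \sqrt{2t}/x$ leaves the integrand of $J$ unchanged and turns $dx$ into $\sqrt{2t}\,x^{-2}\,dx$, so that averaging the two resulting expressions for $J$ gives
\[
2J=\int_{0}^{\infty}\exp\Bigl[-\Bigl(\tfrac{x}{\sqrt2}-\tfrac{\sqrt t}{x}\Bigr)^{2}\Bigr]\Bigl(1+\tfrac{\sqrt{2t}}{x^{2}}\Bigr)\,dx=\sqrt2\int_{0}^{\infty}\exp\Bigl[-\Bigl(\tfrac{x}{\sqrt2}-\tfrac{\sqrt t}{x}\Bigr)^{2}\Bigr]\Bigl(\tfrac{1}{\sqrt2}+\tfrac{\sqrt t}{x^{2}}\Bigr)\,dx .
\]
Since $u=\frac{x}{\sqrt2}-\frac{\sqrt t}{x}$ is a strictly increasing bijection of $(0,\infty)$ onto $\mathbb{R}$ with $du=\bigl(\frac{1}{\sqrt2}+\frac{\sqrt t}{x^{2}}\bigr)\,dx$, the remaining integral equals $\int_{\mathbb{R}}e^{-u^{2}}\,du=\sqrt\pi$, hence $2J=\sqrt{2\pi}$ and $J=\frac12\sqrt{2\pi}$. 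Putting everything together yields $\mathbb{E}[e^{-t\mathcal{N}^{-2}}]=\frac{2}{\sqrt{2\pi}}\,e^{-\sqrt{2t}}\cdot\frac{\sqrt{2\pi}}{2}=e^{-\sqrt{2t}}$, which is the claim.

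There is no genuine obstacle; the only point deserving a word of care is the change of variables used for $J$. One must check that every integral above converges absolutely — the integrand is bounded, decays like $e^{-x^{2}/2}$ as $x\to\infty$ and is $O(e^{-t/x^{2}})$ as $x\to0^{+}$ — and that $u(x)$ is indeed monotone and onto, which is immediate from $du/dx>0$ and the limits at $0$ and $\infty$. If one prefers to avoid the trick, two alternatives are available: recognise $\mathcal{N}^{-2}$ as the one-sided $\tfrac12$-stable (Lévy) variable, whose density $\frac{1}{\sqrt{2\pi}}\,x^{-3/2}e^{-1/(2x)}$ on $(0,\infty)$ is obtained by differentiating $\mathbb{P}(\mathcal{N}^{-2}\le x)=\mathbb{P}(|\mathcal{N}|\ge x^{-1/2})$, and quote its Laplace transform; or differentiate $g(t):=\mathbb{E}[e^{-t\mathcal{N}^{-2}}]$ under the integral sign and apply the same substitution $x\mapsto\sqrt{2t}/x$ to obtain the differential equation $g'(t)=-\frac{1}{\sqrt{2t}}\,g(t)$, which integrates to $e^{-\sqrt{2t}}$ using $g(0)=1$. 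I would present the direct computation, as it is the shortest and entirely self-contained.
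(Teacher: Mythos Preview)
Your proof is correct and self-contained; the paper itself omits the proof of this lemma, stating only that it is ``straightforward.'' Your direct computation via the completion of the square and the Glasser--Cauchy symmetrisation $x\mapsto\sqrt{2t}/x$ is the standard way to establish this Laplace transform, and the alternatives you mention (recognising the one-sided stable law, or the ODE argument) are equally valid shortcuts.
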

\begin{proof}[Proof of Proposition \ref{prop1.1.3}.] Knowing that $R_{1}, (R_{2}-R_{1}), ..., (R_{n}-R_{n-1})$ are i.i.d., and the fact that $\mathbb{E}\left[s^{R_{1}}\right]=1-\sqrt{1-s^{2}}$, we get:
\begin{equation*}
\mathbb{E}[e^{-\frac{t}{n^{2}}R_{n}}]=\left(\mathbb{E}[e^{-\frac{t}{n^{2}}R_{1}}]\right)^{n}=\left[1-\sqrt{1-e^{-2\frac{t}{n^{2}}}}\right]^{n}
\end{equation*}
and from Lemma \ref{lemma moment}, we have:
\begin{equation*}
\forall t\geq0, \quad \lim_{n\rightarrow\infty}\mathbb{E}\left[e^{-\frac{t}{n^{2}}R_{n}}\right]=e^{-\sqrt{2t}}=\mathbb{E}[e^{-t\mathcal{N}^{-2}}].
\end{equation*}
Hence, $(\frac{R_{n}}{n^{2}})_{n}$ converges in distribution to $\mathcal{N}^{-2}$.
\end{proof}
\begin{lemma}\label{lemma2}
$(\lambda_{\epsilon}T_{\epsilon})_{\epsilon}$ converges in distribution to an exponential random variable $\mathcal{E}$ of mean 1. 
\end{lemma}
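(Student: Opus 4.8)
The plan is to exploit the fact, already recorded in the proof of Lemma \ref{lemmaTs}, that conditionally on $Y_0$ the variable $T_\epsilon$ is geometrically distributed with parameter $\lambda_\epsilon=c\epsilon^d$, as soon as $\epsilon$ is small enough that $B(Y_0,\epsilon)\subset(0,1)^d$. Indeed, given $Y_0$, the events $\{Y_{R_l}\in B(Y_0,\epsilon)\}$, $l\ge 1$, are independent and each has probability $\mathrm{Leb}(B(Y_0,\epsilon))=\lambda_\epsilon$, because the sequence $(Y_n)_n$ is i.i.d.\ and independent of the return times $(R_l)_l$; hence $T_\epsilon$, the first index $l$ for which this event occurs, is geometric on $\{1,2,\dots\}$ with parameter $\lambda_\epsilon$. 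Since a geometric law with vanishing parameter, rescaled by that parameter, converges to an exponential of mean $1$, the lemma should follow by a direct computation and a removal of the conditioning.

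Concretely, first I would record the conditional tail: on the event $\{B(Y_0,\epsilon)\subset(0,1)^d\}$, for every $x\ge 0$,
\begin{equation*}
\mathbb{P}\big(\lambda_\epsilon T_\epsilon>x\mid Y_0\big)=\mathbb{P}\big(T_\epsilon>x/\lambda_\epsilon\mid Y_0\big)=(1-\lambda_\epsilon)^{\lfloor x/\lambda_\epsilon\rfloor}.
\end{equation*}
As $\epsilon\to 0$ we have $\lambda_\epsilon\to 0$, so that
\begin{equation*}
\lfloor x/\lambda_\epsilon\rfloor\log(1-\lambda_\epsilon)=\lfloor x/\lambda_\epsilon\rfloor\big(-\lambda_\epsilon+O(\lambda_\epsilon^2)\big)\longrightarrow -x,
\end{equation*}
whence $\mathbb{P}(\lambda_\epsilon T_\epsilon>x\mid Y_0)\to e^{-x}$ on that event.

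It then remains to remove the conditioning. For Lebesgue-almost every value $y$ of $Y_0$ one has $y\in(0,1)^d$, hence $B(y,\epsilon)\subset(0,1)^d$ once $\epsilon<\mathrm{dist}(y,\partial(0,1)^d)$; so for almost every $y$ the quantity $\mathbb{P}(\lambda_\epsilon T_\epsilon>x\mid Y_0=y)$ converges to $e^{-x}$, and since it is bounded by $1$, dominated convergence gives $\mathbb{P}(\lambda_\epsilon T_\epsilon>x)\to e^{-x}=\mathbb{P}(\mathcal{E}>x)$ for every $x>0$. As $x\mapsto 1-e^{-x}$ is the (continuous) distribution function of $\mathcal{E}$, this is precisely the asserted convergence in distribution $\lambda_\epsilon T_\epsilon\to\mathcal{E}$. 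Alternatively, one may phrase the computation with Laplace transforms, mirroring the proof of Proposition \ref{prop1.1.3}: $\mathbb{E}[e^{-t\lambda_\epsilon T_\epsilon}\mid Y_0]=\lambda_\epsilon e^{-t\lambda_\epsilon}/\big(1-(1-\lambda_\epsilon)e^{-t\lambda_\epsilon}\big)\to 1/(1+t)$ for every $t\ge 0$, which is the Laplace transform of $\mathcal{E}$.

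There is no genuine obstacle in this lemma; the only point needing a little care is that $T_\epsilon$ is \emph{exactly} geometric only when the ball $B(Y_0,\epsilon)$ stays inside the cube, which is why the argument is carried out conditionally on $Y_0$, the limit $\epsilon\to 0$ being taken afterwards.
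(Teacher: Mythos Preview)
Your proof is correct and follows essentially the same route as the paper: condition on $Y_0$, use that $T_\epsilon$ is geometric with parameter $\lambda_\epsilon$, and pass to the limit in the (conditional) tail/CDF. The paper computes $\mathbb{P}(\lambda_\epsilon T_\epsilon\le t\mid Y_0)=1-(1-\lambda_\epsilon)^{\lfloor t/\lambda_\epsilon\rfloor}\to 1-e^{-t}$ and stops there; you work with the complementary tail and add the explicit dominated-convergence step to remove the conditioning, which is a welcome bit of tidiness but not a different idea.
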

\begin{proof}
Given $Y_{0}$, $T_{\epsilon}$ has a geometric distribution of parameter $\lambda_{\epsilon}=\lambda(B(Y_0,\epsilon))$. Let $t>0$,
\begin{equation*}
\mathbb{P}(\lambda_{\epsilon} T_{\epsilon}\leq t\mid Y_0)
=\sum_{n=1}^{\left\lfloor\frac{t}{\lambda_{\epsilon}}\right\rfloor}\lambda_{\epsilon}(1-\lambda_{\epsilon})^{n-1}
=1-\exp\left({\left\lfloor\frac{t}{\lambda_{\epsilon}}\right\rfloor}\log(1-\lambda_{\epsilon})\right),
\end{equation*}
it follows that, for $\mathcal{E}$ a random variable which follows $\exp(1)$,
\begin{equation*}
\underset{\epsilon\rightarrow0}{\lim}\ \mathbb{P}(\lambda_{\epsilon} T_{\epsilon}\leq t\mid Y_0)=1-e^{-t}=\mathbb{P}(\mathcal{E}\leq t),\quad a.s.
\end{equation*}
\end{proof}
\begin{proof}[Proof of Theorem \ref{theorem1.2}] 
 Let us prove that the family of couples $\left(\lambda_{\epsilon}T_{\epsilon},\frac{R_{T_{\epsilon}}}{T_{\epsilon}}\right)_{\epsilon>0}$ converges in distribution, as $\epsilon \rightarrow0$, to $(\mathcal{E},\mathcal{N}^{-2})$, where $\mathcal{E}$ and $\mathcal{N}^{-2}$ are assumed to be as above and independent.\\
Let $s>0$ and $t\in \mathbb{R}$ ,then using the independence of $(T_{\epsilon})_{\epsilon}$ and $(R_{n})_{n}$, we get:
\begin{eqnarray*}
\bigg|\mathbb{P}\left(\lambda_{\epsilon}T_{\epsilon}>s,\frac{R_{T_{\epsilon}}}{T^2_{\epsilon}}>t\right)-\mathbb{P}\left(\lambda_{\epsilon}T_{\epsilon}>s)\mathbb{P}(\mathcal{N}^{-2}>t\right)\bigg|
&\leq&\sum_{n>\frac{s}{\lambda_{\epsilon}}}\lambda_{\epsilon}(1-\lambda_{\epsilon})^{n-1}\bigg|\mathbb{P}\left(\frac{R_{n}}{n^{2}}>t\right)-\mathbb{P}\left(\mathcal{N}^{-2}>t\right)\bigg|\\
&\leq&\underset{n>\frac{s}{c\epsilon^{d}}}{\sup}\bigg|\mathbb{P}\left(\frac{R_{n}}{n^{2}}>t\right)-\mathbb{P}(\mathcal{N}^{-2}>t)\big|
\end{eqnarray*}
This latter goes to 0 as $\epsilon$ goes to 0, due to Proposition \ref{prop1.1.3}. Moreover by Lemma \ref{lemma2}, $\mathbb{P}(\lambda_{\epsilon}T_{\epsilon}>s)\rightarrow \mathbb{P}(\mathcal{E}>s)$ as $\epsilon \rightarrow 0$, hence:
\begin{equation*}
\forall s>0, \forall t \quad \underset{\epsilon\rightarrow0}{\lim}\ \mathbb{P}(\lambda_{\epsilon}T_{\epsilon}>s,\frac{R_{T^{2}_{\epsilon}}}{T_{\epsilon}}>t)-\mathbb{P}(\mathcal{E}>s,\mathcal{N}^{-2}>t)=0.
\end{equation*}
This proves that the couple $\left(\lambda_{\epsilon}T_{\epsilon},\frac{R_{T_{\epsilon}}}{T_{\epsilon}^{2}}\right)_{\epsilon>0}$ converges in distribution, as $\epsilon$ goes to 0, to $(\mathcal{E},\mathcal{N}^{-2})$.\\
Knowing that $\tau_{\epsilon}=R_{T_{\epsilon}}$, we thus find that:
\begin{equation*}
(c\epsilon^{d})^{2}\tau_{\epsilon}
=\left(\frac{c\epsilon^{d}}{\lambda_{\epsilon}}\right)^{2}\lambda_{\epsilon}^{2}T^{2}_{\epsilon}\frac{R_{T_{\epsilon}}}{T_{\epsilon}^{2}}\\
\end{equation*}
Since $(x,y)\mapsto x^{2}y$ is continuous,
$\lambda_{\epsilon}^{2}T^{2}_{\epsilon}\frac{R_{T_{\epsilon}}}{T_{\epsilon}^{2}}\overset{d}{\longrightarrow}\mathcal{E}^{2}\mathcal{N}^{-2} \text{ as } \epsilon \rightarrow0$.
Observe that $\left(\frac{c\epsilon^{d}}{\lambda_{\epsilon}}\right)^{2}\overset{a.s.}{\longrightarrow}1$, hence by Slutzky's Lemma, we end up with :
\begin{equation*}
(c\epsilon^{d})^{2}\tau_{\epsilon}\overset{d}{\rightarrow}\mathcal{E}^{2}\mathcal{N}^{-2},\quad \text{ as } \epsilon \rightarrow0.
\end{equation*}
\end{proof}
\section{$\mathbb{Z}$-extension of a mixing subshift of finite type}
Let $\mathcal{A}$ be a finite set, called the alphabet, and let $M$ be a matrix indexed by $\mathcal{A}\times\mathcal{A}$ with 0-1 entries. We suppose that there exists a positive integer $n_{0}$ such that each component of $M^{n_{0}}$ is non zero. The subshift of finite type with alphabet $\mathcal{A}$ and transition matrix $M$ is $(\Sigma, \theta)$, with
\begin{equation*}
\Sigma := \{w:=(w_{n})_{n\in \mathbb{Z}} : \forall n \in \mathbb{Z}\text{ , } M(w_{n},w_{n+1})=1 \}
\end{equation*}
together with the metric $d(w,w^{'}):= e^{-m}$, where $m$ is the greatest integer such that $w_{i}=w_{i}^{'}$ whenever $|i|<m$, and the shift $\theta:\Sigma \rightarrow \Sigma$, $\theta((w_{n})_{n\in \mathbb{Z}})=(w_{n+1})_{n\in \mathbb{Z}}$.
 Let $\nu$ be the Gibbs measure on $\Sigma$ associated to some H\"{o}lder continuous potential $h$, and denote by $\sigma^{2}_{h}$ the asymptotic variance of $h$ under the measure $\nu$. Recall that $\sigma^{2}_{h}$ vanishes if and only if $h$ is cohomologous to a constant, and in this case $\nu$ is the unique measure of maximal entropy.\\
 For any function $f: \Sigma \rightarrow \mathbb{R}$ we denote by $S_{n}f:= \Sigma_{l=0}^{n-1}f\circ \theta^{l}$ its ergodic sum.
 Let us consider a H\"{o}lder continuous function $\varphi : \Sigma \rightarrow \mathbb{Z} $, such that $\int \varphi d\nu=0$. We consider the $\mathbb{Z}$-extension $F$ of the shift $\theta$ by $\varphi$. Recall that
\begin{eqnarray*}
F : \Sigma \times \mathbb{Z} &\rightarrow& \Sigma \times \mathbb{Z}\\
(x,m)&\rightarrow& (\theta x, m+\varphi(x)).
\end{eqnarray*}
Recall that $\Sigma\times\mathbb{Z}$ is endowed with distance $d_{0}((w,l),(w^{'},l^{'})):=\max\{d(w,w^{'}),\mid l-l^{'}\mid\}$. Note that, if $\epsilon<1$, for every $(w,l)\in \Sigma\times \mathbb{Z}$, we have $\mu \left(B_{\Sigma\times\mathbb{Z}}((w,l),\epsilon)\right)=\nu(B_{\Sigma}(w,\epsilon))$.
We want to know the time needed for a typical orbit starting at $(x,m)\in\Sigma \times \mathbb{Z}$ to return $\epsilon$-close to the initial point after iterations of the map $F$. By the translation invariance we can assume that the orbit starts in the cell $m=0$. Recall that
\begin{eqnarray*}
\tau_{\epsilon}(x)&=&\min\{n\geq1 : F^{n}(x,0)\in B(x, \epsilon)\times \{0\}\}\\
&=&\min\{n\geq1 : S_{n}\varphi(x)=0 \text{ and }  d(\theta^{n}x,x)<\epsilon \}.
\end{eqnarray*}
 We know that there exists a positive integer $m_{0}$ such that the function $\varphi$ is constant on each $m_{0}$-cylinders.\\
 Let us denote by $\sigma^{2}_{\varphi}$ the asymptotic variance of $\varphi$:
\begin{equation*}
\sigma^{2}_{\varphi}=\underset{n\rightarrow \infty}{\lim} \frac{1}{n}\mathbb{E}[(S_{n}\varphi)^2].
\end{equation*}
  We assume that $\sigma^{2}_{\varphi}\neq0$ (otherwise $(S_{n}\varphi)_{n}$ is bounded).
 We reinforce this by the following non-arithmeticity hypothesis on $\varphi$: We suppose that, for any $u\in[-\pi;\pi]\backslash\{0\}$ the only solutions $(\lambda,g)$, with $\lambda \in \mathbb{C}$ and $g : \Sigma \rightarrow \mathbb{C}$ measurable with $|g|=1$, of the functional equation
 \begin{equation}\label{nonarithmeticity}
 g\circ\theta \overset{-}{g}=\lambda e^{iu. \varphi}
 \end{equation}
 is the trivial one $\lambda=1$ and $g=const$. The fact that there is no non constant $g$ satisfying \eqref{nonarithmeticity} for $\lambda=1$ ensures that $\varphi$ is not a coboundary and so that $\sigma_{\varphi}^{2}\neq0$.
  The fact that there exists $(\lambda,g)$ satisfying \eqref{nonarithmeticity} with $\lambda\neq1$ would mean that the range of $S_{n}\varphi$ is essentially contained in a sub-lattice of $\mathbb{Z}$; in this case we could just do a change of basis and apply our result to the new reduced $\mathbb{Z}$-extension. We emphasize that this non-arithmeticity condition is equivalent to the fact that all the circle extensions $T_{u}$ defined by $T_{u}(x,t)=(\theta(x), t + u.\varphi(x))$ are weakly mixing for $u \in [-\pi;\pi]\backslash\{0\}$.\\
In this section we obtain the following results:
\begin{theoreme}\label{theorem2.1.1}
The sequence of random variables $\frac{\log{\sqrt{\tau_{\epsilon}}}}{-\log{\epsilon}}$ converges $\nu$-almost everywhere as $\epsilon \rightarrow 0$ to the Hausdorff dimension $d$ of the measure $\nu$.
\end{theoreme}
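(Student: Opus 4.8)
The plan is to transpose the proof of Theorem~\ref{theorem1.1} to the dynamical setting, the random walk $(S_n)_n$ of the toy model being replaced by the $\mathbb{Z}$-cocycle $(S_n\varphi)_n$ and the independent uniform variables $(Y_n)_n$ by the orbit $(\theta^n x)_n$ on the base. For $\nu$-a.e.\ $x$ set $R_0(x):=0$ and $R_{p+1}(x):=\inf\{m>R_p(x):S_m\varphi(x)=0\}$; these are finite a.e.\ because $F$ is conservative, which follows from $\sigma_\varphi^2\neq0$ and the non-arithmeticity hypothesis \eqref{nonarithmeticity} via the local limit theorem. As in Section~2 one then has, for $\epsilon$ small,
\[
\tau_\epsilon(x)=R_{T_\epsilon(x)}(x),\qquad T_\epsilon(x):=\min\bigl\{\ell\ge1:\ \theta^{R_\ell(x)}x\in B_\Sigma(x,\epsilon)\bigr\},
\]
so that $\frac{\log\sqrt{\tau_\epsilon}}{-\log\epsilon}=\frac{\log\sqrt{R_{T_\epsilon}}}{\log T_\epsilon}\cdot\frac{\log T_\epsilon}{-\log\epsilon}$, and it suffices to prove the two analogues of Lemmas~\ref{lemmaRn} and~\ref{lemmaTs}, namely that $\frac{\log\sqrt{R_n}}{\log n}\to1$ and $\frac{\log T_\epsilon}{-\log\epsilon}\to d$ for $\nu$-a.e.\ $x$; the first statement concerns only the ``large scale'' $S_n\varphi$, the second only the ``small scale'' $\theta^n x$.

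For the statement on $R_n$, Feller's estimate \eqref{1} is replaced by the local limit theorem for Gibbs measures: the H\"older regularity and non-arithmeticity of $\varphi$ make the Nagaev--Guivarc'h spectral method applicable to the twisted transfer operators $f\mapsto\mathcal{L}(e^{iu\varphi}f)$, giving a constant $C>0$ with $\nu(S_n\varphi=0)\sim C/\sqrt n$. The lower bound $R_n\ge n^{2-\alpha}$ is then obtained as in Lemma~\ref{lemmaTs}: with $N_M(x):=\#\{1\le k\le M:S_k\varphi(x)=0\}$ one has $\mathbb{E}_\nu[N_M]\sim 2C\sqrt M$ and $\{R_n\le M\}=\{N_M\ge n\}$, so Markov's inequality gives $\nu(N_M\ge M^{1/2+\delta})=O(M^{-\delta})$, which combined with a Borel--Cantelli argument along a subsequence with ratios tending to $1$ and the monotonicity of $M\mapsto N_M$ yields $R_n\ge n^{2-\alpha}$ eventually a.e. For the upper bound $R_n\le n^{2+\alpha}$ we pass to the first-return map to the $0$-cell, i.e.\ the probability-preserving system $(\Sigma,\bar{\mathcal B},\nu,\Phi)$ with $\Phi(x):=\theta^{R_1(x)}x$, so that $R_n=\sum_{k=0}^{n-1}R_1\circ\Phi^k$ is an ergodic sum; since $\nu(R_1>s)=O(s^{-1/2})$ (a renewal/Tauberian consequence of the local limit theorem, adapting Feller's derivation of \eqref{1} to the transfer operator), we have $R_1^{\,p}\in L^1(\nu)$ for every $p<\tfrac12$, and Birkhoff's theorem for $\Phi$ together with the subadditivity of $t\mapsto t^p$ concludes exactly as in Lemma~\ref{lemmaRn}. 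A second-moment estimate would not suffice here: by the Darling--Kac phenomenon $N_M/\sqrt M$ does not concentrate, so the pathwise input of the ergodic theorem is genuinely needed. Letting $\alpha\to0$ gives $\frac{\log\sqrt{R_n}}{\log n}\to1$ a.e., and in particular $T_\epsilon\to\infty$ a.e.

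For the statement on $T_\epsilon$, the key remark is that $B_\Sigma(x,\epsilon)=\{w:w_i=x_i\text{ for }|i|\le\lfloor\log(1/\epsilon)\rfloor\}$ is a cylinder, so $T_\epsilon(x)$ is the first return time of $x$ into this cylinder under the induced map $\Phi$, which preserves the (exact-dimensional) Gibbs measure $\nu$ and acts on $\Sigma$ with the unchanged metric. The upper estimate $T_\epsilon\le\epsilon^{-d-\alpha}$ is obtained by conditioning on the cylinder $C=B_\Sigma(x,\epsilon)$: Kac's formula for $\Phi$ gives $\int_C\bigl(\min\{\ell\ge1:\Phi^\ell y\in C\}\bigr)\,d\nu(y)\le\nu(\Sigma)=1$, hence $\nu(T_\epsilon>\epsilon^{-d-\alpha}\mid x\in C)\le \epsilon^{d+\alpha}/\nu(C)$ by Markov; since $\nu$ satisfies a large-deviation bound for $-\tfrac1m\log\nu([x]_m)$ around the Hausdorff dimension $d$, off a set of measure $O(\epsilon^{c})$ one has $\nu(C)\ge\epsilon^{d+\alpha/2}$, whence $\nu(T_\epsilon>\epsilon^{-d-\alpha})=O(\epsilon^{\alpha/2})+O(\epsilon^{c})$. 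The lower estimate $T_\epsilon\ge\epsilon^{-d+\alpha}$ is the analogue of the bound on $\mathbb{P}(T_\epsilon<\cdot)$ in Lemma~\ref{lemmaTs}: a union bound gives $\nu(T_\epsilon<\epsilon^{-d+\alpha})\le\sum_{\ell<\epsilon^{-d+\alpha}}\nu(\theta^{R_\ell(x)}x\in B_\Sigma(x,\epsilon))$, and for the bulk of the range one shows $\nu(\theta^{R_\ell(x)}x\in B_\Sigma(x,\epsilon))\lesssim\epsilon^{d}$, using the exponential decay of correlations of $\nu$: $\{R_\ell=j\}$ is a union of cylinders determined by finitely many coordinates, while $\{x:d(\theta^jx,x)<\epsilon\}$ is a cylinder condition on two blocks around positions $0$ and $j$ which are disjoint for $j$ large, so the two events asymptotically decouple and the probability is comparable to the correlation integral $\int\nu(B_\Sigma(y,\epsilon))\,d\nu(y)\asymp\epsilon^d$; the remaining short-return range ($R_\ell$ small, involving near-periodic points) is controlled separately using the heavy tail of $R_1$, the estimate $R_\ell\approx\ell^2$ and the Gibbs property. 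Summing gives $\nu(T_\epsilon<\epsilon^{-d+\alpha})=O(\epsilon^{\alpha'})$ for some $\alpha'>0$. As in Lemma~\ref{lemmaTs}, Borel--Cantelli along $\epsilon_n=n^{-2/\alpha}$ (so that $\epsilon_n/\epsilon_{n+1}\to1$) and the monotonicity of $\epsilon\mapsto T_\epsilon$ promote both estimates to $\frac{\log T_\epsilon}{-\log\epsilon}\to d$ a.e.

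Combining the two statements exactly as in the proof of Theorem~\ref{theorem1.1}: since $T_\epsilon\to\infty$ a.e., we get $\frac{\log\sqrt{R_{T_\epsilon}}}{\log T_\epsilon}\to1$ and $\frac{\log T_\epsilon}{-\log\epsilon}\to d$, whence $\frac{\log\sqrt{\tau_\epsilon}}{-\log\epsilon}\to d$ $\nu$-a.e., which is Theorem~\ref{theorem2.1.1}. The step I expect to be the real obstacle is the decoupling estimate $\nu(\theta^{R_\ell(x)}x\in B_\Sigma(x,\epsilon))\lesssim\epsilon^d$ over the relevant range of $\ell$: in the toy model $T_\epsilon$ is literally geometric because $(Y_n)_n$ is i.i.d.\ and independent of the walk, whereas here one must establish that the base points $\theta^{R_\ell(x)}x$ equidistribute according to $\nu$ and decorrelate fast enough along the random, cocycle-determined return times $R_\ell$ --- the precise incarnation of the heuristic that the large scale $S_n\varphi$ and the small scale $\theta^n x$ are asymptotically independent. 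The secondary technical point is the tail bound $\nu(R_1>s)=O(s^{-1/2})$, which replaces Feller's elementary computation \eqref{1} by the spectral analysis of the transfer operator and an infinite-ergodic-theory renewal argument (compare \cite{penesaussol}).
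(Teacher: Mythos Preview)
Your approach is a faithful transposition of the toy-model strategy and is genuinely different from what the paper does. The paper does \emph{not} decompose $\tau_\epsilon=R_{T_\epsilon}$ in the dynamical setting; it works directly with the sets $G_n(\epsilon)=\{S_n\varphi=0,\ d(\theta^n x,x)<\epsilon\}$ and the doubly local limit theorem (Proposition~\ref{p3}), which estimates $\nu(A\cap\{S_n\varphi=0\}\cap\theta^{-n}B)$ for cylinders $A,B$. For the lower bound it shows $\sum_n\nu(K_N^\delta\cap G_n(\epsilon_n))<\infty$ on a Shannon--McMillan--Breiman set and applies Borel--Cantelli; for the upper bound it uses the last-visit partition $A_l(\epsilon)=G_l(\epsilon)\cap\theta^{-l}\{\tau_\epsilon>n-l\}$ and Proposition~\ref{p3} again. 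Both scales are handled in one stroke by the transfer-operator estimate, and no separate analysis of $R_n$ or $T_\epsilon$ is needed.

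Your route is plausible but imports machinery the paper neither proves nor cites, and one step is incorrectly sketched. The tail $\nu(R_1>s)=O(s^{-1/2})$ is nontrivial here: the renewal identity $\sum_n\nu(S_n\varphi=0)z^n=(1-\mathbb E_\nu[z^{R_1}])^{-1}$ underlying Feller's computation fails because $(S_n\varphi)_n$ is not Markov and $\{R_1=k\}$ is not independent of $\{S_{n-k}\varphi\circ\theta^k=0\}$; one needs operator-renewal theory (Sarig, Gou\"ezel, Melbourne--Terhesiu) rather than the elementary derivation of \eqref{1}. More seriously, your decoupling claim that ``$\{R_\ell=j\}$ and $\{d(\theta^jx,x)<\epsilon\}$ involve disjoint coordinate blocks for $j$ large'' is false as written: $\{R_\ell=j\}$ is determined by all coordinates $x_0,\dots,x_{j+m_0-1}$, which contain the block near position $j$ that the close-return event uses, so plain exponential decay of correlations does not separate them. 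A correct argument either establishes exponential mixing for the induced map $\Phi$ (another layer of Gibbs--Markov machinery) or bounds $\nu(T_\epsilon\le N)\le\nu(R_N>M)+\sum_{j\le M}\nu(G_j(\epsilon))$ and controls the last sum via Proposition~\ref{p3} --- at which point you have recovered the paper's engine anyway. What your decomposition buys is a clean conceptual separation of scales and an independent statement $\log\sqrt{R_n}/\log n\to1$; what the paper's direct approach buys is economy, since a single spectral estimate (Proposition~\ref{p3}) does the whole job.
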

\begin{theoreme}\label{theorem2.2}
The sequence of random variables $\nu((B_{\epsilon}(.))\sqrt{\tau_{\epsilon}(.)}$ converges in distribution with respect to every probability measure absolutely continuous with respect to $\nu$ as $\epsilon \rightarrow 0$ to $\frac{\mathcal{E}}{\mid\mathcal{N}\mid}$, where $\mathcal{E}$ and $\mathcal{N}$ are independent random variables, $\mathcal{E}$ having an exponential distribution of mean 1 and $\mathcal{N}$ having a standard Gaussian distribution.
\end{theoreme}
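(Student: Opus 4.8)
The plan is to run the scheme of the proof of Theorem~\ref{theorem1.2}, replacing the probabilistic independence available in the toy model by ergodic-theoretic substitutes. Keep the notation of the excerpt: let $\rho(x):=\inf\{n\ge1:S_n\varphi(x)=0\}$ be the first return time of $(x,0)$ to the zero fibre $\Sigma\times\{0\}$ under $F$, let $\hat F$ be the associated first-return (induced) map, which preserves $\nu$ by Kac's formula and is ergodic (since $\varphi$ is non-arithmetic and $\nu$-centered, $F$ is conservative ergodic), and set $R_n(x):=\sum_{j=0}^{n-1}\rho(\hat F^jx)$, the time of the $n$-th visit of the orbit of $(x,0)$ to $\Sigma\times\{0\}$. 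With $\lambda_\epsilon(x):=\nu(B_\Sigma(x,\epsilon))=\mu(B((x,0),\epsilon))$, one has for $\epsilon$ small, exactly as in Section~2, $\tau_\epsilon=R_{T_\epsilon}$ with $T_\epsilon(x):=\min\{l\ge1:\hat F^lx\in B_\Sigma(x,\epsilon)\}$; that is, $T_\epsilon$ is the first return time of the point $x$ to the ball $B_\Sigma(x,\epsilon)$ for the induced map $\hat F$. The proof then reduces to three statements: (a) $R_n/n^2$ converges in distribution to $\mathcal N^{-2}$; (b) $\lambda_\epsilon T_\epsilon$ converges in distribution to $\mathcal E$; (c) the pair $(\lambda_\epsilon T_\epsilon, R_{T_\epsilon}/T_\epsilon^2)$ converges in distribution to $(\mathcal E,\mathcal N^{-2})$ with $\mathcal E$ and $\mathcal N$ independent. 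Granting (c), the identity $\lambda_\epsilon^2\tau_\epsilon=(\lambda_\epsilon T_\epsilon)^2\,(R_{T_\epsilon}/T_\epsilon^2)$ and the continuity of $(x,y)\mapsto x^2y$ give $\lambda_\epsilon^2\tau_\epsilon\Rightarrow\mathcal E^2\mathcal N^{-2}$, hence $\lambda_\epsilon\sqrt{\tau_\epsilon}\Rightarrow\mathcal E/|\mathcal N|$, word for word as at the end of the proof of Theorem~\ref{theorem1.2} (the harmless factor $\lambda_\epsilon/\lambda_\epsilon=1$ replacing $c\epsilon^d/\lambda_\epsilon$).

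The analytic input behind (a) is a local limit theorem for the cocycle: $\nu(S_n\varphi=0)\sim c\,n^{-1/2}$ for some $c>0$ determined by the asymptotic variance $\sigma_\varphi^2$ (and the period of $\varphi$). This comes from the Nagaev--Guivarc'h spectral method: the transfer operator $\mathcal L$ of $(\Sigma,\theta,\nu)$ and its Fourier twists $\mathcal L_u g:=\mathcal L(e^{iu\varphi}g)$ act boundedly on a space of H\"older functions, $\mathcal L_0$ has a spectral gap at the simple eigenvalue $1$, and the non-arithmeticity hypothesis~\eqref{nonarithmeticity} is exactly what forces the spectral radius of $\mathcal L_u$ to be $<1$ for every $u\in[-\pi,\pi]\setminus\{0\}$; Fourier inversion of $\nu(S_n\varphi=0)=\frac1{2\pi}\int_{-\pi}^{\pi}\mathbb E_\nu[e^{iuS_n\varphi}]\,du$ then yields the asymptotics. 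Consequently the return sequence of $\hat F$ is regularly varying of index $1/2$ and $\hat F$ is pointwise dual ergodic, so by the Darling--Kac theorem (for $\mathbb Z$-extensions of mixing subshifts; compare \cite{penesaussol} in the $\mathbb Z^2$ case) the occupation time $V_N:=\#\{1\le k\le N:S_k\varphi=0\}$, normalised by $\mathbb E_\nu[V_N]\sim 2c\sqrt N$, converges in distribution to the Mittag--Leffler law of index $1/2$; inverting through the elementary identity $\{R_n\le N\}=\{V_N\ge n\}$ turns this into convergence of $R_n/n^2$ to the appropriate multiple of $\mathcal N^{-2}$. (Alternatively: Donsker's invariance principle for $S_n\varphi$, valid for Gibbs measures, together with convergence of the lattice occupation time to Brownian local time.)

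For (b), $T_\epsilon$ is the first return time of $x$ to the metric ball $B_\Sigma(x,\epsilon)$ under $\hat F$, which is a probability-preserving system with strong statistical properties: the induced map carries a Gibbs--Markov structure, hence has exponential decay of correlations, so the theory of quantitative recurrence for rapidly mixing systems (\cite{saussol}, and the references therein to Hirata and Saussol) applies and gives $\lambda_\epsilon(x)T_\epsilon(x)\Rightarrow\mathcal E$ both for $\nu$-almost every $x$ and in distribution with respect to any probability measure absolutely continuous with respect to $\nu$. One may also argue directly, as for Lemma~\ref{lemma2}: the $\hat F$-orbit $(\theta^{R_l}x)_l$ equidistributes with respect to $\nu$, and---using decay of correlations for $\hat F$ and the mixing refinement of the local limit theorem above---the events $\{\hat F^lx\in B_\Sigma(x,\epsilon)\}$ become asymptotically independent with common probability $\lambda_\epsilon(x)$ and do not cluster, so that $T_\epsilon$ is asymptotically geometric of parameter $\lambda_\epsilon(x)$ and $\lambda_\epsilon T_\epsilon\Rightarrow\mathcal E$.

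The main obstacle is the asymptotic independence (c). In the toy model $(T_\epsilon)_\epsilon$ and $(R_n)_n$ were genuinely independent, whereas here $R_n=\sum_{j<n}\rho\circ\hat F^j$ is manufactured from the very map $\hat F$ that also governs $T_\epsilon$, so they cannot simply be declared independent. The plan is to imitate the final estimate in the proof of Theorem~\ref{theorem1.2}: condition on $\{T_\epsilon=l\}$ and show that the conditional law of $R_l/l^2$ still converges to $\mathcal N^{-2}$, \emph{uniformly for $l$ large}. This should be true because $T_\epsilon\to\infty$ as $\epsilon\to0$, while the conditioning event excises, at each of the first $l$ steps, only an $O(\lambda_\epsilon)$-proportion of phase space---a vanishing perturbation of the stationary Birkhoff sum $\sum_{j<l}\rho\circ\hat F^j$, which therefore keeps the same stable limit---and the constraint $\hat F^lx\in B_\Sigma(x,\epsilon)$ touches only the single term $\rho(\hat F^lx)=O(\rho(x))$, negligible at scale $l^2$. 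Turning this into a quantitative statement is precisely where the ``large scale / small scale'' picture of the introduction has to become an estimate, and it is the crux: one needs a joint mixing local limit theorem bounding $\nu\bigl(g\cdot(h\circ\theta^n)\cdot\mathbf 1_{\{S_n\varphi=0\}}\bigr)$ by $\nu(g)\,\nu(h)\,\nu(S_n\varphi=0)$, with enough uniformity---again a by-product of the spectral picture of the second paragraph---to decouple the base position $\theta^nx$, which drives $T_\epsilon$, from the cocycle value $S_n\varphi(x)$, which drives the returns $R_n$. Once (a), (b) and (c) are in hand the conclusion is formal, following verbatim the Laplace-transform/distribution-function computation of Section~2 and an application of Slutsky's lemma; the passage from $\nu$ to an arbitrary absolutely continuous probability measure is standard, the spectral gap of the transfer operator flattening the initial density.
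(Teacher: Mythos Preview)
Your overall strategy---factoring $\tau_\epsilon=R_{T_\epsilon}$ and proving joint convergence of $(\lambda_\epsilon T_\epsilon,\,R_{T_\epsilon}/T_\epsilon^2)$---is the natural attempt to transplant the toy-model proof, but it is \emph{not} the route the paper takes, and the gap you yourself flag at step~(c) is real and is precisely what the paper's method is designed to avoid.

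The paper works directly with $X_k:=\nu(C_k(x))\sqrt{\tau_{e^{-k}}}$ conditioned on $C_k(x)$ and uses a renewal/integral-equation argument (borrowed from \cite{pzs,pzs2}). The last-visit decomposition
\[
\nu(C_k(x))=\sum_{l=0}^{n}\nu\bigl(C_k(x)\cap\{S_l\varphi=0\}\cap\theta^{-l}(C_k(x)\cap\{\tau_{e^{-k}}>n-l\})\bigr)
\]
is combined with the doubly-local limit theorem (Proposition~\ref{p3})---exactly the ``joint mixing LLT'' you mention---to give tightness (Lemma~\ref{lemma 3.4}) and then to show (Lemma~\ref{lemma3.4.2}) that every subsequential limit $X$ satisfies
\[
1=\mathbb P(X>t)+\beta t\int_0^1\frac{\mathbb P(X>t\sqrt{1-u})}{\sqrt u}\,du.
\]
This equation is solved by Laplace transform (Lemmas~\ref{lemma3.4.3}--\ref{lemma3.4.4}), identifying $X^2$ with $c_\beta^2\,\mathcal E^2/\mathcal N^2$. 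The two scales are never separated; the decoupling happens implicitly, inside the integral equation.

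Your step~(c), by contrast, asks for the conditional law of $R_l/l^2$ given $\{T_\epsilon=l\}$ to converge to $\mathcal N^{-2}$ uniformly in large $l$. The event $\{T_\epsilon=l\}$ constrains every iterate $\hat F x,\ldots,\hat F^{l}x$, and $R_l=\sum_{j<l}\rho\circ\hat F^j$ is built from those same iterates; the heuristic ``only an $O(\lambda_\epsilon)$-proportion of phase space is excised at each step'' does not by itself control the conditional law of a \emph{heavy-tailed} stable sum---small perturbations of the trajectory can move the few dominant terms. Making this rigorous would require a uniform-in-$l$ conditioned stable limit theorem for $R_l$, which goes well beyond what Proposition~\ref{p3} delivers. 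Step~(b) also hides work: you invoke exponential mixing and the Hirata-type exponential law for the \emph{induced} map $\hat F$, but $\hat F$ is the first-return map to the zero fibre of a $\mathbb Z$-extension with unbounded return time $\rho$, and establishing a Gibbs--Markov structure plus the needed recurrence statistics for metric balls of $\Sigma$ under $\hat F$ is itself a substantial detour that the paper never needs. In short, your (a) is fine, your (b) is plausible but not free, and your (c) is the crux you say it is---the paper's renewal approach is there precisely to dodge it.
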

\begin{corollary}\label{corollary}
If the measure $\nu$ is not the measure of maximal entropy, then the sequence of random variables $\frac{\log\sqrt{\tau_{\epsilon}}+d\log{\epsilon}}{\sqrt{-\log\epsilon}}$ converges in distribution as $\epsilon\rightarrow 0$ to a centered Gaussian random variable of variance $2\sigma^{2}_{h}$.\\
\end{corollary}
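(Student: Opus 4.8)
The plan is to read off the corollary from Theorem~\ref{theorem2.2} together with the classical central limit theorem for Birkhoff sums of the potential $h$; the extension $\varphi$ plays no further role. Fix a probability measure $\mathbb{P}$ absolutely continuous with respect to $\nu$. Using $\log\sqrt{\tau_\epsilon}=\log\big(\nu(B_\epsilon(\cdot))\sqrt{\tau_\epsilon}\big)-\log\nu(B_\epsilon(\cdot))$ one has
\begin{equation*}
\frac{\log\sqrt{\tau_\epsilon}+d\log\epsilon}{\sqrt{-\log\epsilon}}=\frac{\log\big(\nu(B_\epsilon(\cdot))\sqrt{\tau_\epsilon}\big)}{\sqrt{-\log\epsilon}}\;-\;\frac{\log\nu(B_\epsilon(\cdot))-d\log\epsilon}{\sqrt{-\log\epsilon}}.
\end{equation*}
By Theorem~\ref{theorem2.2} and the continuous mapping theorem the numerator of the first term converges in distribution to $\log(\mathcal{E}/|\mathcal{N}|)$, which is almost surely finite; a tight family divided by $\sqrt{-\log\epsilon}\to\infty$ converges to $0$ in probability, so the first term is negligible. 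By Slutsky's lemma it then remains to show that the second term converges in distribution to a centered Gaussian of variance $2\sigma^2_h$ (a centered Gaussian being symmetric, the overall sign is immaterial).

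For the second term I would invoke the Gibbs property of $\nu$. For $\epsilon<1$ the ball $B_\Sigma(x,\epsilon)$ coincides with the symmetric cylinder $[x_{-m},\dots,x_m]$ for $m=m(\epsilon):=\lfloor-\log\epsilon\rfloor$, so that $0\le-\log\epsilon-m<1$; and, writing $P(h)$ for the topological pressure of $h$, there is $C\ge1$, independent of $x$ and $m$, with
\begin{equation*}
C^{-1}\le\frac{\nu\big([x_{-m},\dots,x_m]\big)}{\exp\big(S_{2m+1}h(\theta^{-m}x)-(2m+1)P(h)\big)}\le C.
\end{equation*}
Since $\nu$ is exact dimensional with dimension $d$, the ergodic theorem forces the linear drift to cancel, which identifies $d=2\big(P(h)-\int h\,d\nu\big)$ (this identification is already implicit in the proof of Theorem~\ref{theorem2.1.1}). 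Substituting it and setting $\bar h:=h-\int h\,d\nu$, the displayed estimate gives, uniformly in $x$,
\begin{equation*}
\log\nu(B_\epsilon(x))-d\log\epsilon=S_{2m+1}\bar h(\theta^{-m}x)+O(1)=\sum_{j=-m}^{m}\bar h(\theta^j x)+O(1).
\end{equation*}

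Because $\nu$ is not the measure of maximal entropy, $h$ is not cohomologous to a constant and $\sigma^2_h>0$, so $\bar h$ is a H\"older, $\nu$-centered observable satisfying a non-degenerate central limit theorem over the mixing subshift of finite type $(\Sigma,\theta)$; this CLT holds for the distribution under any probability measure absolutely continuous with respect to $\nu$ (spectral gap / Nagaev--Guivarc'h perturbed transfer operator), i.e.\ $N^{-1/2}S_N\bar h\Rightarrow\mathcal{N}(0,\sigma^2_h)$. Applying this with $N=2m+1$ (using $\theta$-invariance of $\nu$ to recenter the window), and using $\tfrac{2m+1}{-\log\epsilon}\to2$ together with $O(1)/\sqrt{-\log\epsilon}\to0$, one gets
\begin{equation*}
\frac{\log\nu(B_\epsilon(\cdot))-d\log\epsilon}{\sqrt{-\log\epsilon}}=\sqrt{\tfrac{2m+1}{-\log\epsilon}}\cdot\frac{S_{2m+1}\bar h(\theta^{-m}\cdot)}{\sqrt{2m+1}}+o(1)\ \Longrightarrow\ \mathcal{N}(0,2\sigma^2_h),
\end{equation*}
and combining with the first step through Slutsky's lemma concludes.

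I expect the delicate point to be the second paragraph: upgrading the $\nu$-a.e.\ convergence of the local dimension that underlies Theorem~\ref{theorem2.1.1} to its $\sqrt{-\log\epsilon}$-scale Gaussian fluctuations. One must check that the remainders in the Gibbs comparison (and the one coming from $-\log\epsilon-m\in[0,1)$) are genuinely uniform in $x$, so that they disappear after dividing by $\sqrt{-\log\epsilon}$, and that the central limit theorem for $\bar h$ is indeed available with respect to the whole class of absolutely continuous reference measures, not merely $\nu$ itself.
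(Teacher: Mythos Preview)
Your proof is correct and follows essentially the same approach as the paper: both decompose the quantity as $\frac{\log(\nu(B_\epsilon(\cdot))\sqrt{\tau_\epsilon})}{\sqrt{-\log\epsilon}}-\frac{\log\nu(B_\epsilon(\cdot))+d(-\log\epsilon)}{\sqrt{-\log\epsilon}}$, kill the first term via Theorem~\ref{theorem2.2} (tightness divided by something going to infinity), and handle the second via the Gibbs comparison $\log\nu(C_k(x))=\sum_{j=-k}^{k}h\circ\theta^{j}(x)+O(1)$ together with the CLT for the Birkhoff sums of the centered potential. Your write-up is in fact somewhat more careful than the paper's on the points you flagged (uniformity of the Gibbs $O(1)$, the passage from discrete $k$ to continuous $\epsilon$, and the CLT under absolutely continuous reference measures).
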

\subsection{Spectral theory of the transfer operator and Local Limit Theorem} In this subsection, we follow  \cite{penesaussol} to adapt our results.
To begin with, let us define:
\begin{equation*}
\hat{\Sigma}:=\{w:=(w_{n})_{n\in \mathbb{N}}: \forall n\in \mathbb{N}, M(w_{n},w_{n+1})=1\},
\end{equation*}
the set of all one-sided infinite sequences of elements of $\mathcal{A}$, endowed with the metric $\hat{d}((w_{n})_{n\geq0},(w_{n}^{'})_{n\geq0}):=e^{-\inf\{m\geq0 : w_{m}\neq w_{m}^{'}\}}$, and the one-sided shift map $\hat{\theta}((w_{n})_{n\geq0})=(w_{n+1})_{n\geq0}$. The resulting topology is generated by the collection of cylinders:
\begin{equation*}
C_{a_{0},...,a_{n}}=\{(w_{n})_{n\in \mathbb{N}}\in \hat{\Sigma}:w_{0}=a_{0},...,w_{n}=a_{n}\}.
\end{equation*}
Let us introduce the canonical projection $\Pi : \Sigma \rightarrow \hat{\Sigma}$, $\Pi((w_{n})_{n\in \mathbb{Z}})=(w_{n})_{n\geq0}$. Denote by $\hat{\nu}$ the image probability measure (on $ \hat{\Sigma})$ of $ \nu$ by $\Pi$.
There exists a function $\psi : \hat{\Sigma} \rightarrow \mathbb{Z}$ such that $\psi \circ \Pi = \varphi \circ \theta^{m_{0}}$.\\
let us denote by $P:L^{2}(\hat{\nu}) \rightarrow L^{2}(\hat{\nu})$ the Perron-Frobenius operator such that:
\begin{equation*}
\forall f,g \in L^{2}(\hat{\nu}), \int_{\hat{\Sigma}}Pf(x)g(x)d\hat{\nu}(x)=\int_{\hat{\Sigma}}f(x)g\circ \hat{\theta}(x)d\hat{\nu}(x).
\end{equation*}
Let $\eta \in ]0;1[$. Let us denote by $\mathcal{B}$ the set of bounded $\eta$-H\"{o}lder continuous function $g : \hat{\Sigma} \rightarrow \mathbb{C}$ endowed with the usual H\"{o}lder norm :
\begin{equation*}
||g||_{\mathcal{B}}:=||g||_{\infty} + \underset{x\neq y}{\sup}\frac{|g(y)-g(x)|}{\hat{d}(x,y)^{\eta}}.
\end{equation*}
We denote by $\mathcal{B}^{*}$ the topological dual of $\mathcal{B}$. For all $u\in \mathbb{R}$, we consider the operator $P_{u}$ defined on $(\mathcal{B},||.||_{\mathcal{B}})$ by:
\begin{equation*}
P_{u}(f):=P(e^{iu\psi}f).
\end{equation*}
 Note that the hypothesis of non-arithmeticity of $\varphi$ is equivalent to the following one on $\psi$:
for any $u \in [-\pi; \pi]\backslash\{0\}$, the operator $P_{u}$ has no eigenvalue on the unit circle.\\
 We will use the method introduced by Nagaev in \cite{nagaev1} and \cite{nagaev2}, adapted by Guivarch and Hardy in \cite{guivarch} and extended by Hennion and Herv\'{e} in \cite{hennion}. It is based on the family of operators $(P_{u})_{u}$ and their spectral properties expressed in the two next propositions.
\begin{proposition}\label{p1}(Uniform Contraction).
There exist $\alpha \in(0;1)$ and $ C>0$ such that, for all $u\in [-\pi;\pi]\backslash[-\beta;\beta]$ and all integer $n\geq0$, for all $f\in \mathcal{B}$, we have:
\begin{equation}
||P^{n}_{u}(f)||_{\mathcal{B}}\leq C\alpha^{n}||f||_{\mathcal{B}}.
\end{equation}
\end{proposition}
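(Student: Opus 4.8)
The plan is to obtain the uniform contraction from three ingredients: the classical spectral picture of the unperturbed transfer operator $P=P_{0}$, a Doeblin--Fortet (Lasota--Yorke) inequality holding uniformly in $u$, and a compactness argument on the parameter set $K:=[-\pi;\pi]\setminus[-\beta;\beta]$, where $\beta>0$ is the small constant reserved for the perturbative analysis of $P_{u}$ near $u=0$.

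First I would recall, via Ruelle's theorem for Gibbs measures of Hölder potentials on a mixing subshift of finite type, that $P$ is quasi-compact on $(\mathcal{B},\|\cdot\|_{\mathcal{B}})$: $1$ is a simple isolated eigenvalue, with the constant function as eigenfunction since $P\mathbf{1}=\mathbf{1}$ for our normalisation, and the rest of the spectrum lies in a disc of radius $\rho_{0}<1$. The proof of this rests on the seminorm estimate $v_{\eta}(P^{n}f)\le A\theta^{n}v_{\eta}(f)+A\|f\|_{\infty}$ for some $\theta\in(0,1)$, together with $\|P^{n}f\|_{\infty}\le\|f\|_{\infty}$, where $v_{\eta}(\cdot)$ is the $\eta$-Hölder seminorm. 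I would then upgrade this uniformly in $u$. From positivity of $P$ and $|e^{iu\psi}|\equiv1$ we get the pointwise bound $|P_{u}^{n}f|\le P^{n}|f|$, hence $\|P_{u}^{n}f\|_{\infty}\le\|f\|_{\infty}$; and since $\psi$ is bounded and locally constant (constant on cylinders of a fixed depth), multiplication by $e^{iu\psi}$ is bounded on the Banach algebra $\mathcal{B}$ with a bound independent of $u\in[-\pi;\pi]$, so that combining with the seminorm estimate for $P$ yields constants $C_{1}>0$ and $r\in(0,1)$, not depending on $u$, with
\[
v_{\eta}(P_{u}^{n}f)\le C_{1}r^{n}v_{\eta}(f)+C_{1}\|f\|_{\infty},\qquad \|P_{u}^{n}f\|_{\infty}\le\|f\|_{\infty}.
\]
By the Ionescu-Tulcea--Marinescu / Hennion--Hervé theorem every $P_{u}$ is then quasi-compact with essential spectral radius at most $r$. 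Now fix $u\in[-\pi;\pi]\setminus\{0\}$: by the non-arithmeticity hypothesis in its equivalent form on $\psi$, $P_{u}$ has no eigenvalue on the unit circle, so its spectrum above modulus $r$ consists of finitely many eigenvalues, all of modulus $<1$; since the spectrum is compact this forces the spectral radius $r(P_{u})$ to be strictly less than $1$.

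Next I would make the estimate uniform over the compact set $K$. The map $u\mapsto P_{u}$ is Lipschitz in operator norm on $\mathcal{B}$, because $\|P_{u}f-P_{v}f\|_{\mathcal{B}}\le\|P\|_{\mathcal{B}}\,\|e^{iu\psi}-e^{iv\psi}\|_{\mathcal{B}}\,\|f\|_{\mathcal{B}}$ and $u\mapsto e^{iu\psi}$ is Lipschitz into $\mathcal{B}$ ($\psi$ being bounded and locally constant); consequently $u\mapsto P_{u}^{N}$ is norm-continuous for each $N$. Fix $u_{0}\in K$ and $\kappa\in(r(P_{u_{0}}),1)$; by the spectral radius formula there is an $N=N(u_{0})$ with $\|P_{u_{0}}^{N}\|_{\mathcal{B}}<\kappa^{N}$, hence $\|P_{u}^{N}\|_{\mathcal{B}}<\kappa^{N}$ for all $u$ in a neighbourhood $V_{u_{0}}$ of $u_{0}$ in $K$. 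Writing $n=qN+s$ with $0\le s<N$ and using $\sup_{u\in K}\|P_{u}\|_{\mathcal{B}}<\infty$, this gives $\|P_{u}^{n}\|_{\mathcal{B}}\le C_{u_{0}}\kappa^{n}$ on $V_{u_{0}}$, with $C_{u_{0}}$ independent of $n$. Covering $K$ by finitely many such $V_{u_{0}}$ and retaining the largest $\kappa$ and the largest constant yields $\alpha\in(0,1)$ and $C>0$ with $\|P_{u}^{n}f\|_{\mathcal{B}}\le C\alpha^{n}\|f\|_{\mathcal{B}}$ for all $u\in K$, all $n\ge0$ and all $f\in\mathcal{B}$, which is the asserted inequality.

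The main obstacle is this last step: spectral radius is only upper semicontinuous under norm perturbations, so one cannot simply take a supremum over $u$; the device of replacing $r(P_{u_{0}})$ by an honest $N$-th root norm bound $\|P_{u_{0}}^{N}\|_{\mathcal{B}}<\kappa^{N}$ and then chaining blocks of length $N$ is exactly what converts the pointwise-in-$u$ spectral gap into a genuinely uniform exponential estimate, and one must check that the constants produced by the finite subcover do not depend on $n$. A secondary, more routine technical point is the uniform Doeblin--Fortet inequality, where the crucial input is that $\psi$ is locally constant, so that the Hölder norm of $e^{iu\psi}$ and the resulting constants can be bounded independently of $u\in[-\pi;\pi]$.
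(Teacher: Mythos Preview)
Your argument is correct and is precisely the standard way to substantiate the paper's one-line justification, which merely states that the property ``easily follows from the fact that the spectral radius is smaller than $1$ for $u\neq 0$.'' You have supplied the details the paper leaves implicit---notably the compactness argument on $K=[-\pi,\pi]\setminus(-\beta,\beta)$ that upgrades the pointwise spectral-radius bound to a uniform exponential estimate---so the approaches coincide.
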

\hspace{1mm}This property easily follows from the fact that the spectral radius is smaller than 1 for $u\neq 0$.
In addition, since $P$ is a quasicompact operator on $\mathcal{B}$ and since $u\mapsto P_{u}$ is a regular perturbation of $P_{0}=P$, we have :
\begin{proposition}\label{p2}(Perturbation Result).
There exist $\alpha > 0, \beta > 0, C > 0,c_{1} > 0, \theta \in ]0;1[  $ such that: there exists $u\mapsto \lambda_{u}$ belonging to $C^{3}([-\beta;\beta] \rightarrow \mathbb{C})$, there exists $u\mapsto v_{u}$ belonging to $C^{3}([-\beta;\beta] \rightarrow \mathcal{B})$, there exists $u \mapsto \varphi_{u}$ belonging to $C^{3}([-\beta;\beta] \rightarrow \mathcal{B^{\ast}})$ such that, for all $u\in [-\beta; \beta]$, for all $f \in \mathcal{B}$ and for all $n \geq 0 $, we have the decomposition:
\begin{equation*}
P^{n}_{u}(f)=\lambda_{u}^{n}\varphi_{u}(f)v_{u} + N_{u}^{n}(f),
\end{equation*}
with
\begin{enumerate}
\item $||N_{u}^{n}(f)||_{\mathcal{B}}\leq {C \alpha^{n}||f||_{\mathcal{B}}}$,\label{p21}
\item $|\lambda_{u}|\leq e^{-c_{1}|u|^{2}}$ and $c_{1}|u|^{2}\leq \sigma^{2}_{\phi}u.u$,\label{p22}
\item with initial values : $v_{0}=\mathds{1} , \phi_{0}=\hat{\nu}, \lambda_{u=0}^{'}=0$ and $\lambda_{u=0}^{''}=-\sigma^{2}_{\varphi}$.
\end{enumerate}
\end{proposition}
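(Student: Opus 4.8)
The plan is to run the classical Nagaev--Guivarch--Hennion spectral argument, treating $u\mapsto P_u$ as a smooth perturbation of the transfer operator $P=P_0$ on the H\"older space $(\mathcal{B},\|\cdot\|_{\mathcal{B}})$. \emph{First}, one needs quasicompactness of $P$ on $\mathcal{B}$. Since $\hat\nu$ is the Gibbs measure of a H\"older potential for the mixing one-sided subshift $(\hat\Sigma,\hat\theta)$, the transfer operator satisfies a Doeblin--Fortet (Lasota--Yorke) inequality on $\mathcal{B}$, of the form $\|P^n f\|_{\mathcal{B}}\le C\rho^n\|f\|_{\mathcal{B}}+C\|f\|_\infty$ with $\rho<1$, while the unit ball of $\mathcal{B}$ is relatively compact in $(C(\hat\Sigma),\|\cdot\|_\infty)$ by Arzel\`a--Ascoli; by Hennion's theorem $P$ is then quasicompact, and the mixing of $(\hat\theta,\hat\nu)$ forces the peripheral spectrum to reduce to the simple eigenvalue $1$, with eigenfunction $\mathds{1}$ and eigenprojection $f\mapsto\hat\nu(f)\mathds{1}$, the remainder of the spectrum lying in a closed disk of radius $\alpha_0<1$. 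This input is quoted from the thermodynamic formalism (Ruelle, Parry--Pollicott) rather than reproved; everything after it is soft.

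\emph{Second}, because $\psi$ is bounded --- it is integer-valued and constant on cylinders, hence lies in $\mathcal{B}$ --- the map $u\mapsto e^{iu\psi}$ is entire from $\mathbb{R}$ into $\mathcal{B}$, with derivatives $(i\psi)^k e^{iu\psi}$ bounded in $\mathcal{B}$; multiplication being continuous on $\mathcal{B}$, the map $u\mapsto P_u=P(e^{iu\psi}\,\cdot)$ is $C^\infty$, in particular $C^3$, into the space of bounded operators on $\mathcal{B}$. Kato's analytic perturbation theory, applied at the isolated simple eigenvalue $1$ of $P_0$, then produces, on a small interval $[-\beta,\beta]$, a simple eigenvalue $\lambda_u$, a rank-one spectral projection $\Pi_u=\varphi_u(\cdot)\,v_u$ with $v_u\in\mathcal{B}$ and $\varphi_u\in\mathcal{B}^\ast$, all of class $C^3$ in $u$, with $\lambda_0=1$, $v_0=\mathds{1}$, $\varphi_0=\hat\nu$; shrinking $\beta$ if necessary, the complementary part $N_u:=P_u(\mathrm{Id}-\Pi_u)$ has spectral radius $\le\alpha<1$ uniformly, so that $\|N_u^n\|_{\mathcal{B}}\le C\alpha^n$. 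Writing $P_u^n=\lambda_u^n\Pi_u+N_u^n$ gives exactly the claimed decomposition together with item (1).

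\emph{Third}, the asymptotics of $\lambda_u$ come from a Taylor expansion. Differentiating $P_u v_u=\lambda_u v_u$ in $u$, pairing against $\varphi_u$ and setting $u=0$ gives $\lambda_0'=\hat\nu\bigl(P(i\psi)\bigr)=i\int\psi\,d\hat\nu=i\int\varphi\,d\nu=0$; one further differentiation yields the Green--Kubo identity $\lambda_0''=-\Bigl(\int\psi^2\,d\hat\nu+2\sum_{k\ge1}\int\psi\,(\psi\circ\hat\theta^k)\,d\hat\nu\Bigr)=-\sigma^2_\varphi$, where the last equality uses $\psi\circ\Pi=\varphi\circ\theta^{m_0}$ (so that $S_n\psi$ and $S_n\varphi$ differ by a bounded term, irrelevant for the asymptotic variance) together with the classical identification $\sigma^2_\varphi=-\partial_u^2\log\lambda_u\big|_{u=0}$. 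Hence $\lambda_u=1-\tfrac12\sigma^2_\varphi u^2+o(u^2)$ near $0$; since the non-arithmeticity hypothesis forces $\sigma^2_\varphi>0$, shrinking $\beta$ once more gives $|\lambda_u|\le e^{-c_1|u|^2}$ with $0<c_1\le\sigma^2_\varphi/2$, which is item (2), while item (3) simply records the values of $v_0$, $\varphi_0$, $\lambda_0'$ and $\lambda_0''$ found along the way.

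The only genuinely substantial step is the first one --- quasicompactness of the transfer operator on the H\"older space --- and it is classical; the rest is the standard abstract perturbation machinery. Within the third step the sole point needing care is the bookkeeping identifying $\lambda_0''$ with $-\sigma^2_\varphi$, in particular the passage from the two-sided $\varphi$ to the one-sided $\psi$ through $\psi\circ\Pi=\varphi\circ\theta^{m_0}$, and the verification that $\alpha$, $C$ and $c_1$ can be chosen uniform over the selected neighbourhood of $0$.
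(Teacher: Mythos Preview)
Your proposal is correct and follows exactly the approach the paper invokes: the paper does not prove this proposition but merely states it as a consequence of the quasicompactness of $P$ on $\mathcal{B}$ and of $u\mapsto P_u$ being a regular perturbation of $P_0=P$, citing Nagaev, Guivarc'h--Hardy, and Hennion--Herv\'e for the method. Your sketch fills in precisely this classical Nagaev--Guivarc'h--Hennion argument, so there is nothing to add.
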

\begin{lemma}\label{lemma LLT}
 There exist $\gamma^{'}>0$ and $C_{\eta}>0$ such that, $\forall q\geq m_{0}$ and all $2q$-cylinder $\hat{A}$ of $\hat{\Sigma}$, we have:
\begin{equation}
\forall u\in[-\pi,\pi],\quad ||P_{u}^{q}P^{q}(1_{\hat{A}}\circ\hat{\theta}^{m_{0}})||_{\mathcal{B}}\leq C_{\eta} e^{-\gamma^{'}(2q-m_{0})}.
\end{equation}
In particular, we have $\hat{\nu}(\hat{A})\leq C_{\eta}e^{-\gamma^{'}(2q-m_{0})}$.
\end{lemma}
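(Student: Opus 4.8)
The plan is to reduce the bound to the completely explicit action of the transfer operator on indicators of cylinders, and then to play the enormous H\"older seminorm carried by $1_{\hat A}$ (a cylinder of depth $\asymp 2q$) against the smoothing produced by the $\asymp 2q$ iterations hidden inside $P_u^qP^q$. First I would record the relevant structure of $P$: writing $h$ for the normalised H\"older potential of $\hat\nu$, one has $Pg(x)=\sum_{\hat\theta y=x}e^{h(y)}g(y)$, $P1=1$, $\int Pg\,d\hat\nu=\int g\,d\hat\nu$, the multiplicative identity $P\big(f\cdot(g\circ\hat\theta)\big)=g\cdot Pf$, and the pointwise domination $|P_uf|=|P(e^{iu\psi}f)|\le P|f|$ (since $e^{iu\psi}$ has modulus one and $e^h>0$); topological mixing gives $\rho\in(0,1)$ and $C>0$ with $\sup_y e^{S_nh(y)}\le C\rho^n$ for all $n$. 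Since $q\ge m_0$, iterating the multiplicative identity with $f=1$ gives $P^{m_0}\big(1_{\hat A}\circ\hat\theta^{m_0}\big)=1_{\hat A}\cdot P^{m_0}1=1_{\hat A}$, so the object to estimate is $\big\|P_u^q\big(P^{\,q-m_0}1_{\hat A}\big)\big\|_{\mathcal B}$.

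Next I would compute $P^{\,q-m_0}1_{\hat A}$, and then $P_u^q$ of it, explicitly. As $\hat A$ has depth $\asymp 2q>q-m_0$, each point has \emph{at most one} $\hat\theta^{\,q-m_0}$-preimage in $\hat A$, so $P^{\,q-m_0}1_{\hat A}=e^{S_{q-m_0}h(y^\ast(\cdot))}1_{\hat B}$ with $\hat B$ a cylinder of depth $\asymp q+m_0$ and $y^\ast(x)$ that preimage; this has sup-norm $\le C\rho^{\,q-m_0}$ and H\"older seminorm $\le C\rho^{\,q-m_0}\big(1+e^{\eta(q+m_0)}\big)$, the harmless part coming on $\hat B\times\hat B$ from H\"older continuity of $h$ together with $\hat d(y^\ast(x),y^\ast(x'))=e^{-(q-m_0)}\hat d(x,x')$, and the large factor $e^{\eta(q+m_0)}$ coming \emph{only} from the jump of $1_{\hat B}$ across $\partial\hat B$ at the fine scale $e^{-(q+m_0)}$. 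Since $P_u$ sends a function supported on a cylinder of depth $L$ to (a H\"older weight)$\times$(the indicator of a cylinder of depth $L-1$), iterating $q$ times yields $P_u^q\big(P^{\,q-m_0}1_{\hat A}\big)=e^{S_qh(y^{\ast\ast}(\cdot))+iu\Psi_q(y^{\ast\ast}(\cdot))}\,\big(P^{\,q-m_0}1_{\hat A}\big)\big(y^{\ast\ast}(\cdot)\big)\cdot 1_{\hat B'}$, where $\Psi_q=\sum_{j<q}\psi\circ\hat\theta^j$, $\hat B'$ is a cylinder of depth $\asymp m_0$ \emph{independent of $q$}, and $y^{\ast\ast}(x)$ is the unique relevant $\hat\theta^q$-preimage (uniqueness once more from the nested cylinder structure), which satisfies $\hat d(y^{\ast\ast}(x),y^{\ast\ast}(x'))=e^{-q}\hat d(x,x')$.

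The substance of the proof is then the bookkeeping. From $|P_u^qg|\le P^q|g|$ and the single-preimage structure, $\|P_u^q\big(P^{\,q-m_0}1_{\hat A}\big)\|_\infty\le C^2\rho^{\,2q-m_0}$. For the H\"older seminorm on $\hat B'\times\hat B'$, the contraction factor $e^{-\eta q}$ supplied by $y^{\ast\ast}$ exactly absorbs the bad $e^{\eta(q+m_0)}$ of the previous step, the weight $e^{S_qh(y^{\ast\ast})}$ contributes $\lesssim\rho^q$, and the phases $e^{iu\Psi_q(y^{\ast\ast})}$ contribute only a $q$-independent amount, because $\psi$ is locally constant and bounded (so $\Psi_q(y^{\ast\ast}(x))$ and $\Psi_q(y^{\ast\ast}(x'))$ can differ only when $x,x'$ lie at a \emph{fixed} minimal distance $\asymp e^{-2m_0}$ apart); the jump of $1_{\hat B'}$ now sits at the fixed scale $e^{-m_0}$ and contributes $\lesssim\rho^{\,2q-m_0}$. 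Combining, $\|P_u^qP^q(1_{\hat A}\circ\hat\theta^{m_0})\|_{\mathcal B}\le C_\eta\,\rho^{\,2q-m_0}$, which is the claim with $\gamma'=-\log\rho$. The ``in particular'' statement then follows by taking $u=0$ (so $P_0=P$) and using $\hat\theta$-invariance of $\hat\nu$ together with $\int P^{2q}f\,d\hat\nu=\int f\,d\hat\nu$ to write $\hat\nu(\hat A)=\int P^{2q}(1_{\hat A}\circ\hat\theta^{m_0})\,d\hat\nu\le\|P^{2q}(1_{\hat A}\circ\hat\theta^{m_0})\|_\infty$.

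The main obstacle is exactly this last accounting: one must keep every estimate as an honest function of $q$ rather than hide $q$-dependence in constants, and use the fact that the total number of (weighted) shifts, $\asymp 2q-m_0$, exceeds the depth of $\hat A$ by $\asymp m_0$, so that the final function is supported on a cylinder whose depth — hence the scale of its only discontinuity — no longer grows with $q$; this is what makes the $e^{\eta(q+m_0)}$ blow-up of the intermediate H\"older seminorm cancel against the $e^{-\eta q}$ contraction gained from the extra iterations. The parameter $u$, and the non-arithmeticity hypothesis, play essentially no role here — the estimate is the same as for the plain Perron--Frobenius operator.
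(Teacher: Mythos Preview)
Your proof is correct and follows essentially the same route as the paper: both reduce $P_u^qP^q(1_{\hat A}\circ\hat\theta^{m_0})$ to the explicit single-branch expression $1_{\hat\theta^{2q-m_0}\hat A}(y)\,e^{S_{2q-m_0}h(w_y)+iuS_q\psi(\hat\theta^{q-m_0}w_y)}$ (the paper writes this in one line, you reach it in two stages via the intermediate $P^{q-m_0}1_{\hat A}$), then bound the sup-norm by $e^{(2q-m_0)\max h}$ and split the H\"older estimate into the case $x,x'\in\hat B'$ (where the backward-branch contraction controls the ergodic sums of $h$ and $\psi$) and the jump across $\partial\hat B'$ at the fixed scale $e^{-m_0}$. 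Your emphasis on the intermediate H\"older blow-up $e^{\eta(q+m_0)}$ and its cancellation by the $e^{-\eta q}$ contraction is a nice conceptual gloss on exactly what the paper's direct calculation is doing, and your observation that $u$ and non-arithmeticity play no role is spot on.
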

\begin{proof}
\begin{eqnarray*}
P_{u}^{q}P^{q}(1_{\hat{A}}\circ\hat{\theta}^{m_{0}})(y)
&=&P^{q}\left(e^{iuS_{q}\psi}P^{q-m_{0}}(1_{\hat{A}})\right)(y)\\
&=&\sum_{w:\hat{\theta}^{2q-m_{0}}w=y}e^{S_{2q-m_{0}}h(w)}1_{\hat{A}}(w)e^{iuS_{q}\psi\hat{\theta}^{q-m_{0}}(w)}\\
&=&1_{[\hat{\theta}^{2q-m_{0}}\hat{A}]}(y)e^{S_{2q-m_{0}}h(w_{y})}e^{iuS_{q}\psi\hat{\theta}^{q-m_{0}}(w_{y})}.
\end{eqnarray*}
where $w_{y}\in \hat{A}$ is the unique element such that $\hat{\theta}^{2q-m_{0}}w_{y}=y$ (it exists if $y\in\hat{\theta}^{2q-m_{0}}\hat{A}$). From this later formula, we can obtain that $|||P_{u}^{q}P^{q}(1_{\hat{A}}\circ\hat{\theta}^{m_{0}})||_{\infty}\leq e^{\max h(2q-m_{0})}$, where $\max h<0$.\\
Now a step to compute the norm $||.||_{\mathcal{B}}$ is to estimate the $\eta-$H$\ddot{o}$lder coefficient. Let $x\neq y\in \hat{\Sigma}$, we know that $\hat{d}(x,y)=e^{-n}$, for some $n\in \mathbb{N}^{*}$. We will consider two cases: the first case when $n>m_{0}$, we note the equivalence $x\in \hat{\theta}^{2q-m_{0}}\hat{A} \Leftrightarrow y\in \hat{\theta}^{2q-m_{0}}\hat{A}$. Thus, either $x,y\notin \hat{\theta}^{2q-m_{0}}\hat{A}$ and hence
\begin{equation*}
|P_{u}^{q}P^{q-m_{0}}(1_{\hat{A}})(y)-P_{u}^{q}P^{q-m_{0}}(1_{\hat{A}})(x)|=0.
\end{equation*}
Or $x,y\in \hat{\theta}^{2q-m_{0}}\hat{A}$, so that $\hat{d}(w_{x},w_{y})=2q-m_{0}+n$. Let us denote for simplicity $F_{h,\psi}=S_{2q-m_{0}}h(.)+iuS_{q}\psi\circ\hat{\theta}^{q-m_{0}}(.)$. Introducing the ergodic sum formula, we get:
\begin{equation*}
|S_{2q-m_{0}}h(w_{y})-S_{2q-m_{0}}h(w_{x})|
\leq\sum_{i=0}^{2q-m_{0}-1}|h|_{\eta}e^{-\eta(2q-m_{0}+n-i)}\\
\leq c |h|_{\eta}\hat{d}^{\eta}(x,y),
\end{equation*}
where $c$ is a constant such that $\sum_{j\geq 1}e^{-\alpha j}\leq c<\infty$. And in the same way for $S_{q}\psi(\hat{\theta}^{q-m_{0}}(.))$, we can see that:
\begin{equation*}
|S_{q}\psi(\hat{\theta}^{q-m_{0}}(w_{y}))-S_{q}\psi(\hat{\theta}^{q-m_{0}}(w_{x}))|
\leq c |\psi|_{\eta}\hat{d}^{\eta}(x,y).
\end{equation*}
Thus, from these computations, we verify that:
\begin{eqnarray*}
|P_{u}^{q}P^{q-m_{0}}(1_{\hat{A}}(y))-P_{u}^{q}P^{q-m_{0}}(1_{\hat{A}}(x))|
&=&\mid e^{F_{h,\psi}(w_{y})}-e^{F_{h,\psi}(w_{x})}\mid\\
&\leq&e^{\max h (2q-m_{0})}c(|h|_{\eta}+|\psi|_{\eta})\hat{d}^{\eta}(x,y).
\end{eqnarray*}
Now, we treat the second case where $n\leq m_{0}$. Here, if $x\in\hat{\theta}^{2q-m_{0}}\hat{A}$, then $y\notin \hat{\theta}^{2q-m_{0}}\hat{A}$,
\begin{eqnarray*}
|P_{u}^{q}P^{q-m_{0}}(1_{\hat{A}})(y)-P_{u}^{q}P^{q-m_{0}}(1_{\hat{A}})(x)|
&\leq&\sup{|P_{u}^{q}P^{q-m_{0}}(1_{\hat{A}})|}\\
&\leq&\underset{w\in \hat{A}}{ \sup}|e^{S_{2q-m_{0}}h(w)+iu S_{q}\psi\circ\hat{\theta}^{q-m_{0}}(w)}|e^{\eta n}e^{-\eta n}\\
&\leq& e^{\max h(2q-m_{0})}e^{\eta m_{0}}{\hat{d}}^{\eta}(x,y).
\end{eqnarray*}
{}From all this process, setting $\gamma^{'}:=\min(\eta,-\max h)>0$, we get an estimation for the $\eta$-H\"{o}lder coefficient, $\forall n\geq0$:
\begin{equation*}
|P_{u}^{q}P^{q-m_{0}}(1_{\hat{A}})|_{\eta}\leq e^{-\gamma^{'}(2q-m_{0})}\max\left(e^{\eta m_{0}}, c(|h|_{\eta}+|\psi|_{\eta})\right)
\end{equation*}
Hence, for $C_{\eta}:=(1+\max\left(e^{\eta m_{0}}, c(|h|_{\eta}+|\psi|_{\eta})\right))$, we deduce that
\begin{equation*}
||P_{u}^{q}P^{q-m_{0}}(1_{\hat{A}})||_{\mathcal{B}}\leq C_{\eta}e^{-\gamma^{'}(2q-m_{0})}.
\end{equation*}
\end{proof}
 Next proposition is a two-dimensional version of Proposition 13 in \cite{penesaussol}. We give a more precise error term in order to accomodate the one-dimensional case. It may be viewed as a doubly local version of the central limit theorem: first, it is local in the sense that we are looking at the probability that $S_{n}\varphi=0$ while the classical central limit theorem is only concerned with the probability that $|S_{n}\varphi|\leq \epsilon \sqrt{n}$; second, it is local in the sense that we are looking at this probability conditioned to the fact that we are starting from a set $A$ and landing on a set $B$ on the base.
\begin{proposition}\label{p3}
There exist real numbers $C_{1}>0$ and $\gamma>0$ such that, for all integers $n$, $q$, $k$ such that $n-2k> 0$ and all $m_{0}<q\leq k$, all two-sided $q$-cylinders $A$ of $\Sigma$ and all measurable subset $B$ of $\hat{\Sigma}$, we have:
\begin{equation*}
\bigg|\nu\left(A\cap\{S_{n}\varphi=0\}\cap \theta^{-n}(\theta^{k}(\Pi^{-1}(B)))\right)-\frac{\nu(A)\hat{\nu}(B)}{ \sqrt{n-k}\sigma_{\varphi}}\bigg| \leq C_{1}\frac{\hat{\nu}(B)k^{2}e^{-\gamma q}}{n-2k}.
\end{equation*}
\end{proposition}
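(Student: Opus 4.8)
The plan is the classical perturbative (Nagaev-type) argument for local limit theorems, combined with Fourier inversion to encode the resonance condition $\{S_{n}\varphi=0\}$, set up so that Lemma~\ref{lemma LLT} supplies the decay in $q$ and Proposition~\ref{p2} supplies the Gaussian leading term. First I reduce to the one-sided shift: since $\theta$ is invertible on $\Sigma$, $\theta^{-n}(\theta^{k}(\Pi^{-1}(B)))=\{w:\hat\theta^{n-k}(\Pi w)\in B\}$, and using $\nu$-invariance I compose everything with $\theta^{q-1}$, so that the two-sided $q$-cylinder $A$ becomes $\Pi^{-1}(\hat A)$ for a one-sided cylinder $\hat A$ of depth $\approx 2q$ with $\hat\nu(\hat A)=\nu(A)$ --- this is why Lemma~\ref{lemma LLT} is phrased for $2q$-cylinders --- the occurrence in $B$ is read at a time $m\approx n-k$, and, using $\psi\circ\Pi=\varphi\circ\theta^{m_0}$ and the fact that $\varphi$ is constant on $m_0$-cylinders, $S_n\varphi$ becomes $S_n\psi\circ\hat\theta^{\ell}\circ\Pi$ with $0\le\ell<q$. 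Since $S_n\varphi$ is $\mathbb Z$-valued, Fourier inversion $\mathbf 1_{\{j=0\}}=\frac1{2\pi}\int_{-\pi}^{\pi}e^{iuj}\,du$ turns the quantity to estimate into $\frac1{2\pi}\int_{-\pi}^{\pi}\big(\int_{\hat\Sigma}\mathbf 1_{\hat A}\,e^{iuS_n\psi\circ\hat\theta^{\ell}}\,\mathbf 1_{B}\circ\hat\theta^{m}\,d\hat\nu\big)\,du$.

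Next I express the inner integral through the operators $P$ and $P_u$. I split the ergodic sum $S_n\psi\circ\hat\theta^{\ell}$ into a block of length $\approx q$ adjacent to the cylinder, a central block of length $m':=n-k-q$ (so $m'\ge n-2k$), and a block of length $\approx k$ adjacent to $B$; then repeated use of $\int f\,(g\circ\hat\theta^{j})\,d\hat\nu=\int (P^{j}f)\,g\,d\hat\nu$ and $P^{j}(e^{iuS_{j}\psi}f)=P_u^{j}f$ brings the inner integral to the form $\int_{\hat\Sigma}P_u^{m'}(g_u)\,e^{iuS_{k'}\psi}\,\mathbf 1_{B}\,d\hat\nu$ with $g_u:=P_u^{q'}P^{\ell}(\mathbf 1_{\hat A})$ and $q'\approx q$, $k'\approx k$. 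The key input is Lemma~\ref{lemma LLT} (in its stated form, and its $u=0$ instance): $\|g_u\|_{\mathcal B}\le C e^{-\gamma q}$ uniformly in $u\in[-\pi,\pi]$, while $\hat\nu(g_0)=\hat\nu(\mathbf 1_{\hat A})=\nu(A)$. This smoothing of the cylinder indicator by a large power of $P$ is indispensable, because $\|\mathbf 1_{\hat A}\|_{\mathcal B}$ itself is exponentially large.

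Now I split the $u$-integral at $|u|=\beta$. On $[-\pi,\pi]\setminus[-\beta,\beta]$, Proposition~\ref{p1} gives $\|P_u^{m'}g_u\|_{\mathcal B}\le C\alpha^{m'}e^{-\gamma q}$, so this part contributes $O(\alpha^{n-2k}e^{-\gamma q}\hat\nu(B))\le C\,k^{2}e^{-\gamma q}\hat\nu(B)/(n-2k)$, since $x\mapsto x\alpha^{x}$ is bounded. On $[-\beta,\beta]$ I insert the decomposition $P_u^{m'}=\lambda_u^{m'}\varphi_u(\cdot)v_u+N_u^{m'}$ of Proposition~\ref{p2}; the $N_u^{m'}$-term is estimated as the previous block, and the principal part is $\frac1{2\pi}\int_{-\beta}^{\beta}\lambda_u^{m'}\,a(u)\,b(u)\,du$ with $a(u)=\varphi_u(g_u)$, $b(u)=\int v_u\,e^{iuS_{k'}\psi}\mathbf 1_{B}\,d\hat\nu$. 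The initial values $v_0=\mathds 1$, $\varphi_0=\hat\nu$, $\lambda_0=1$, $\lambda_0'=0$, $\lambda_0''=-\sigma_\varphi^{2}$ of Proposition~\ref{p2} give $a(0)=\nu(A)$, $b(0)=\hat\nu(B)$; the $C^{1}$-regularity of $u\mapsto(\lambda_u,v_u,\varphi_u)$ together with $|e^{iuS_{k'}\psi}-1|\le C|u|k$ and the telescoping bound $\|P_u^{q'}-P^{q'}\|\le C|u|q$ yield $|a(u)|\le Ce^{-\gamma q}$, $|b(u)|\le C\hat\nu(B)$, and $|a(u)b(u)-a(0)b(0)|\le C|u|\,k\,e^{-\gamma q}\hat\nu(B)$. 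Writing $a(u)b(u)=a(0)b(0)+\big(a(u)b(u)-a(0)b(0)\big)$ and using $|\lambda_u|\le e^{-c_1u^{2}}$ with $\int_{\mathbb R}|u|e^{-c_1m'u^{2}}\,du=O(1/m')$, the remainder contributes $O(k\,e^{-\gamma q}\hat\nu(B)/m')\le C\,k^{2}e^{-\gamma q}\hat\nu(B)/(n-2k)$; and since $\lambda_u^{m'}=e^{-m'\sigma_\varphi^{2}u^{2}/2}\big(1+O(m'|u|^{3})\big)$, extending the Gaussian integral to $\mathbb R$ (with exponentially small error) gives $a(0)b(0)\big(\tfrac1{\sqrt{2\pi m'}\,\sigma_\varphi}+O(1/m')\big)$. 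Replacing $m'=n-k-q$ by $n-k$ (cost $O(q/(n-k)^{3/2})$) and collecting the errors --- each carrying $e^{-\gamma q}\hat\nu(B)$ by Lemma~\ref{lemma LLT} and a factor $\le k^{2}/(n-2k)$ --- reproduces the main term $\nu(A)\hat\nu(B)/(\sqrt{n-k}\,\sigma_\varphi)$ and the claimed error bound.

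I expect the main difficulty to be the bookkeeping in the first two steps: aligning the shifts so that the two-sided $q$-cylinder lands as a one-sided cylinder of depth $\approx 2q$ in precisely the configuration that Lemma~\ref{lemma LLT} controls, and arranging the three blocks so that the cylinder indicator is acted on by $P$ (at least $\approx 2q$ times) \emph{before} the spectral decomposition of Proposition~\ref{p2} is applied --- otherwise its exponentially large $\mathcal B$-norm is uncontrollable. Once this is in place the rest is a routine local limit theorem computation; the only care required is to ensure that every error term retains both the $\hat\nu(B)$ factor (obtained by integrating $\mathbf 1_{B}$ against a function bounded in sup norm) and the $e^{-\gamma q}$ factor (from Lemma~\ref{lemma LLT}), and that the worst power of $k$ produced by the two end blocks is absorbed into $k^{2}/(n-2k)$.
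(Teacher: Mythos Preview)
Your proposal is correct and follows essentially the same approach as the paper: Fourier inversion on the integer condition $\{S_n\varphi=0\}$, reduction to the one-sided shift so that the two-sided $q$-cylinder becomes a one-sided $\approx 2q$-cylinder handled by Lemma~\ref{lemma LLT}, expression of the resulting integral through powers of $P_u$, the split at $|u|=\beta$ with Proposition~\ref{p1} controlling the large-$|u|$ range and the spectral decomposition of Proposition~\ref{p2} on $[-\beta,\beta]$, and finally the Gaussian integral for the leading term with the remaining errors collected into $k^{2}e^{-\gamma q}\hat\nu(B)/(n-2k)$. The only cosmetic differences are in the exact alignment shift (the paper composes with $\theta^{-q-m_0}$ rather than $\theta^{q-1}$) and the labeling of the central block length (the paper's $l=n-(k-m_0)-q$ versus your $m'=n-k-q$), but these do not affect the argument.
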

\begin{proof}
Set $Q:=A\cap\{S_{n}\varphi=0\}\cap \theta^{-n}(\theta^{k}(\Pi^{-1}(B))$. The proof of the proposition will be illustrated in  estimating the measure of the set $Q$.\\
Since $\varphi\circ \theta^{m_{0}}=\psi \circ \Pi$ and using the semi-conjugacy $\hat{\theta}\circ\Pi=\Pi\circ\theta$, we have the identity: $\{S_{n}\varphi \circ \theta^{m_{0}}=0\}=\{S_{n}\psi\circ \Pi=0\}$. In addition, $Im(\psi)\in\mathbb{Z}$, thus we have:
\begin{equation*}
1_{\theta^{-q-m_{0}}Q}=\left(1_{\hat{A}}\circ\hat{\theta}^{m_{0}}.1_{B}\circ\hat{\theta}^{q+n-(k-m_{0})}.\frac{1}{2\pi}\int e^{iu.S_{n}\psi\circ\hat{\theta}^{q}}du\right)\circ\Pi,
\end{equation*}
with $\hat{A}:=\Pi \theta^{-q}A$( indeed $\theta^{-q}A=\Pi^{-1}\hat{A}$ since $A$ is a $q$-cylinder).
Since the measure $\nu$ is $\theta$-invariant, then we can verify that:
\begin{equation*}
\nu(Q)=\frac{1}{2\pi}\int_{[-\pi,\pi]}\mathbb{E}_{\hat{\nu}}\left(1_{\hat{A}}\circ\hat{\theta}^{m_{0}}.1_{B}\circ\hat{\theta}^{q+n-(k-m_{0})} e^{iu.S_{n}\psi\circ\hat{\theta}^{q}}\right)du.
\end{equation*}
Now we want to estimate the expectation $a(u)=\mathbb{E}_{\hat{\nu}}(...)$. Introducing the Perron-Frobenius operator P, and using the fact that it is the dual of $\hat{\theta}$, we get:
\begin{eqnarray*}
a(u)&=&\mathbb{E}_{\hat{\nu}}\left( P^{q}(1_{\hat{A}}\circ\hat{\theta}^{m_{0}})\exp(iu.S_{n}\psi)1_{B}\circ\hat{\theta}^{n-(k-m_{0})}\right)\\
&=&\mathbb{E}_{\hat{\nu}}\left( P^{n}_{u}\left(P^{q}(1_{\hat{A}}\circ\hat{\theta}^{m_{0}})1_{B}\circ\hat{\theta}^{n-(k-m_{0})} \right)\right)\\
&=&\mathbb{E}_{\hat{\nu}}\left( P^{k-m_{0}}_{u}(1_{B}P_{u}^{n-(k-m_{0})}P^{q}(1_{\hat{A}}\circ\hat{\theta}^{m_{0}}))\right).
\end{eqnarray*}
We will treat two cases concerning the values of $u$. Let us denote for simplicity $l:=n-(k-m_{0}-q)$. First, using the contraction inequality given in Proposition \ref{p1} applied to $P^{l}_{u}(1)$, the fact that $||P_{u}^{q}P^{q}(1_{\hat{A}}\circ\theta^{m_{0}})||_{B}\leq e^{-\gamma^{'} (2q-m_{0})}$ from Lemma \ref{lemma LLT}, and the fact that $\bigg|\mathbb{E}\left(P^{k-m_{0}}_{u}(1_{B}g)\right)\bigg|\leq \hat{\nu}(B)||g||_{\mathcal{B}}$, we will show that $a(u)$ is negligible for large values of $u$, so when $u\notin[-\beta, \beta]$ we get for $\gamma=2\gamma^{'}$:
\begin{equation*}\label{3}
|a(u)|=\bigg|\mathbb{E}_{\hat{\nu}}\left( P^{k-m_{0}}_{u}(1_{B}P_{u}^{n-(k-m_{0})-q}P_{u}^{q}P^{q}(1_{\hat{A}}\circ\hat{\theta}^{m_{0}}))\right)\bigg|\\
=O\left(\hat{\nu}(B)\alpha^{l}e^{-\gamma q}\right). 
\end{equation*}
We now use the decomposition in \ref{p2} to obtain an estimation of the main term coming from small values of u. Indeed, whenever $u\in[-\beta,\beta]$, we have:
\begin{eqnarray*}
a(u)
&=&\mathbb{E}_{\hat{\nu}}\left( P^{k-m_{0}}_{u}(1_{B}P_{u}^{l}P_{u}^{q}P^{q}(1_{\hat{A}}\circ\hat{\theta}^{m_{0}}))\right)\\
&=&\lambda_{u}^{l}\varphi_{u}(P_{u}^{q}P^{q}(1_{\hat{A}}\circ\hat{\theta}^{m_{0}}))\mathbb{E}_{\hat{\nu}}\left(P_{u}^{k-m_{0}}(1_{B}v_{u})\right)+\mathbb{E}_{\hat{\nu}}\left(P^{k-m_{0}}_{u}(1_{B}N_{u}^{l}(P_{u}^{q}P^{q}(1_{\hat{A}}\circ\hat{\theta}^{m_{0}}))\right)\\
&=&a_{1}(u) + a_{2}(u).
\end{eqnarray*}
Using inequality $(1)$ in Proposition \ref{p2}, one can see that the second term is of order
\begin{equation}\label{4}
a_{2}(u)=O(\hat{\nu}(B)\alpha^{l}e^{-\gamma q}).
\end{equation}
The mappings $u \mapsto v_{u}$ and $u\mapsto \phi_{u}$ are $C^{1}$-regular with $v_{0}=1$ and $\varphi_{0}=\hat{\nu}$, from which we find that:
\begin{eqnarray*}
a_{1}(u)
&=&\lambda^{l}_{u}\hat{\nu}(P_{u}^{q}P^{q}(1_{\hat{A}}\circ\hat{\theta}^{m_{0}}))\mathbb{E}_{\hat{\nu}}\left(P_{u}^{k-m_{0}}(1_{B})\right) +\lambda^{l}_{u}\hat{\nu}(P_{u}^{q}P^{q}(1_{\hat{A}}\circ\hat{\theta}^{m_{0}}))\mathbb{E}_{\hat{\nu}}\left(P_{u}^{k-m_{0}}(1_{B}O(u))\right)\\ & &+\lambda^{l}_{u}O(u)(P_{u}^{q}P^{q}(1_{\hat{A}}\circ\hat{\theta}^{m_{0}}))\mathbb{E}_{\hat{\nu}}\left(P_{u}^{k-m_{0}}(1_{B})\right) +\lambda^{l}_{u}O(u)(P_{u}^{q}P^{q}(1_{\hat{A}}\circ\hat{\theta}^{m_{0}}))\mathbb{E}_{\hat{\nu}}\left(P_{u}^{k-m_{0}}(1_{B})O(u))\right)
\end{eqnarray*}
To obtain an approximation of the first term $a_{1}(u)$, we introduce the formula of $P$ in $P_{u}$:
\begin{eqnarray*}
\bigg|\mathbb{E}_{\hat{\nu}}\left(P_{u}^{k-m_{0}}(1_{B})\right)-\hat{\nu}(B)\bigg|
&=&|\mathbb{E}_{\hat{\nu}}\left(P^{k-m_{0}}(e^{iu.S_{k-m_{0}}\psi}-1)1_{B} \right)|\\
&\leq&||e^{iu.S_{k-m_{0}}\psi}-1||_{\infty}||1_{B}||_{L^{1}(\hat{\nu})}\\
&\leq& |u|.(k-m_{0})||\psi||_{\infty}\hat{\nu}(B),
\end{eqnarray*}
so that, from this approximation, we get:
\begin{eqnarray*}
a_{1}(u)
&=&\lambda_{u}^{l}\hat{\nu}(P^{q}_{u}P^{q}(1_{\hat{A}}\circ\hat{\theta}^{m_{0}}))\mathbb{E}_{\hat{\nu}}\left(P_{u}^{k-m_{0}}(1_{B})\right)+O(\lambda^{l}_{u}|u|\hat{\nu}(B)e^{-\gamma q})\\
&=&\lambda_{u}^{l}\hat{\nu}(\hat{A})\hat{\nu}(B)\left(1+O(|u|q)\right)\left(1+O(\mid u\mid(k-m_{0})\right)+O(\lambda^{l}_{u}|u|\hat{\nu}(B)e^{-\gamma q})\\
&=&\lambda_{u}^{l}\hat{\nu}(\hat{A})\hat{\nu}(B)+O(\lambda^{l}_{u}|u|\hat{\nu}(B)k^{2}e^{-\gamma q}).
\end{eqnarray*}
Using Proposition \ref{p2} and that $u\mapsto \lambda_{u}$ belongs to $C^{3}([-\beta;\beta] \rightarrow \mathbb{C})$, hence applying the intermediate value theorem
 gives:
\begin{eqnarray*}
|\lambda_{u}^{l}-e^{-\frac{l}{2}\sigma^{2}_{\varphi}u^{2}}|
&\leq&l(e^{-c_{1}|u|^{2}})^{l-1}|\lambda_{u}-e^{-\frac{1}{2}\sigma^{2}_{\varphi}u^{2}}|\\
&=&le^{-c_{1}l|u|^{2}}e^{c_{1}|u|^{2}}O(|u|^{3})\\
&=&C_{0}\left(l|u|^{2}e^{-c_{1}l|u|^{2}}\right)e^{c_{1}|u|^{2}}|u|\\
&=&O(e^{-c_{2}l|u|^{2}}|u|),\quad \text{ for the constant }  c_{2}=c_{1}/2.
\end{eqnarray*}
As a consequence, an estimate for $a_{1}(u)$ is:
\begin{equation*}
a_{1}(u)=e^{-\frac{l}{2}\sigma^{2}_{\varphi}u^{2}}\hat{\nu}(\hat{A})\hat{\nu}(B)+ O(e^{-c_{2}l|u|^{2}}|u|\hat{\nu}(B)ke^{-\gamma q}),
\end{equation*}
since $\hat{\nu}(\hat{A})=O(e^{-\gamma^{'}(2q-m_{0})})$.
A final step to reach an estimation of $\nu(Q)$ is to integrate the approximated quantity of $a_{1}(u)$ obtained above. Using the Gaussian integral, a change of variable $v=u\sqrt{l}$ gives:
\begin{equation*}
\int_{[-\beta,\beta]}e^{-\frac{l}{2}\sigma^{2}_{\varphi}u^{2}}du
=\frac{1}{\sqrt{l}}\frac{2\pi}{\sigma_{\varphi}} + O\left(\frac{1}{l}\right)
\end{equation*}
In the same way we treat the error term to get:
\begin{equation*}
\int_{[-\beta,\beta]}|u|e^{-c_{2}l|u|^{2}}du
=\frac{1}{l}\int_{[-\beta\sqrt{l},\beta\sqrt{l}]}|v|e^{-c_{2}|v|^{2}}dv\\
=O\left(\frac{1}{l}\right).
\end{equation*}
From these computations, it follows that:
\begin{equation*}
\int_{[-\beta,\beta]}a_{1}(u)du
=\frac{2\pi}{\sqrt{l\sigma^{2}_{\varphi}}}\hat{\nu}(A)\hat{\nu}(B) + O\left(\frac{\hat{\nu}(B)k^{2}e^{-\gamma q}}{l}\right).
\end{equation*}
From this main estimate and (\ref{3}) and (\ref{4}) we conclude that:
\begin{equation*}
\nu(Q)
=\frac{1}{2\pi}\int_{[-\pi,\pi]}a(u)du\\
=\frac{1}{{\sqrt{n-k}\sigma_{\varphi}}}\hat{\nu}(A)\hat{\nu}(B) + O\left(\frac{\hat{\nu}(B)k^2e^{-\gamma q}}{n-2k}\right)
\end{equation*}
\end{proof}
\subsection{Proof of the pointwise convergence of the recurrence rate to the dimension. }
\hspace{2mm} Let us denote by $G_{n}(\epsilon)$ the set of points for which $n$ is an $\epsilon$-return :
\begin{equation*}
G_{n}(\epsilon):= \{x\in \Sigma : S_{n}\varphi(x)=0 \text{ and }d(\theta^{n}(x),x)<\epsilon\}.
\end{equation*}
Let us consider the first return time in an $\epsilon$-neighborhood of a starting point $x\in \Sigma$ :
\begin{equation*}
\tau_{\epsilon}(x):= \inf\{m\geq 1 : S_{m}\varphi(x)=0 \text{ and }d(\theta^{m}(x),x)<\epsilon\}=\inf\{m\geq1 : x\in G_{m}(\epsilon)\}.
\end{equation*}
\begin{proof}[Proof of Theorem \ref{theorem2.1.1}]Let us denote by $\mathcal{C}_{k}$ the set of two-sided $k$-cylinders of $\Sigma$. For any $\delta>0$ denote by $\mathcal{C}_{k}^{\delta}\subset\mathcal{C}_{k}$ the set of cylinders $C \in \mathcal{C}_{k}$ such that $\nu(C)\in (e^{-(d+\delta)k},e^{-(d-\delta)k})$. For any $x\in \Sigma$, let $C_{k}(x)\in \mathcal{C}_{k}$ be the $k$-cylinder which contains $x$. Since $d$ is twice the entropy of the ergodic measure $\nu$, by the Shannon-Breiman theorem, the set $K_{N}^{\delta}=\{x\in \Sigma: \forall k \geq N , C_{k}(x)\in  \mathcal{C}^{\delta}_{k}\}$ has a measure $\nu(K_{N}^{\delta})> 1-\delta$ provided $N$ is sufficiently large.
\begin{itemize}
\item First, let us prove that, almost surely :
\begin{equation*}
\underset{\epsilon \rightarrow 0}{\liminf}\frac{\log\sqrt{\tau_{\epsilon}}}{-\log\epsilon}\geq d.
\end{equation*}
Let $\alpha > \frac{1}{d}$ and $0<\delta<d-\frac{1}{\alpha}$. Let us take $\epsilon_{n}:=n^{-\frac{\alpha}{2}}$ and $k_{n}:=\lceil-\log \epsilon_{n} \rceil$. In view of Proposition~\ref{p3}, whenever $k_{n}\geq N$, we have :
\begin{eqnarray*}
\nu(K_{N}^{\delta}\cap G_{n}(\epsilon_{n}))
&=&\nu\left(\{x\in K_{N}^{\delta}:  S_{n}\varphi(x)=0 \text{ and } \theta^{n}(x)\in C_{k_{n}}(x)\}\right)\\
&=&\sum_{C\in \mathcal{C}^{\delta}_{k_{n}}}\nu(C\cap \{S_{n}\varphi=0\}\cap\theta^{-n}(C))\\
&=&\sum_{C\in \mathcal{C}^{\delta}_{k_{n}}}\frac{\nu(C)^{2}}{\sigma_{\varphi}\sqrt{n}} + O\left(\frac{\nu(C)k_{n}^{2}e^{-\gamma k_{n}}}{n-2k_{n}} \right).
\end{eqnarray*}
Notice that for $\epsilon_{n}$ and $k_{n}$ taken as above, one can verify that the term $\frac{k_{n}^{2}e^{-\gamma k_{n}}}{n-2k_{n}}=O(n^{-1-\frac{\gamma\alpha}{2}}(\log n)^{2})$. In addition, for $C\in C_{k_{n}^{\delta}}$, $\nu(C)\leq n^{-\frac{\alpha(d-\delta)}{2}}$, from which it follows that
\begin{equation*}
\nu(K_{N}^{\delta}\cap G_{n}(\epsilon_{n}))
=O\left(\frac{(\log n)^{2}}{n^{\min\left(1+\frac{\gamma\alpha}{2},1+\frac{\alpha(d-\delta)}{2}\right)}}\right)
\end{equation*}
but $\frac{1+\alpha(d-\delta)}{2}>1$, so $\sum_{n}\nu(K_{N}^{\delta}\cap G_{n}(\epsilon_{n}))< \infty$.\\
Hence by the Borel Cantelli lemma, for a.e. $x\in K_{N}^{\delta}$, if n is large enough, we have  $\tau_{\epsilon_{n}}> n$, which in turn implies that :
\begin{equation*}
\underset{n\rightarrow \infty}{\liminf}\frac{\log\sqrt{\tau_{\epsilon_{n}}}}{-\log\epsilon_{n}}\geq \frac{1}{\alpha} \quad a.e.,
\end{equation*}
and this proves the the lower bound on the $\liminf$, since $(\epsilon_{n})_{n}$ decreases to zero and $\underset{n\rightarrow+\infty}{\liminf}\frac{\epsilon_{n}}{\epsilon_{n+1}}=1$, and since we have taken an arbitrary $\alpha>\frac{1}{d}$.
\item Next, we will prove the upper bound $(d)$ on the lim sup :
\begin{equation*}
\underset{\epsilon\rightarrow 0}{\limsup}\frac{\log\sqrt{\tau_{\epsilon}}}{-\log\epsilon}\leq d.
\end{equation*}
let $\alpha \in(0,\frac{1}{d})$ and $\delta>0$ such that $1-\alpha d-\alpha\delta>0$. Take $\epsilon_{n}:=n^{-\frac{\alpha}{2}}$ and $k_{n}:=\lceil-\log \epsilon_{n} \rceil$. We define for all $l=1,...,n$, the sets $ A_{l}(\epsilon):=G_{l}(\epsilon)\cap \theta^{-l}\{\tau_{\epsilon}>n-l\}$ which are pairwise disjoint. Setting $L_{n}:=\lceil n^{a}\rceil$ with $a>\alpha(d+\delta -\gamma)$, we then realize that:
\begin{equation} \label{eqnAl}
1\geq \sum_{l=0}^{n}\nu(A_{l}(\epsilon_{n}))\geq \sum_{l=L_{n}}^{n}\sum_{C\in\mathcal{C}^{\delta}_{k_{n}}}\nu(C\cap A_{l}(\epsilon_{n})).
\end{equation}
But due to Proposition \ref{p3}, for any $C\in\mathcal{C}^{\delta}_{k_{n}}$ and $l\geq L_{n}$, whenever $k_{n}\geq N$, we have :
\begin{eqnarray*}
\nu(C\cap A_{l}(\epsilon_{n}))
&=&\nu(C\cap\{S_{l}\varphi=0\}\cap\theta^{-l}(C\cap\{\tau_{\epsilon_{n}}>n-l\}))\\
&=&\frac{\nu(C)\nu(C\cap\{\tau_{\epsilon_{n}}>n-l\})}{\sigma_{\varphi}\sqrt{l-k_{n}}} + O\left(\frac{\nu(C\cap\{\tau_{\epsilon_{n}}>n-l\})k^{2}_{n}e^{-\gamma k_{n}}}{l}\right)\\
&\geq& c \epsilon_{n}^{d+\delta}\frac{1}{\sqrt{l}}\nu(C\cap\{\tau_{\epsilon_{n}}>n-l\}).
\end{eqnarray*}
Indeed, the error is negligible, because for $a>\alpha(d+\delta -\gamma)$, $\frac{k^{2}_{n}e^{-\gamma k_{n}}}{\sqrt{l}}=O(\epsilon^{d+\delta}_{n})$.\\
Now, note that:
\begin{equation*}
\nu\left(K_{N}^{\delta}\cap\{\tau_{\epsilon_{n}}>n\}\right)\leq \sum_{C\in \mathcal{C}^{\delta}_{k_{n}}}\nu\left(C\cap\{\tau_{\epsilon_{n}}>n\}\right).
\end{equation*}
Next, we will work to prove that $\nu\left(K_{N}^{\delta}\cap\{\tau_{\epsilon_{n}}>n\}\right)$ is summable.\\
Observe that:
\begin{equation*}
\sum_{l=L_{n}}^{n}\nu(C\cap A_{l}(\epsilon_{n}))
\geq c\epsilon^{d+\delta}_{n}\nu(C\cap\{\tau_{\epsilon_{n}}>n\})\left(\sqrt{n}-\sqrt{L_{n}}\right).
\end{equation*}
But, from  \eqref{eqnAl}, it follows immediately that
\begin{equation*}
1\geq \sum_{C\in \mathcal{C}^{\delta}_{k_{n}}}\sum_{l=L_{n}}^{n}\nu(C\cap A_{l}(\epsilon_{n}))\geq \sum_{C\in \mathcal{C}^{\delta}_{k_{n}}}c\epsilon^{d+\delta}_{n}\nu(C\cap\{\tau_{\epsilon_{n}}>n\})\left(\sqrt{n}-\sqrt{L_{n}}\right)
\end{equation*}
from which one gets
\begin{equation*}
\nu\left(K_{N}^{\delta}\cap\{\tau_{\epsilon_{n}}>n\}\right)\leq\sum_{C\in \mathcal{C}^{\delta}_{k_{n}}}\nu(C\cap\{\tau_{\epsilon_{n}}>n\})=O\left(\frac{1}{n^{\frac{1-\alpha(d+\delta)}{2}}}\right).
\end{equation*}
Now let us take $n_{p}:=p^{-\frac{4}{1-\alpha d -\alpha\delta}}$. We have:
$\sum_{p\geq1}\nu(K_{N}^{\delta}\cap\{\tau_{\epsilon_{n_{p}}}>n_{p}\})$ is finite, revealing that, using Borel Cantelli lemma, almost surely $x\in K_{N}^{\delta}$,$\tau_{\epsilon_{n_{p}}}(x)\leq n_{p}$, which implies that :
\begin{equation*}
\underset{n\rightarrow +\infty}{\limsup}\frac{\log\sqrt{\tau_{\epsilon_{n_{p}}}}}{-\log\epsilon_{n_{p}}}\leq\frac{1}{\alpha}.
\end{equation*}
This gives the estimate $\limsup$ since $(\epsilon_{n_{p}})_{p}$ decreases to 0 and since $\underset{p\rightarrow +\infty}{\lim}\frac{\epsilon_{n_{p}}}{\epsilon_{n_{p+1}}}=1$.
\end{itemize}
\end{proof}
\subsection{Fluctuations of the rescaled return time. } Throughout this subsection, we adapt the general strategy of \cite{pzs,pzs2}.
Recall that $C_{k}(x)=\{y\in \Sigma : d(x,y)< e^{-k}\}.$ Let $R_{k}(y):=\min\{n\geq1 : \theta^{n}(y)\in C_{k}(y)\}$ denote the first return time of a point $y$ into its $k$-cylinder $C_{k}(y)$, or equivalently the first repetition time of the first $k$ symbols of $y$. We recall that $C_{k}(x)=\{y\in \Sigma : d(x,y)< e^{-k}\}.$ There have been a lot of studies on the quantity $R_{k}$, among all the results we will use the following.
\begin{proposition}(Hirata \cite{hirata})\label{prop hirata}
For $\nu$-almost every point $x\in \Sigma$, the return time into the cylinders $C_{k}(x)$ are asymptotically exponentially distributed in the sense that
\begin{equation*}
\underset{k\rightarrow +\infty}{\lim}\nu_{C_{k}(x)}\left(R_{k}(.)>\frac{t}{\nu(C_{k}(x))}\right)=e^{-t}
\end{equation*}
for a.e. $x$, where the convergence is uniform in $t$.
\end{proposition}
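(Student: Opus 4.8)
This is Hirata's theorem \cite{hirata}, so I only outline the strategy; it rests entirely on the transfer operator formalism recalled above. First I would transfer the problem to the one-sided shift $(\hat\Sigma,\hat\theta,\hat\nu)$ — a standard reduction for Gibbs measures, the two-sided $k$-cylinder being comparable, up to boundary effects, to a one-sided cylinder of comparable length — and attach to a cylinder $A$ the \emph{open} transfer operator $Q_A\colon\mathcal B\to\mathcal B$, $Q_Af:=P(1_{A^{c}}f)$. Iterating the duality identity $\int Pf\cdot g\,d\hat\nu=\int f\cdot(g\circ\hat\theta)\,d\hat\nu$ one obtains, for every $n\geq1$,
\begin{equation*}
\hat\nu\big(A\cap\{R_A>n\}\big)=\hat\nu\big(Q_A^{\,n}(P1_A)\big),
\end{equation*}
so that $\hat\nu_A(R_A>n)=\hat\nu\big(Q_A^{\,n}(P1_A)\big)/\hat\nu(A)$.

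Next I would describe the spectrum of $Q_A$ when $\hat\nu(A)$ is small. Since $P$ is quasicompact and primitive with simple dominant eigenvalue $1$ and eigenfunction $\mathds 1$ (the content of Proposition~\ref{p2} at $u=0$), and $Q_A\to P$ as $\hat\nu(A)\to0$ in the weak sense of the Keller--Liverani perturbation theorem, $Q_A$ has a simple dominant eigenvalue $\lambda_A\in(0,1)$, the rest of its spectrum lying in a disc of radius $\rho_0<1$ independent of $A$; this yields $Q_A^{\,n}=\lambda_A^{\,n}\Pi_A+N_A^{\,n}$ with $\|N_A^{\,n}\|_{\mathcal B}\leq C\rho_0^{\,n}$, hence
\begin{equation*}
\hat\nu_A(R_A>n)=\lambda_A^{\,n}\,\frac{\hat\nu(\Pi_A(P1_A))}{\hat\nu(A)}+O\!\left(\frac{\rho_0^{\,n}}{\hat\nu(A)}\right).
\end{equation*}

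The arithmetic core is then the pair of estimates $1-\lambda_A=\hat\nu(A)(1+o(1))$ and $\hat\nu(\Pi_A(P1_A))=\hat\nu(A)(1+o(1))$ as $A$ shrinks: their corrections are controlled by the short-return overlaps $\sum_{j=1}^{g(k)}\hat\nu(A\cap\hat\theta^{-j}A)/\hat\nu(A)$, where $g(k)$ is a fixed multiple of $k$ determined by the spectral gap and the Hölder exponent, together with terms exponentially small in $k$ (in the spirit of Lemma~\ref{lemma LLT}). For $A=C_k(x)$ these overlaps vanish once $g(k)<R_k(x)$, and by the Ornstein--Weiss repetition-time theorem $R_k(x)$ grows exponentially in $k$ for $\hat\nu$-a.e.\ $x$, so the corrections tend to $0$ along almost every $x$.

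Finally, putting $n=\lfloor t/\nu(C_k(x))\rfloor$ gives $\lambda_{C_k(x)}^{\,n}=\big(1-\nu(C_k(x))(1+o(1))\big)^{t/\nu(C_k(x))}\to e^{-t}$, while the error $\rho_0^{\,n}/\nu(C_k(x))\to0$ because $\rho_0<1$ is fixed; hence $\nu_{C_k(x)}(R_k>t/\nu(C_k(x)))\to e^{-t}$ for a.e.\ $x$. Uniformity in $t$ over compact subsets of $(0,\infty)$ follows from Dini's theorem, since $t\mapsto\nu_{C_k(x)}(R_k>t/\nu(C_k(x)))$ is non-increasing while the limit $t\mapsto e^{-t}$ is continuous, and it then extends to all $t\geq0$. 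I expect the delicate point to be the expansion $1-\lambda_A\sim\hat\nu(A)$: the perturbation $f\mapsto1_Af$ is not small in $\mathcal B$-operator norm, so analytic perturbation theory is unavailable, and one must instead exploit the combinatorial fact that a typical cylinder has no short self-returns in order to control the corrections.
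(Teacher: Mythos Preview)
The paper does not give its own proof of this proposition: it is simply quoted as a known result of Hirata \cite{hirata} and used as a black box (only the consequence $\nu(\tau_{e^{-k}}\leq m_k\mid C_k(x))\to 0$ for $m_k=o(1/\nu(C_k(x)))$ is invoked, in the second half of Lemma~\ref{lemma3.4.2}). So there is nothing in the paper to compare your argument against.

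That said, your outline is a reasonable modern sketch of how such an exponential law is obtained. Two remarks. First, the Keller--Liverani perturbation theorem postdates Hirata's 1993 paper; Hirata's original argument relies on a direct analysis of the Ruelle operator restricted to the complement of the cylinder together with the cluster structure of visits, rather than on an abstract weak-perturbation statement. Your route via $Q_A=P(1_{A^c}\cdot)$ and Keller--Liverani is closer to later treatments (Abadi--Galves, Haydn--Vaienti, etc.) and is arguably cleaner. Second, your appeal to Dini's theorem for uniformity in $t$ is slightly delicate: Dini gives uniform convergence on compact subsets of $(0,\infty)$ for monotone sequences, but here the family is indexed by $k$, not by $t$, and one must also handle $t\downarrow 0$ and $t\to\infty$ separately (both are easy: the functions are uniformly bounded by $1$ and the tail is handled by the crude bound of Lemma~\ref{lemma 3.4} or an analogous one). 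The genuinely delicate point you flag --- that $1-\lambda_A\sim\hat\nu(A)$ requires controlling short self-returns rather than operator-norm smallness --- is indeed the heart of the matter, and your use of Ornstein--Weiss to dispose of it for typical $x$ is the standard device.
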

\begin{lemma}\label{lemma 3.4}
\begin{equation*}
\forall t>0, \quad 
\underset{k\rightarrow +\infty}{\limsup}\ \nu\left(\tau_{e^{-k}}>\left(\frac{t}{\nu(C_{k}(x))}\right)^{2}\bigg\vert C_{k}(x)\right)\leq\frac{1}{1+\beta t},
\end{equation*}
with $\beta:=\frac{1}{\sigma}.$
\end{lemma}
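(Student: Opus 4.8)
Fix $\epsilon=e^{-k}$, write $C:=C_k(x)$, $s:=t/\nu(C_k(x))$ and $\nu_C:=\nu(\cdot\mid C)$, and for $y\in\Sigma$, $m\ge 1$ put
\begin{equation*}
N_m(y):=\#\{1\le j\le m:\ S_j\varphi(y)=0\ \text{and}\ d(\theta^jy,y)<\epsilon\}=\#\{1\le j\le m:\ y\in G_j(\epsilon)\},
\end{equation*}
so that $\{\tau_\epsilon>m\}=\{N_m=0\}$. The plan is to prove a renewal-type bound of the form $\nu_C(\tau_\epsilon>s^2)\le\bigl(1+\bigl(1+o(1)\bigr)\mathbb{E}_{\nu_C}[N_{\lfloor s^2\rfloor}]\bigr)^{-1}+o(1)$ as $k\to\infty$, and then to show, via Proposition \ref{p3}, that $\mathbb{E}_{\nu_C}[N_{\lfloor s^2\rfloor}]\to\beta t$; together these give $\limsup_k\nu_C(\tau_{e^{-k}}>s^2)\le 1/(1+\beta t)$.

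For the renewal bound I would decompose at the first $\epsilon$-return. If $\tau_\epsilon(x)=l$ then $\theta^lx\in C$ and $S_l\varphi(x)=0$; since $C$ is a $k$-cylinder this forces $C_k(\theta^lx)=C$ and $S_m\varphi(x)=S_{m-l}\varphi(\theta^lx)$ for all $m\ge l$, so on $\{\tau_\epsilon=l\}$ one has $N_{\lfloor s^2\rfloor}=1+N_{\lfloor s^2\rfloor-l}\circ\theta^l$, whence
\begin{equation*}
\mathbb{E}_{\nu_C}[N_{\lfloor s^2\rfloor}]=\nu_C(\tau_\epsilon\le s^2)+\sum_{l\ge1}\nu_C(\tau_\epsilon=l)\,\mathbb{E}_{\nu_C}\!\bigl[N_{\lfloor s^2\rfloor-l}\circ\theta^l\ \big|\ \tau_\epsilon=l\bigr].
\end{equation*}
The crucial step is the decoupling estimate
\begin{equation*}
\mathbb{E}_{\nu_C}\!\bigl[N_{m'}\circ\theta^l\ \big|\ \tau_\epsilon=l\bigr]\le(1+o(1))\,\mathbb{E}_{\nu_C}[N_{m'}]\le(1+o(1))\,\mathbb{E}_{\nu_C}[N_{\lfloor s^2\rfloor}],
\end{equation*}
uniformly in $l$ and $m'\le s^2$, with $o(1)\to0$ as $k\to\infty$: the event $\{\tau_\epsilon=l\}\cap C$ depends only on coordinates $0,\dots,l+k-1$ of $x$ while $N_{m'}\circ\theta^l$ depends on coordinates $\ge l$, the overlap being the $k$ coordinates that pin $\theta^lx$ in $C$; for $\nu$-a.e.\ $x$ the word of $C_k(x)$ has minimal period $>k$ for $k$ large (equivalently $R_k(x)\to\infty$, cf.\ Proposition \ref{prop hirata}), so the contributions of times $j\le2k$ to both sides vanish for $k$ large, while for $j\gg k$ the Gibbs/exponential-mixing property together with the local limit theorem (Proposition \ref{p3}) makes the $j$-th terms on the two sides agree up to a factor $1+o_k(1)$. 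Granting this, the displayed identity gives $\mathbb{E}_{\nu_C}[N_{\lfloor s^2\rfloor}]\le\nu_C(\tau_\epsilon\le s^2)\bigl(1+(1+o(1))\mathbb{E}_{\nu_C}[N_{\lfloor s^2\rfloor}]\bigr)$, and rearranging yields the claimed bound.

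For the first moment, since $\varphi$ is constant on $m_0$-cylinders and $k>m_0$,
\begin{equation*}
\mathbb{E}_{\nu_C}[N_{\lfloor s^2\rfloor}]=\frac{1}{\nu(C)}\sum_{m=1}^{\lfloor s^2\rfloor}\nu\bigl(C\cap\{S_m\varphi=0\}\cap\theta^{-m}C\bigr).
\end{equation*}
I would split at $m\asymp k$. For $\nu$-a.e.\ $x$ the terms with $1\le m\le2k$ contribute $o(1)$: those with $m\le k$ vanish because the word of $C$ has period $>k$, and those with $k<m\le2k$ are $O(k\,\nu(C))=o(1)$. For $2k<m\le s^2$, Proposition \ref{p3} (with $A=C$, $q=k$, and $B$ the one-sided cylinder with $\hat\nu(B)=\nu(C)$) gives $\nu(C\cap\{S_m\varphi=0\}\cap\theta^{-m}C)=\nu(C)^2/(\sigma_\varphi\sqrt{m-k})+O(\nu(C)k^2e^{-\gamma k}/m)$; the error term sums to $O(k^2e^{-\gamma k}\log s)=o(1)$, and the main term sums to $\frac{\nu(C)}{\sigma_\varphi}\sum_{2k<m\le s^2}(m-k)^{-1/2}=\frac{2\nu(C)s}{\sigma_\varphi}(1+o(1))=\frac{2t}{\sigma_\varphi}(1+o(1))$. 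Hence $\mathbb{E}_{\nu_C}[N_{\lfloor s^2\rfloor}]\to\beta t$ with $\beta=1/\sigma$, and combining with the previous paragraph gives $\limsup_{k\to\infty}\nu_C(\tau_{e^{-k}}>s^2)\le1/(1+\beta t)$.

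\textbf{Main obstacle.} The delicate point is the decoupling estimate: after its first $\epsilon$-return the orbit must be ``renewed'' inside its (very long) $k$-cylinder well enough that the expected number of \emph{subsequent} $\epsilon$-returns is \emph{asymptotically bounded by}, not merely comparable to, the unconditioned expectation, and this uniformly in the return time $l$. This forces one to combine the bounded-distortion/$\psi$-mixing properties of the Gibbs measure with the local-limit error term of Proposition \ref{p3}, exploiting that $k\to\infty$ so that the distortion incurred tends to $1$ rather than to a fixed constant $>1$ (a fixed constant would give a limit bounded away from $0$ as $t\to\infty$, not $1/(1+\beta t)$). A secondary technicality is the $\nu$-a.e.\ control of the short-time ($m\le2k$) contributions to the first moment, handled through Proposition \ref{prop hirata}.
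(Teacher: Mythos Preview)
Your approach via the first-return decomposition and the first moment of $N_m$ is genuinely different from the paper's, and it shifts all the difficulty to exactly the place you flag as the main obstacle. The paper instead uses a \emph{last-visit} decomposition: for any $n$,
\[
\nu(C_k(x))=\sum_{l=0}^{n}\nu\bigl(C_k(x)\cap\{S_l\varphi=0\}\cap\theta^{-l}(C_k(x)\cap\{\tau_{e^{-k}}>n-l\})\bigr),
\]
an exact partition identity (classify points of $C_k(x)$ by the last time $l\le n$ the $F$-orbit visits $C_k(x)\times\{0\}$). The crucial gain is that the complicated set $C_k(x)\cap\{\tau_{e^{-k}}>n-l\}$ now sits in the \emph{future} slot $B$ of Proposition~\ref{p3}, where $B$ may be an arbitrary measurable set. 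Combined with the trivial monotonicity $\{\tau>n-l\}\supset\{\tau>n\}$, one reads off directly
\[
1\ \ge\ \nu_C(\tau_{e^{-k}}>n_k)\Bigl(1+\beta\,\nu(C_k(x))\sum_{l}l^{-1/2}\Bigr)-o(1),
\]
and the lemma follows. No decoupling step is needed at all.

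By contrast, your first-return identity places the complicated event $\{\tau_\epsilon=l\}$ in the \emph{past}. Proposition~\ref{p3} requires the past set $A$ to be a $q$-cylinder, which $C\cap\{\tau_\epsilon=l\}$ is not (it is only a union of $(l{+}k)$-cylinders, and $l$ is typically of order $s^2\gg k$, so the proposition gives nothing for $j\le l+2k$). A bare $\psi$-mixing argument also fails to yield the $(1+o(1))$ you need: the future count $N_{m'}\circ\theta^l$ depends on $S_j\varphi\circ\theta^l$, which involves coordinates down to $l-m_0$ and therefore overlaps with the conditioning by a block of length $\sim k$, so bounded distortion a priori gives only a fixed constant. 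Your intuition that conditioning on the long $k$-cylinder $C$ should make past and future nearly independent (hence distortion $1+O(\rho^k)$) is the right one for Gibbs measures, but turning it into a uniform inequality for the conditional expectation of $N_{m'}\circ\theta^l$, valid for all $l$ and all time-scales $j$, is substantially more work than anything in the paper's toolkit. In short: your strategy is plausible, and your first-moment computation via Proposition~\ref{p3} is essentially the same as the paper's core estimate, but the hard step you single out is genuinely hard, whereas the paper's last-visit decomposition makes it disappear entirely.
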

\begin{proof}
Let $k\geq m_{0}$ and $n$ be some integers. We make a partition of a cylinder $C_{k}(x)$ according to the value $l\leq n$ of the last passage in the time interval $0,...,n$ of the orbit of $(x,0)$ by the map $F$ into $C_{k}(x)\times \{0\}$. This gives the following equality :
\begin{equation}\label{5}
\nu(C_{k}(x))=\sum_{l=0}^{n}\nu\left(C_{k}(x)\cap \{S_{l}=0\}\cap \theta^{-l}(C_{k}(x)\cap \{\tau_{e^{-k}}>n-l\})\right).
\end{equation}
Let $n_{k}:=\left(\frac{t}{\nu(C_{k}(x))}\right)^{2}$. We claim that :
\begin{equation*}
\underset{k \rightarrow \infty}{\limsup}\nu(\{\tau_{e^{-k}}>n_{k}\}\mid C_{k}(x))\leq \frac{1}{1+\beta t}
\end{equation*}
According to the decomposition \eqref{5} and to Proposition \ref{p3}, there exists $c_{1}>0$ such that we have :
\begin{equation*}
\nu(C_{k}(x))\geq \nu(C_{k}(x)\cap \{\tau_{e^{-k}}> n_{k}\})\left(1+ \beta \nu(C_{k}(x))\sum_{l=2^{k+1}}^{n_{k}}\frac{1}{\sqrt{l-k}}\right)-c_{1}\nu(C_{k}(x))k^{2}e^{-\gamma k}\sum_{l=2^{k+1}}^{n_{k}}\frac{1}{l-2^{k}}
\end{equation*}
Our claim follows from the fact that
$\beta\nu(C_{k}(x))\sum_{l=2^{k+1}}^{l=n_{k}}\frac{1}{\sqrt{l-k}}\simeq \beta t$ and the term
$k^{2}e^{-\gamma k}\sum_{l=2^{k+1}}^{l=n_{k}}\frac{1}{l-2^{k}}\ll 1$.
\end{proof}
\begin{corollary}
The family of conditional distributions of the random variables $\left(\nu(C_{k}(x))\sqrt{\tau_{e^{-k}}}\mid C_{k}(x)\right)_{k\geq0}$ is tight.
\end{corollary}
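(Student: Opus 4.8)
The plan is to read off tightness directly from Lemma~\ref{lemma 3.4}, of which this corollary is essentially a repackaging. The starting observation is the elementary identity of events
\begin{equation*}
\left\{\nu(C_{k}(x))\sqrt{\tau_{e^{-k}}}> t\right\}=\left\{\tau_{e^{-k}}>\left(\frac{t}{\nu(C_{k}(x))}\right)^{2}\right\},
\end{equation*}
valid for every $t>0$ since both sides of the inner inequalities are nonnegative. Substituting this into Lemma~\ref{lemma 3.4} yields, for every $t>0$,
\begin{equation*}
\underset{k\rightarrow+\infty}{\limsup}\ \nu\left(\nu(C_{k}(x))\sqrt{\tau_{e^{-k}}}> t\ \big\vert\ C_{k}(x)\right)\leq\frac{1}{1+\beta t}.
\end{equation*}

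From here the argument is short. Fix $\eta>0$ and choose $t=t(\eta)$ so large that $\frac{1}{1+\beta t}<\eta$. By the displayed bound there is $k_{0}=k_{0}(\eta)$ such that for every $k\geq k_{0}$ the conditional probability that $\nu(C_{k}(x))\sqrt{\tau_{e^{-k}}}$ exceeds $t$ is strictly less than $\eta$. For the finitely many remaining indices $k<k_{0}$, conservativity of $F$ (the Poincar\'e recurrence property recalled in the introduction, which gives $\tau_{e^{-k}}<\infty$ $\nu$-almost everywhere) ensures that each conditional law of $\nu(C_{k}(x))\sqrt{\tau_{e^{-k}}}$ given $C_{k}(x)$ is a genuine probability measure on $[0,\infty)$, hence has a finite quantile; enlarging $t$ if necessary to dominate these finitely many quantiles produces a single $t$ that works for all $k\geq0$. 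Since $\eta>0$ was arbitrary, the family is tight.

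The only point that needs a word of care is that Lemma~\ref{lemma 3.4} is stated along a nested sequence of cylinders $C_{k}(x)$ shrinking to a point $x$, whereas tightness asks for a bound uniform over all cylinders $C\in\mathcal{C}_{k}$ for each fixed $k$. This is not a genuine obstacle: the conditional probability $\nu(\tau_{e^{-k}}>n_{k}\mid C_{k}(x))$ and the threshold $n_{k}=(t/\nu(C_{k}(x)))^{2}$ both depend on $x$ only through the cylinder that contains it, and the estimate in the proof of Lemma~\ref{lemma 3.4} rests on Proposition~\ref{p3}, whose error term is uniform over $q$-cylinders, together with the fact that $\sup_{C\in\mathcal{C}_{k}}\nu(C)\to0$ as $k\to\infty$; so the bound in fact holds uniformly over $C\in\mathcal{C}_{k}$, which is exactly what the tightness argument consumes. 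I therefore expect no real difficulty: the proof is a direct corollary, and the main step is simply the event identity above together with the decay $\frac{1}{1+\beta t}\to0$.
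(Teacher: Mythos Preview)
Your proposal is correct and matches the paper's intent: the corollary is stated there without proof, as an immediate consequence of Lemma~\ref{lemma 3.4}, and your argument via the event identity and the decay $\frac{1}{1+\beta t}\to 0$ is exactly the intended one. Your final paragraph on uniformity over cylinders is a harmless addition but not needed here, since the corollary as stated fixes $x$ and asks only for tightness in $k$.
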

Hence it will be enough to prove that the advertised limit law is the only possible accumulation point of our destination. We hence abbreviate
\begin{equation*}
X_{k}:=\nu(C_{k}(x))\sqrt{\tau_{e^{-k}}}
\end{equation*}
\begin{lemma}\label{lemma3.4.2}
Suppose that the sequence of conditional distributions of $(X_{k_{p}}\mid C_{k_{p}}(x))_{p}$ converges to the law of some random variable $X$. Then the limit satisfies the integral equation:
\begin{equation*}
1=\mathbb{P}(X>t)+\beta t\int_{0}^{1}\frac{\mathbb{P}(X>t\sqrt{1-u})}{\sqrt{u}}du \quad \forall t>0.
\end{equation*}
\end{lemma}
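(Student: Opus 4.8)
The strategy is to refine the one-sided bound of Lemma \ref{lemma 3.4} into an exact asymptotic identity by carrying the \emph{full} right-hand side of the last-visit decomposition \eqref{5} through Proposition \ref{p3}. Fix $t>0$ and, for $k\ge m_{0}$, set $n_{k}:=\lfloor (t/\nu(C_{k}(x)))^{2}\rfloor$, so that $t_{k}:=\nu(C_{k}(x))\sqrt{n_{k}}\to t$ and, since $\tau_{e^{-k}}$ is integer-valued, $\{\tau_{e^{-k}}>n_{k}\}=\{X_{k}>t_{k}\}$. Plugging $n=n_{k}$ into \eqref{5} and dividing by $\nu(C_{k}(x))$ gives
\[
1=\nu_{C_{k}(x)}(\tau_{e^{-k}}>n_{k})+\sum_{l=1}^{n_{k}}\frac{\nu(C_{k}(x)\cap\{S_{l}=0\}\cap\theta^{-l}(C_{k}(x)\cap\{\tau_{e^{-k}}>n_{k}-l\}))}{\nu(C_{k}(x))},
\]
whose first term equals $\mathbb{P}(X_{k}>t_{k}\mid C_{k}(x))$ and, along the given subsequence $(k_{p})$, converges to $\mathbb{P}(X>t)$ --- first at continuity points of the law of $X$, then at every $t$ by right-continuity.

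Next I would split the remaining sum at a cut-off $L_{k}$ chosen with $k\ll L_{k}\ll n_{k}$. For $L_{k}\le l\le n_{k}$, Proposition \ref{p3}, applied with $A=C=C_{k}(x)$ (a $q$-cylinder with $q$ of the same order as $k$) and $B$ the one-sided projection of $C_{k}(x)\cap\{\tau_{e^{-k}}>n_{k}-l\}$, rewrites the $l$-th summand as
\[
\frac{\nu(C_{k}(x))\,\nu(C_{k}(x)\cap\{\tau_{e^{-k}}>n_{k}-l\})}{\sigma_{\varphi}\sqrt{l-k}}+O\!\left(\frac{\nu(C_{k}(x))\,k^{2}e^{-\gamma k}}{l-2k}\right);
\]
summing the error over $l$ and dividing by $\nu(C_{k}(x))$ produces $O(k^{2}e^{-\gamma k}\log n_{k})=o(1)$, exactly as in the proof of Lemma \ref{lemma 3.4}. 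For the low indices $1\le l\le L_{k}$ I would bound the $l$-th summand by $\nu(C_{k}(x)\cap\theta^{-l}C_{k}(x))$ and invoke the Gibbs property: for $\nu$-a.e.\ $x$ and all large $k$ the word $x_{0}\cdots x_{k-1}$ has no period $\le k/2$, so $C_{k}(x)\cap\theta^{-l}C_{k}(x)=\emptyset$ for $1\le l<k/2$, while $\nu(C_{k}(x)\cap\theta^{-l}C_{k}(x))\le C\nu(C_{k}(x))^{2}$ for $l\ge k$; this yields a total contribution $O(L_{k}\nu(C_{k}(x)))+o(1)=O(t^{2}L_{k}/n_{k})+o(1)\to0$ after normalisation. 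The condition $L_{k}\ll n_{k}$ is exactly what makes this acceptable, since the weight $u^{-1/2}$ in the target integral puts only mass $O(\sqrt{L_{k}/n_{k}})$ on $(0,L_{k}/n_{k})$.

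It remains to recognise the surviving sum. Writing $\nu(C_{k}(x)\cap\{\tau_{e^{-k}}>n_{k}-l\})=\nu(C_{k}(x))\,\mathbb{P}(X_{k}>t_{k}\sqrt{1-l/n_{k}}\mid C_{k}(x))$ and $\beta=1/\sigma_{\varphi}$, this part equals
\[
\beta t_{k}\sum_{l=L_{k}}^{n_{k}}\frac{1}{n_{k}}\,\frac{\mathbb{P}(X_{k}>t_{k}\sqrt{1-l/n_{k}}\mid C_{k}(x))}{\sqrt{l/n_{k}-k/n_{k}}},
\]
a Riemann sum, of step $1/n_{k}$ and monotone integrand, for $\beta t_{k}\int_{L_{k}/n_{k}}^{1}(u-k/n_{k})^{-1/2}\,\mathbb{P}(X_{k}>t_{k}\sqrt{1-u}\mid C_{k}(x))\,du$, with discretisation error $O((L_{k}n_{k})^{-1/2})=o(1)$. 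Along $(k_{p})$ the hypothesis gives convergence of the integrand for a.e.\ $u\in(0,1)$, and for $p$ large $(u-k_{p}/n_{k_{p}})^{-1/2}\le\sqrt{2}\,u^{-1/2}$ on $(L_{k_{p}}/n_{k_{p}},1)$ (since $L_{k_{p}}\gg k_{p}$), so dominated convergence gives the limit $\beta t\int_{0}^{1}u^{-1/2}\,\mathbb{P}(X>t\sqrt{1-u})\,du$. Combining the three contributions yields the asserted identity at every continuity point of $F_{X}$, hence for all $t>0$ by right-continuity of both sides.

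The delicate point is this passage to the limit in the bulk sum: the cut-off $L_{k}$ must be placed so that, simultaneously, Proposition \ref{p3} applies (which needs $l\gg k$), the factor $\sqrt{l-k}$ is comparable to $\sqrt{l}$, and the $u^{-1/2}$ singularity of the target integral survives (which needs $L_{k}/n_{k}\to0$); and, more technically, since weak convergence only controls the conditional survival functions at continuity points of $F_{X}$, the identity is first obtained on a co-countable set of $t$ and only then extended to all $t$ by monotonicity and right-continuity.
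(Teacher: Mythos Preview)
Your overall strategy coincides with the paper's: both start from the last-visit decomposition \eqref{5}, apply Proposition \ref{p3} termwise to isolate the main contribution $\beta\,\nu(C_{k}(x))\sum_{l}(l-k)^{-1/2}\,\nu(\tau_{e^{-k}}>n_{k}-l\mid C_{k}(x))$, and then pass to the limit along $(k_{p})$ to obtain the integral equation. The differences are in two technical steps, and yours has small but genuine gaps in each.

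\textbf{Low-$l$ terms.} The paper does \emph{not} use the no-short-period/Gibbs argument you propose; it invokes Hirata's exponential law (Proposition \ref{prop hirata}) to see that, for any $m_{k}=o(1/\nu(C_{k}(x)))$, the union of the first $m_{k}$ summands has measure $\le\nu(C_{k}(x)\cap\{\tau_{e^{-k}}\le m_{k}\})=o(\nu(C_{k}(x)))$. Your Gibbs bound is workable, but your arithmetic is off: after normalising by $\nu(C_{k}(x))$ you get $O(L_{k}\,\nu(C_{k}(x)))=O(tL_{k}/\sqrt{n_{k}})$, not $O(t^{2}L_{k}/n_{k})$. Hence the correct constraint is $L_{k}=o(\sqrt{n_{k}})$, not merely $L_{k}\ll n_{k}$; this still leaves ample room (e.g.\ $L_{k}=k^{2}$), but your stated condition is too weak. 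You also leave the window $k/2\le l<k$ unaccounted for; either extend the non-periodicity argument or simply use the paper's Hirata-based bound, which handles all $l\le m_{k}$ at once.

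\textbf{Passage to the limit in the bulk.} The paper does not attempt a direct Riemann-sum/dominated-convergence argument. Instead it proves the two inequalities separately by a block decomposition: partition $\{1,\dots,n_{k}\}$ into $N$ blocks of length $\lfloor n_{k}/N\rfloor$, freeze the monotone factor $\nu(\tau_{e^{-k}}>n_{k}-l\mid C_{k}(x))$ at the appropriate endpoint of each block, sum $l^{-1/2}$ explicitly within blocks, take $p\to\infty$, and only then let $N\to\infty$. Your claim of a ``monotone integrand'' is false --- the integrand is a product of an increasing factor $\mathbb{P}(X_{k}>t_{k}\sqrt{1-u})$ and a decreasing factor $(u-k/n_{k})^{-1/2}$ --- so the discretisation error $O((L_{k}n_{k})^{-1/2})$ is not justified as written. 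One can repair this (bound the probability by $1$ to control the Riemann error of $u^{-1/2}$, then handle the remainder separately), but the paper's $N$-block argument is the cleaner route and is exactly designed to avoid this issue.
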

\begin{proof}
To begin with, let us set $f(t):=\mathbb{P}(X>t)$
\begin{itemize}
\item First, we will prove that
\begin{equation*}
\forall t>0 \quad 1\geq f(t)+\beta t\int_{0}^{1}u^{-1/2}f(t(1-u)^{1/2})du.
\end{equation*}
The decomposition in \eqref{5} and Proposition \ref{p3}, implies that there exists $c>0$ such that we have:
\begin{equation*}
1\geq\nu(\tau_{e^{-k}}>n_{k}\mid C_{k}(x))+\beta\nu(C_{k}(x))\sum_{l=1}^{n_{k}}\frac{\nu(\tau_{e^{-k}}>n_{k}-l\mid C_{k}(x))}{\sqrt{l-k}}-c\sum_{l=1}^{n_{k}}\frac{k^{2} e^{-\gamma k}\nu(C_{k}(x))}{l-2k}
\end{equation*}
We want to estimate the formula of this inequality when $k\rightarrow\infty$. We note that through the proof of Lemma \ref{lemma 3.4}, it has been proved that $\underset{k_{p}\rightarrow\infty}{\lim}\sum_{l=1}^{n_{k_{p}}}\frac{k_{p}e^{-\gamma k_{p}}}{l-k_{p}}=0$. Thus, if we set $B_{n_{k}}:=\sum_{l=1}^{n_{k}}\frac{\nu(\tau_{e^{-k}}>n_{k}-l\mid C_{k}(x))}{\sqrt{l-k}}$, we are left to estimate mainly the lower bound on the lim inf of  $\nu(C_{k_{p}}(x))B_{n_{k_{p}}}$ as $p\rightarrow\infty$.
Now, by monotonicity, we have
\begin{eqnarray*}
B_{n_{k}}&\geq&\sum_{l=\lfloor\frac{n_{k}}{N}\rfloor}^{N\lfloor\frac{n_{k}}{N}\rfloor}\frac{\nu(\tau_{e^{-k}}>n_{k}-l\mid C_{k}(x))}{\sqrt{l-k}}\\
&=&\sum_{r=1}^{N-1}\sum_{l=0}^{\lfloor\frac{n_{k}}{N}\rfloor}\frac{\nu(\tau_{e^{-k}}>n_{k}-l-(r\lfloor n_{k/N}\rfloor)\mid C_{k}(x))}{\sqrt{l+r\lfloor n_{k}/N}\rfloor}.
\end{eqnarray*}
Observe that the term:
\begin{eqnarray*}
\nu(\tau_{e^{-k}}>n_{k}-l-(r\lfloor n_{k/N}\rfloor)\mid C_{k}(x))
&\geq&\nu(\tau_{e^{-k}}>(1-r/N)n_{k}\mid C_{k}(x))\\
&=&\mathbb{P}\left(X_{k}>(\sqrt{1-r/N}t)\mid C_{k}(x)\right).
\end{eqnarray*}
Thus, now by evaluating the following sum:
\begin{equation*}
\sum_{l=0}^{\lfloor\frac{n_{k}}{N}\rfloor}\frac{1}{\sqrt{l+r\lfloor n_{k}/N}\rfloor}\geq\sqrt{\left\lfloor\frac{n_{k}}{ N}\right\rfloor}\frac{1}{\sqrt{r+1}},
\end{equation*}
We obtain
\begin{equation*}
B_{n_{k}}=\sum_{r=1}^{N-1}\sqrt{\left\lfloor\frac{n_{k}}{ N}\right\rfloor}\frac{1}{\sqrt{r+1}}\mathbb{P}\left(X_{k}>t\sqrt{(1-r/N)}\mid C_{k}(x)\right)
\end{equation*}
But, from the hypothesis that $\mathbb{P}\left(X_{k_{p}}>t\sqrt{1-r/N }\right)\overset{k_{p}\rightarrow \infty}{\longrightarrow}f\left(t\sqrt{1-r/N}\right)$, we get:
\begin{eqnarray*}
\underset{p\rightarrow\infty}{\liminf}\nu(C_{k_{p}}(x))B_{n_{k_{p}}}
&\geq&\frac{t}{N}\sum_{r=1}^{N-1}\frac{\mathbb{P}\left(X>t\sqrt{1-r/N}\right)}{\sqrt{(r+1)/N}}\\
&\geq&t\int_{0}^{1}\frac{f\left(t\sqrt{1-u}\right)}{\sqrt{u}}du.
\end{eqnarray*}
Combining these estimates and taking the limit when $k_{p}\rightarrow \infty$, we establish the desired inequality:
\begin{equation*}
1\geq f(t)+\beta t\int_{0}^{1}\frac{f\left(t\sqrt{1-u}\right)}{\sqrt{u}}du.
\end{equation*}
\item In the same way we treat the converse inequality, using the other half of Proposition \ref{p3}, then there exists $c^{'}>0$, such that:
\begin{equation*}
1\leq \nu(\tau_{e^{-k}}>n_{k}\mid C_{k}(x))+\beta\nu(C_{k}(x))\sum_{l=1}^{n_{k}}\frac{\nu(\tau_{e^{-k}}>n_{k}-l\mid C_{k}(x))}{\sqrt{l-k}}+c^{'}\sum_{l=1}^{n_{k}}\frac{k^{2} e^{-\gamma k}}{l-2k}
\end{equation*}
Let $m_{k}=o\left(\frac{1}{\nu(C_{k}(x))}\right)$, then using Proposition \ref{prop hirata}, we have
\begin{eqnarray*}
\nu(\tau_{e^{-k}}&\leq&1-\nu\left(\tau_{e^{-k}}>\frac{m_{k}\nu(C_{k})(x)}{\nu(C_{k}(x))}\right)\overset{k\rightarrow\infty}{\longrightarrow}1-e^{0}=0.
\end{eqnarray*}
from which we observe that we can forget the first $m_{k}$ term of the following sum, because
\begin{eqnarray*}
\sum_{l=1}^{m_{k}}&&\nu(C_{k}(x)\cap\{S_{l}=0\}\cap\theta^{-l}(C_{k}(x)\cap\{\tau_{e^{-k}}>m_{k}-l\}))\\
&=&\nu\left(\bigcup_{l=1}^{m_{k}}C_{k}(x)\cap\{S_{l}=0\}\cap\theta^{-l}(C_{k}(x)\cap\{\tau_{e^{-k}}>m_{k}-l\})\right)\\
&\leq&\nu(\{\tau_{e^{-k}}\leq m_{k}\}\cap C_{k}(x))\\
&=&o(\nu(C_{k}(x))).
\end{eqnarray*}
Furthermore, one verifies that this sum of terms between $m_{k}$ and $\lfloor n_{k}/N\rfloor$ is bounded above by $\frac{2t}{\nu(C_{k}(x))\sqrt{N}}$. Hence, we get:
\begin{eqnarray*}
1&\leq& \nu(\tau_{e^{-k}}>n_{k}\mid C_{k}(x))+\beta\nu(C_{k}(x))\sum_{l=\lfloor \frac{n_{k}}{N}\rfloor}^{n_{k}}\frac{\nu(\tau_{e^{-k}}>n_{k}-l\mid C_{k}(x))}{\sqrt{l-k}}\\
&+& c^{'}\sum_{l=1}^{n_{k}}\frac{k^{2} e^{-\gamma k}}{l-2k} + o(\nu(C_{k}(x)))+\beta\frac{2t}{\sqrt{N}},
\end{eqnarray*} where $N$ is so large that the last three terms goes to 0 as $k\rightarrow0$.
Moreover, if we set\\ $B_{n_{n_{k}}}^{'}:=\sum_{l=\lfloor n_{k}/N \rfloor}^{N\lfloor n_{k}/N\rfloor}\frac{\nu(\tau_{e^{-k}}>n_{k}-l\mid C_{k}(x))}{\sqrt{l}}$, we verify that
\begin{equation*}
\sum_{l=\lfloor \frac{n_{k}}{N}\rfloor}^{n_{k}}\frac{\nu\left(\tau_{e^{-k}}>n_{k}-l\mid C_{k}(x)\right)}{\sqrt{l-k}}\leq (1+\epsilon_{k})\left(B_{n_{k}}^{'}+N\frac{1}{\sqrt{n_{k}-N}}\right).
\end{equation*}
We now proceed to show the bound on the lim sup of $\nu(C_{k}(x))B_{n_{k_{p}}}^{'}$ as $p\rightarrow0$
\begin{eqnarray*}
B^{'}_{n_{k}}&=&\sum_{r=1}^{N-1}\sum_{l=r\lfloor\frac{n_{k}}{N}\rfloor}^{(r+1)\lfloor\frac{ n_{k}}{N}\rfloor-1}\frac{\nu\left(\tau_{e^{-k}}>n_{k}-l\mid C_{k}(x)\right)}{\sqrt{l}}\\
&\leq&\sum_{r=1}^{N-1}\sum_{l=0}^{\lfloor\frac{n_{k}}{N}\rfloor-1}\frac{\nu\left(\tau_{e^{-k}}>n_{k}-l-((r+1)\lfloor n_{k}/N\rfloor)\mid C_{k}(x)\right)}{\sqrt{l+r\lfloor n_{k}/N}\rfloor}.
\end{eqnarray*}
It can be easily seen that
\begin{equation*}
\sum_{l=0}^{\lfloor\frac{n_{k}}{N}\rfloor-1}\frac{1}{\sqrt{l+r\lfloor n_{k}/N}\rfloor}\leq \sqrt{\left\lfloor\frac{n_{k}}{ N}\right\rfloor}\frac{1}{\sqrt{r}},
\end{equation*}
hence, it follows immediately that
\begin{equation*}
B^{'}_{n_{k}}\leq\sum_{r=1}^{N-1}\sqrt{\left\lfloor\frac{n_{k}}{ N}\right\rfloor}\frac{1}{\sqrt{r}}\nu\left(\tau_{e^{-k}}>(1-(r+1)/N)n_{k}\mid C_{k}(x)\right).
\end{equation*}
Applying $\limsup$ when $p\rightarrow\infty$, then
\begin{equation*}
\underset{p\rightarrow\infty}{\limsup}\nu(C_{k_{p}}(x))\sum_{l=\lfloor n_{k_{p}}/N \rfloor}^{N\lfloor n_{k_{p}}/N\rfloor}\frac{\nu\left(\tau_{e^{-k_{p}}}>n_{k_{p}}-l\mid C_{k_{p}}(x)\right)}{\sqrt{l}}\leq t\int_{0}^{1}\frac{f(t\sqrt{1-u})}{\sqrt{u}}du.
\end{equation*}
Taking the limit when $k_{p}\rightarrow\infty$, and combining all these estimates, we get the second inequality:
\begin{equation*}
1\leq f(t)+\beta t\int_{0}^{1}\frac{f(t\sqrt{1-u})}{\sqrt{u}}du.
\end{equation*}
\end{itemize}
\end{proof}
\begin{lemma}\label{lemma3.4.3}
We know that the conditional distributions of the $X_{k_{p}}$ converge to a random variable $X$ iff the conditional distributions of the $X_{k_{p}}^{2}$ converge to $X^{2}$. The later then satisfies
\begin{equation*}
1=\mathbb{P}(X^2>t)+\int_{0}^{t}\frac{\mathbb{P}(X^{2}>t-v)}{\sqrt{v}}dv \qquad\forall t>0.
\end{equation*}
\end{lemma}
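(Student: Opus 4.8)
The plan is to treat the statement in two independent parts: the equivalence of the two convergences, which is a soft consequence of the continuous mapping theorem, and the passage from the integral equation of Lemma~\ref{lemma3.4.2} to the one announced here, which is a change of variable.

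For the equivalence, observe first that each $X_{k_p}=\nu(C_{k_p}(x))\sqrt{\tau_{e^{-k_p}}}$ is nonnegative (indeed $\tau_{e^{-k_p}}\geq1$ and $\nu(C_{k_p}(x))>0$), so any conditional distributional limit $X$ is nonnegative almost surely, and the same holds for $X^2$ and the $X_{k_p}^2$. The maps $s\mapsto s^{2}$ and $s\mapsto\sqrt{s}$ are continuous and mutually inverse on $[0,+\infty)$. Applying the continuous mapping theorem to the conditional laws $\nu_{C_{k_p}(x)}$, the sequence of conditional distributions of $X_{k_p}$ converges to the law of $X$ if and only if the sequence of conditional distributions of $X_{k_p}^{2}=\nu(C_{k_p}(x))^{2}\,\tau_{e^{-k_p}}$ converges to the law of $X^{2}$; the limit in the second statement is $X^2$ precisely because $\sqrt{X^2}=X$ almost surely.

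For the integral equation, I would start from the identity provided by Lemma~\ref{lemma3.4.2},
\begin{equation*}
1=\mathbb{P}(X>t)+\beta t\int_{0}^{1}\frac{\mathbb{P}(X>t\sqrt{1-u})}{\sqrt{u}}\,du,\qquad\forall t>0,
\end{equation*}
and combine it with the elementary identity $\mathbb{P}(X>s)=\mathbb{P}(X^{2}>s^{2})$, valid for every $s\geq0$ since $X\geq0$. Replacing $t$ by $\sqrt{t}$ turns the equation into
\begin{equation*}
1=\mathbb{P}(X^{2}>t)+\beta\sqrt{t}\int_{0}^{1}\frac{\mathbb{P}(X^{2}>t(1-u))}{\sqrt{u}}\,du,
\end{equation*}
and the substitution $v=tu$, under which $du=dv/t$, $\sqrt{u}=\sqrt{v}/\sqrt{t}$ and $t(1-u)=t-v$, rewrites the integral as $t^{-1/2}\int_{0}^{t}\mathbb{P}(X^{2}>t-v)\,v^{-1/2}\,dv$. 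The two factors $\sqrt{t}$ cancel and one is left with
\begin{equation*}
1=\mathbb{P}(X^{2}>t)+\beta\int_{0}^{t}\frac{\mathbb{P}(X^{2}>t-v)}{\sqrt{v}}\,dv,\qquad\forall t>0,
\end{equation*}
which is the asserted equation (the constant $\beta=1/\sigma$ of Lemma~\ref{lemma 3.4} multiplying the integral exactly as before).

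I do not expect a genuine obstacle in this lemma; it is bookkeeping built on Lemma~\ref{lemma3.4.2}. The only points worth a line of justification are that $v\mapsto\mathbb{P}(X^{2}>t-v)/\sqrt{v}$ is integrable on $(0,t)$ — it is dominated there by the integrable function $v^{-1/2}$ — so that the change of variables is legitimate, and that the continuous mapping theorem is indeed applicable to weak convergence of the conditional laws, which it is by the Portmanteau characterization.
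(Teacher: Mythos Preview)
Your argument is correct and is exactly the natural one: the continuous mapping theorem for the equivalence, and a substitution $t\to\sqrt{t}$ followed by $v=tu$ to pass from the integral equation of Lemma~\ref{lemma3.4.2} to the one for $X^{2}$. The paper gives no proof of this lemma at all --- it is simply stated as an immediate consequence of Lemma~\ref{lemma3.4.2} --- so there is no alternative approach to compare against; you have written out what the author left implicit.

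One remark: your computation produces the equation with the factor $\beta$ in front of the integral, while the displayed statement of Lemma~\ref{lemma3.4.3} omits it. This is a typographical slip in the paper, not an error on your part; the constant $\beta$ is indeed present and reappears explicitly in the proof of Lemma~\ref{lemma3.4.4} (where the computation $\mathbb{E}[e^{-sW}]=\beta\int_0^\infty w^{-1/2}e^{-sw}\,dw\cdot(1-\mathbb{E}[e^{-sW}])$ leads to $c_\beta=(\beta\Gamma(1/2))^{-1}$). So your version with $\beta$ is the correct one.
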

\begin{lemma}\label{lemma3.4.4}
 Let $W$ be a random walk variable with values in $[0,\infty[$ satisfying
 \begin{equation*}
 \mathbb{P}(W\leq t)=\int_{0}^{\infty}\frac{\mathbb{P}(W>t-v)}{\sqrt{v}}dv \qquad\forall t>0,
 \end{equation*}
 then
 \begin{equation*}
 \mathbb{E}\left[e^{-sW}\right]=\frac{1}{1+c_{\beta} \sqrt{s}}\qquad\forall s>0.
 \end{equation*}
 with $c_{\beta}:=\left(\beta\Gamma\left(\frac{1}{2}\right)\right)^{-1}$. In particular, the distribution of $W$ coincides with that of $c_{\beta}^{2}\dfrac{\mathcal{E}^{2}}{N^{2}}$, where the independent variables $\mathcal{E}$ and $\mathcal{N}$ are the exponential distribution of mean 1 and the standard Gaussian distribution respectively.
\end{lemma}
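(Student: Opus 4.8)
The plan is to reduce the statement to a one-line computation with Laplace transforms. Write $\varphi(s):=\mathbb{E}[e^{-sW}]$ for $s>0$ and set $G(t):=\mathbb{P}(W>t)$, $F(t):=\mathbb{P}(W\le t)=1-G(t)$. The relation defining $W$ is a renewal-type convolution equation: with $h(v):=v^{-1/2}$ it reads $F=\beta\,(h*G)$ on $(0,\infty)$, the convolution automatically restricting the integration to $v\in(0,t)$ since $W\ge 0$. First I would record the elementary transforms needed for $s>0$: the substitution $v=u/s$ gives $\int_0^\infty e^{-sv}v^{-1/2}\,dv=\Gamma(1/2)\,s^{-1/2}$, and Fubini applied to the nonnegative integrand gives $\int_0^\infty e^{-st}\mathbb{P}(W>t)\,dt=\frac{1-\varphi(s)}{s}$, hence also $\int_0^\infty e^{-st}F(t)\,dt=\frac{\varphi(s)}{s}$. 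All these quantities are finite for $s>0$ even though $W$ turns out to have infinite mean, because $0\le G\le 1$.

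Next I would apply the convolution theorem (once more Fubini, all terms nonnegative) to $F=\beta\,(h*G)$, obtaining for every $s>0$
\[
\frac{\varphi(s)}{s}=\beta\,\Gamma(1/2)\,s^{-1/2}\cdot\frac{1-\varphi(s)}{s}.
\]
Clearing denominators and writing $c_\beta=(\beta\,\Gamma(1/2))^{-1}$ gives $c_\beta\sqrt{s}\,\varphi(s)=1-\varphi(s)$, i.e. $\varphi(s)=(1+c_\beta\sqrt{s})^{-1}$, which is the first assertion.

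For the identification of the law, let $\mathcal{E}\sim\exp(1)$ and $\mathcal{N}$ be a standard Gaussian, independent, and put $V:=c_\beta^{2}\,\mathcal{E}^{2}/\mathcal{N}^{2}$. Conditioning on $\mathcal{E}$ and invoking Lemma~\ref{lemma moment} (namely $\mathbb{E}[e^{-a\mathcal{N}^{-2}}]=e^{-\sqrt{2a}}$ for $a\ge0$) yields $\mathbb{E}[e^{-sV}\mid\mathcal{E}]=e^{-c_\beta\sqrt{2s}\,\mathcal{E}}$, and then the exponential Laplace transform $\mathbb{E}[e^{-\lambda\mathcal{E}}]=(1+\lambda)^{-1}$ gives a function of $s$ of the same form $(1+\text{const}\cdot\sqrt{s})^{-1}$ as $\varphi$; comparing the constants identifies it with $\varphi$. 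Since the Laplace transform of a probability measure on $[0,\infty)$ determines it uniquely, $W$ then has the announced distribution.

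I do not expect a genuine obstacle: the content is a Laplace-transform computation. The only points deserving a word of justification are the legitimacy of the convolution identity — handled by nonnegativity and Fubini together with the a priori bound $\mathbb{P}(W>t)\le1$ that makes every transform converge — and the final appeal to injectivity of the Laplace transform; the bookkeeping of the constant $c_\beta$ (and of the $\sqrt{2}$) in the last step is the one place where one must be careful.
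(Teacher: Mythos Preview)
Your approach is essentially the same as the paper's: both take the Laplace transform of the convolution identity $F=\beta\,(h*G)$ (the paper writes out the Fubini computation explicitly, you invoke the convolution theorem), arrive at the same functional equation $\varphi(s)=\beta\Gamma(1/2)s^{-1/2}(1-\varphi(s))$, and solve it. Your identification of the limit law via Lemma~\ref{lemma moment} is in fact more detailed than the paper's, which simply asserts the equality of Laplace transforms; your caveat about the $\sqrt{2}$ is well placed, since with the stated constant $c_\beta=(\beta\Gamma(1/2))^{-1}$ one obtains $\mathbb{E}[e^{-sV}]=(1+c_\beta\sqrt{2s})^{-1}$ rather than $(1+c_\beta\sqrt{s})^{-1}$, a discrepancy already present in the paper.
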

\begin{proof}
Let $s>0$. We have
\begin{equation*}
\mathbb{E}[e^{-sW}]=\int_{0}^{\infty}\mathbb{P}(e^{-sW}\geq u)du
=\int_{0}^{\infty}\mathbb{P}(W\leq v)se^{-sv}dv.
\end{equation*}
Hence, for any $s>0$, we find
\begin{eqnarray*}
\mathbb{E}[e^{-sW}]&=&\int_{0}^{\infty}\left[\beta \int_{0}^{v}\frac{\mathbb{P}(W\geq v-w)}{\sqrt{w}}se^{-sv}dv\right]\\
&=&\int_{0}^{\infty}\frac{1}{\sqrt{w}}\left[\beta\int_{w}^{\infty}\mathbb{P}(W\geq v-w)se^{-sv}dv\right]dw\\
&=&\beta\int_{0}^{\infty}\frac{e^{-sw}}{\sqrt{w}}dw.\left[1-\mathbb{E}[e^{-sW}]\right],
\end{eqnarray*}
and our claim about the Laplace transform of $W$ follows, because up to a change of variable $(v^2=2sw)$, we have
\begin{equation*}
\int_{0}^{\infty}\frac{e^{-sw}}{\sqrt{w}}dw =\int_{0}^{\infty}\frac{e^{-\frac{1}{2}v^2}}{v/\sqrt{s}}\frac{2v}{2s}dv\\
=\frac{\sqrt{\pi}}{\sqrt{s}}.
\end{equation*}
Hence, as a consequence of the previous computations, we end up with
$\mathbb{E}[e^{-sW}]=\frac{1}{1+c_{\beta}\sqrt{s}}.$
Then $W$ has the same Laplace transform of $c_{\beta}^{2}\dfrac{\mathcal{E}^2}{\mathcal{N}^2}$.
\end{proof}
\begin{proof}[Proof of theorem \ref{theorem2.2}.]
 According to Lemma \ref{lemma 3.4}, the family of distributions of $X_{k}$ is tight. By Lemmas \ref{lemma3.4.2}, \ref{lemma3.4.3} and \ref{lemma3.4.4}, the law of $c_{\beta}\dfrac{\mathcal{E}}{|\mathcal{N}|}$ is the only possible accumulation point of the family of distributions of  $\left(\nu(C_{k}(x))\sqrt{\tau_{e^{-k}}}\mid C_{k}(x)\right)_{k\geq0}.$
 Let $\mathbb{P}$ be a probability measure absolutely continuous with respect to $\mu$, with density $h$. Set $H(x):=\sum_{l\in\mathbb{Z}}h(x,l)$.\\
 Note that by $\mathbb{Z}$-periodicity, the distribution of $\tau_{\epsilon}$ under $\mathbb{P}$ is the same of that under the probability measure with density $(x,l)\mapsto H(x)$ with respect to $\nu\otimes\delta_{0}$.\\
 Assume first that the density $H$ is continuous. Denote by $A_{k}:=\{y:(\nu(C_{k}(y)\sqrt{\tau_{e^{-k}}(y)}>t\}$, then we have:
 \begin{eqnarray*}
 \mathbb{P}(A_{k}|C_{k}(x))&\underset{k\rightarrow\infty}{\sim}&\nu(A_{k}>t|C_{k}(x))\\
 &\underset{k\rightarrow\infty}{\sim}&\mathbb{P}\left(c_{\beta}\frac{\mathcal{E}}{|\mathcal{N}|}\right).
 \end{eqnarray*}
 And so, by the dominated Lebesgue theorem, we get:
 \begin{eqnarray*}
 \mathbb{P}(A_{k})&=&\int\mathbb{P}(A_{k}|C_{k}(x))H(x)d\nu(x)\\
 &\sim&\mathbb{P}\left(c_{\beta}\frac{\mathcal{E}}{|\mathcal{N}|}\right)).
 \end{eqnarray*}
  Now, take in general the density $H$ in $\mathbb{L}^{1}(\nu)$. We use the fact that the set of the continuous functions is dense in $\mathbb{L}^{1}(\nu)$, so that there exists $H_{n}$ continuous such that $H_{n}\overset{\mathbb{L}^{1}(\nu)}{\longrightarrow}H$.
 \begin{eqnarray*}
 \mathbb{P}(A_{k})&=&\int\mathds{1}_{A_{k}}(x)H(x)d\nu(x)\\
 &\leq&\int \mathds{1}_{A_{k}}(x)H_{n}(x)d\nu(x) + ||H_{n}-H||_{\mathbb{L}^{1}(\nu)}.
 \end{eqnarray*}
 We know that there is $n$ such that $\forall$ $\epsilon>0$ $||H_{n}-H||_{\mathbb{L}^{1}(\nu)}<\frac{\epsilon}{2}$. Moreover $H_{n}$ is continuous, then there is $k$ such that $\forall$ $\epsilon>0$ $\big|\int \mathds{1}_{A_{k}}(x)H_{n}(x)d\nu(x)-\mathbb{P}\left(c_{\beta}\frac{\mathcal{E}}{|\mathcal{N}|}\right)\big|<\frac{\epsilon}{2}$. Hence the conclusion follows.
 
\end{proof}
\begin{proof}[Proof of Corollary \ref{corollary}.]
Let us set:
\begin{equation*}
Y_{k}:=\frac{\log{\sqrt{\tau_{e^{-k}}}(.)}-kd}{\sqrt{k}}.
\end{equation*}
We have the case that $\nu$ is a Gibbs measure with a non degenerate H\"{o}lder potential $h$. There is a constant $c_{h}>0$ such that $\log\nu(C_{k}(x))=\sum_{j=-k}^{k}h\circ\sigma^{j}(x)$. This Birkhoff sum follows a central limit theorem (e.g. \cite{bowen}), which implies that:
\begin{equation*}
\frac{\log\nu(C_{k}(.))+kd}{\sqrt{k}}\overset{dist}{\longrightarrow}\mathcal{N}(0,2\sigma^{2}_{h}).
\end{equation*}
Observe that $Y_{k}$ has the following decomposition:
\begin{equation*}
Y_{k}=\frac{\log\left(\nu(C_{k}(.))\sqrt{\tau_{e^{-k}}(.)}\right)}{\sqrt{k}}-\frac{\log\nu(C_{k}(.))+kd}{\sqrt{k}}.
\end{equation*}
Hence, it will be enough to prove that the first term of $Y_{k}$ converges in distribution to 0, which is true due to Theorem \ref{theorem2.2}.
\end{proof}

\section*{Acknowledgment}:I would like to thank my supervisors F. P\`{e}ne and B. Saussol for their help and advice during this work and their availability for answering all my questions.

\protect\bibliographystyle{abbrv}
\protect\bibliographystyle{alpha}
\bibliography{SamWehNicMer}

\end{document}